\newcommand{\Z}{\mathbb{Z}}
\newcommand{\C}{\mathcal{C}}
\newcommand{\D}{\mathbb{D}}
\newcommand{\Q}{\mathbb{Q}}
\newcommand{\N}{\mathbb{N}}
\newcommand{\ff}{\mathbb{F}}
\newcommand{\MG}{\mathcal{M}(G)}
\newcommand{\Prep}{\mathcal{P}}
\newcommand{\sk}{\smallskip}
\newcommand{\Sym}{\mathbb{S}}
\newcommand{\congP}{\sim_{\mathcal{P}}}
\newcommand{\Di}{\mathbb{D}}
\newtheorem{thm}{Theorem}[section]
\newtheorem{prop}[thm]{Proposition}
\newtheorem{lema}[thm]{Lemma}
\newtheorem{coro}[thm]{Corollary}
\theoremstyle{definition}
\newtheorem{rem}[thm]{Remark}
\newtheorem{exam}[thm]{Example}
\theoremstyle{remark}
\theoremstyle{definition}
\newtheorem{defi}[thm]{Definition}
\begin{document}
\numberwithin{equation}{section}
\title{Invariant metrics on finite groups}
\author{Ricardo A.\@ Podest\'a, Maximiliano G.\@ Vides}
\dedicatory{\today} 
\keywords{Invariant metric, weight function, finite groups, symmetry groups}
\thanks{2010 {\it Mathematics Subject Classification.} Primary 05E16;\, Secondary 54E45, 05E18, 20B05, 20B35.}
\thanks{Partially supported by CONICET, FONCyT and SECyT-UNC}

\address{Ricardo A.\@ Podest\'a, FaMAF -- CIEM (CONICET), Universidad Nacional de C\'ordoba, \newline
 Av.\@ Medina Allende 2144, Ciudad Universitaria, (5000) C\'ordoba, Rep\'ublica Argentina. \newline
{\it E-mail: podesta@famaf.unc.edu.ar}}

\address{Maximiliano G.\@ Vides 
FaMAF -- CIEM (CONICET), Universidad Nacional de C\'ordoba, \newline
 Av.\@ Medina Allende 2144, Ciudad Universitaria, (5000)   C\' ordoba, Rep\'ublica Argentina. \newline
{\it E-mail: mvides@famaf.unc.edu.ar}}

\begin{abstract}
We study invariant and bi-invariant metrics on groups focusing on finite groups $G$. We show that non-equivalent (bi) invariant metrics on $G$ are in 1-1 correspondence with unitary symmetric (conjugate) partitions on $G$. To every metric group $(G,d)$ we associate to it the symmetry group and the weighted graph of distances. Using these objects we can classify all equivalence classes of invariant and bi-invariant metrics for small groups. 
We then study the number of non-equivalent invariant and bi-invariant metrics on $G$. We give an expression for the number of such metrics in terms of Bell numbers, with closed expressions for certain groups such as abelian, dihedral, quasidihedral and dicyclic groups. We then characterize all the groups (finite or not) in which every invariant metric is also bi-invariant.
We give the number of non-equivalent invariant and bi-invariant metrics for all the groups of order up to 32.
\end{abstract}

\maketitle

\section{Introduction}
In this work we study invariant metrics on groups, focusing on the finite case. So, from now on $G$ will always denote a finite group (unless explicit mention of the contrary), $d$ a metric 
on $G$ and $w$ a weight function on $G$. We now recall these definitions.

A \textit{metric} on a set $X$ is a function $d : X \times X \rightarrow\mathbb{R}_{\geq 0}$ such that for every $x,y,z\in X$  satisfies the three conditions:
\begin{enumerate}[($a$)]
\setlength{\itemsep}{1mm}
\item $d(x,y)\geq 0$ and $d(x,y)=0 \,\Leftrightarrow\, x=y$ (positiveness), 
\item $d(x,y)=d(y,x)$ (commutativity), 
\item $d(x,y)\leq d(x,z) + d(x,z)$ (triangle inequality).
\end{enumerate}
One says that $(X,d)$ is a \textit{metric space}. If $d$ only satisfies ($a$)-($b$), $d$ is a \textit{semimetric} and $(X,d)$ is a \textit{semimetric space}. If $X=G$ is a group, then we will say that $(G,d)$ is a \textit{metric group}, or a \textit{metric semigroup} if condition ($c$) does not hold. 
%Further, if $G$ is abelian we will speak of \textit{abelian metric group} (or \textit{abelian semimetric group}).

A \textit{weight} on $G$ is a function $w : G \rightarrow \mathbb{R}_{\ge 0}$ such that for every $x,y\in G$ satisfies:
\begin{enumerate}[($a$)]
\setlength{\itemsep}{1mm}
  \item $w(x) \geq 0$ (positiveness) and $w(x)=0$ if and only if  $x=e$,
  \item $w(x)=w(x^{-1})$ (symmetry),
  \item $w(xy) \leq w(x) + w(y)$ (triangle inequality).
	\end{enumerate}
In general, with minimum extra conditions, given a metric one can define a weight and conversely. 

The metric $d$ is called \textit{integral} if only takes integer values, that is $d : X \times X \rightarrow \N_{0}$ (equivalently, the associated weight function takes integer values). 
Since our main motivation to study metrics on groups are applications to combinatorics (coding theory, for instance), we will be mainly interested in integral metrics. 

The most common metrics used in coding theory are the Hamming distance for codes over fields and the Lee distance for codes over rings (see the seminal works of Lee \cite{Lee}, Nechaev \cite{Ne} and Hammons et al \cite{HKCSS}), which are integral invariant  metrics. 
Brualdi et al \cite{Brualdi} introduced invariant metrics associated to a partially ordered set, the so called poset metrics. For instance, the Hamming metric is a poset metric for the antichain poset. The Rosembloom-Tsfasman metric or RT-metric \cite{RT-metric} which is also widely used in coding theory is a particular case of poset metric \cite{Brualdi}. Another important invariant metric is the one given by the homogeneous weight on rings \cite{Gre} (when this weight indeed defines a metric). We note that poset metrics are also integral metrics and the homogeneous metrics can be rescaled to become integral.

We are interested in metrics defined over groups. A general family of such metrics is given by word metrics.
A complete treatment of metrics on groups can be found in Chapter 10 of the Encyclopedia of distances \cite{Deza}.
Our approach to study metrics on groups will be basically through (unordered) partitions. 
In \cite{Ba95} the author considered metrics with ordered partitions, however metrics obtained from different orders of the same partition are all $\Gamma$-equivalent (see the comments after \eqref{SGd}) to each other.
A relation between metrics and partitions over additive codes is given in \cite{Gluesing}, mainly for the study of duality and the MacWilliams identities.

\subsubsection*{Outline and results}
We now summarize the main results in the paper. 
In Section \ref{S2} we study invariant metrics on an arbitrary group in terms of its symmetric partitions. 
In Section \ref{S3}, for a finite metric group %$(G,d)$ 
we study their symmetry group and the associated graph of distances. In Section \ref{S4} we give a detailed study of the invariant metrics for small groups. 
In Sections \ref{S5} and \ref{S5b} we count the number of invariant and bi-invariant metrics on a finite group.  In the final section we give the number of invariant and bi-invariant metrics for all the groups of order up to $32$.

More precisely, in Section \ref{S2} we define right-invariant 
%(also left-invariant and bi-invariant) 
metrics on a group $G$ and denote by 
$\mathcal{M}(G)$ the set of all these metrics. The weight $w$ associated to an invariant metric $d$ induces a partition $P(G,d)$ on  $G$ (see \eqref{eq part}). 
We denote by $\mathcal{P}(G)$ the set of all unitary symmetric partitions on $G$ (see the definition before \eqref{PG}). Two metrics are $\mathcal{P}$-equivalent if they define the same unitary symmetric partitions on $G$. In Proposition \ref{prop 1-1} we show that the set $\mathcal{M}(G)/_{\sim \mathcal{P}}$ of non-equivalent invariant metrics on $G$ are in 1-1 correspondence with $\mathcal{P}(G)$.
Also, the non-equivalent bi-invariant (left and right) metrics on $G$ are in a 1-1 correspondence with unitary symmetric conjugate partitions of $G$.

In the next section, given a metric group $(G,d)$, we study the symmetry group $\Gamma(G,d)$ of permutations of $G$ preserving the metric $d$ and its relation with its distance graph. In general, it is difficult to determine the group $\Gamma(G,d)$, however we can give some set contentions. 
In Proposition~\ref{Partsimetrica} we show that if $G$ is abelian then we have $\D(G) \le \Gamma(G,d) \le \Sym_G$, where $\D(G)$ is a generalization of the dihedral group (see Definition \ref{Dihedral}). For a general group, if $d$ is bi-invariant, in Proposition~\ref{Gbiinv} we get that $G \rtimes \operatorname{Aut}_{ecp}(G) \le \Gamma(G,d) \le \Sym_G$, 
where $\operatorname{Aut}_{ecp}(G)$ is the subgroup of extended class preserving automorphisms of $G$ (see the comments before \eqref{automorphisms}). 
Finally, in Theorem~\ref{capcays} we show that the symmetry group of $(G,d)$ can be put in terms of certain Cayley graphs associated to the partition. Namely, if $P_0,P_1,\ldots,P_s$ is the unitary symmetric partition of $G$ associated to $d$, then 
$\Gamma(G,s)$ is the intersection of all the Cayley graphs $Cay(G,P_i)$.

In Section \ref{S4}, using the results of Sections 2 and 3, we study the invariant and bi-invariant metrics in detail for small groups. For each group $G$ of order up to 7 we give all their unitary symmetric (conjugate) partitions $\mathcal{P}(G)$. For the equivalence class of metrics $d$ associated to each partition in $\mathcal{P}(G)$ we give the symmetry group $\Gamma(G,d)$ and the graphs of distances $\mathcal{G}(G,d)$. Also, we indicate which of these metrics is a poset metric \cite{Brualdi}, a chain metric or an extended Lee metric --both defined in \cite{PV}-- or an homogeneous metric on rings \cite{Gre}. 

In Section \ref{S5} we study the number of invariant metrics on a finite group $G$. 
In Proposition~\ref{CardMG} we give an expression for the number $M(G)$ of non-equivalent invariant metrics on $G$ in terms of Bell numbers $B_{k(G)}$ where $k(G)$ is certain %non-negative 
integer associated to $G$. This allows us to compute the number $M(G)$ in closed form for abelian groups $G$ in Proposition~\ref{kGab} and for $G$ being dihedral groups $\D_n$, dicyclic groups $\Q_{4n}$ and quasidihedral groups $Q\D_n^\pm$ in Proposition \ref{prop Ms}. In Proposition \ref{Sn An} we give an expression for the number of invariant metrics for symmetric and alternating groups $\Sym_n$, $\mathbb{A}_n$. We then study the number of invariant metrics for some semidirect products $G\rtimes_\varphi H$ in some particular cases (see \eqref{kGsxH bounds}--\eqref{DG}).

In Section \ref{S5b} we study the number of invariant metrics on a group $G$. 
In Proposition \ref{No bi-inv} we compute the number $M^*(G)$ of bi-invariant metrics of a group $G$ in terms of the number of its conjugacy and real conjugacy classes. In Proposition \ref{prop M*s} we give the precise number of bi-invariant metrics for dihedral, dicyclic and quasidihedral groups. The number of bi-invariant metrics of symmetric groups is considered in Example \ref{ambi3}. In Example~\ref{SL2q} we give the number of invariant and bi-invariant metrics of the special group 
$\mathrm{SL}_2(\ff_q)$ of $2 \times 2$ matrices over a finite field. 
Finally, in Theorem \ref{char bi} --one of the main results-- we characterize all groups $G$ (finite or not) in which every invariant metric is also a bi-invariant one. We prove that either $G$ is abelian or $G=\Q_8 \times H$ where $\Q_8$ is the quaternion group and $H$ is an abelian elementary $2$-group. In particular, if $G$ is finite it must be of the form $G=\Q_8 \times \Z_2^k$ for some $k \in \Z_{\ge 0}$.

Finally, in Section \ref{S6}, by using the results of Section \ref{S5} we count the number of invariant and bi-invariant metrics for  the 144 groups of order up to 32 (see Theorem \ref{allmetrics} and Tables \ref{tablita}--\ref{tablita5}).

\section{Invariant metrics and $\mathcal{P}$-equivalence}  \label{S2}
From now on we assume that $(G,d)$ is a metric group. 
The metric $d$ is called \textit{right translation invariant} or \textit{left translation invariant} if for any $g,g',h$ in $G$ we respectively have 
\begin{equation} \label{invariance}
d(gh,g'h)=d(g,g') \qquad \text{or} \qquad d(hg,hg')=d(g,g').
\end{equation}
If $G$ is abelian both notions coincide and $d$ is called \textit{translation invariant} (or \textit{bi-invariant}).   

Given a metric group $(G,d)$ we have the induced weight function $w : G\rightarrow \mathbb{R}_{\ge 0} $ defined by  
$w(x)=d(x,e)$ for any $x \in G$ where $e$ is the identity element of $G$. 
Conversely, if $(G,w)$ is a weight space, one can define a right (resp.\@ left) translation invariant metric $d$ on $G$ by 
\begin{equation} \label{inv w}
d(x,y) = w(xy^{-1})
\end{equation}
(resp.\@ $d(x,y) = w(y^{-1}x$) for every $x,y \in G$ provided that $w(x^{-1})=w(x)$ for every 
$x \in G$, which are denoted $d(x,y) = w(x-y)$ (resp.\@ $d(x,y) = w(y-x)$) and $w(-x)=w(x)$ if $G$ is abelian. 

It is equivalent to study right-invariant metrics or left-invariant metrics on non-abelian groups. To fix ideas and notations we will always assume that the metrics are right-invariant.

Let $(G,d)$ be a metric group and let $\Sym_{G}$ be the permutation group of $G$.  
Note that $G$ can be seen as a subgroup of $\Sym_G$ by identifying $g\in G$ with $\sigma_g \in \Sym_G$ where $\sigma_g(x) = xg$. We denote by 
$G_R$ the group $G$ seen as the group of right translations inside $\Sym_G$.
\begin{defi} \label{def sym grp}
We say that $(G,d)$ is \textit{$\sigma$-invariant} for $\sigma \in \Sym_G$, denoted $d^\sigma=d$, if 
	$$ d^\sigma(x,y) := d(\sigma (x),\sigma (y)) = d(x,y)$$ 
for all $x, y \in G$.
The \textit{symmetry group} of $(G,d)$ is defined by
$\Gamma(G,d) = \{\sigma \in \Sym_{G}  :  d^\sigma =d \}$.
\end{defi}

Clearly, to say that $(G,d)$ is \textit{$\sigma$-invariant} for every $\sigma \in G_R$ is equivalent to say that $d$ is right translation invariant.
From now on $(G,d)$ will denote a metric group where $d$ is a right translation invariant metric. 
We will use the following set
\begin{equation} \label{MG}
\begin{split}
\mathcal{M}(G) =\{ \text{right-invariant metrics on $G$}\}. 
\end{split}
\end{equation}
Note that $\mathcal{M}(G)$ is infinite; for example, given a metric $d$ on $G$ and $\alpha > 0$, we have that
$d_\alpha = \alpha d \in \MG$, where $d_\alpha$ is defined by 
$d_\alpha (x,y) = \alpha  d(x,y)$
for all $x,y \in G$. Similarly, given $d_1,d_2\in \MG $ then $d_1+d_2\in\MG$, where
$(d_1+d_2 )(x,y)=d_1(x,y)+d_2(x,y)$
for all $x,y \in G$. The addition of metrics is associative and hence $\mathcal{M}(G)$ is a semigroup. 
It is worth noticing that in general the product of metrics 
$(d_1 \cdot d_2 )(x,y) = d_1(x,y) d_2(x,y)$
is not a metric, although it is a semimetric, that is a function that have all the properties of a metric but where the triangle inequality does not necessarily holds.

\subsubsection*{Partitions and $\mathcal{P}$-equivalence of metrics}
Let $(G,d)$ be a metric group (recall that $d$ is right translation invariant) with associated weight function $w$. The \textit{induced partition} of $(G,d)$, denoted $\mathcal{P}=P(G,d)$, is the partition of $G$ determined by the equivalence relation
\begin{equation} \label{eq part}
	g \sim h \quad \Leftrightarrow \quad w(g)=w(h) \quad \Leftrightarrow \quad d(g,e)=d(h,e) 
\end{equation} 
for any $g,h \in G$.

In general we will write $P(G,d)=\{P_i\}_{i\in\mathcal{I}}$ for a partition of $G$, with $0\in \mathcal{I}$, and thus we have  
$P_i=\{g\in G : w(g)=w_i\}$
for $i\in \mathcal{I}$ and some sequence $\{w_i\}_{i\in \mathcal{I}}$ of different non-negative real numbers. We also write $w(P_i)=w_i$ for  $i\in \mathcal{I}$.	
If  the partition $P(G,d)$ is finite, in particular for finite groups, we will denote by $P_0, \ldots, P_s$ the parts of $P(G,d)$, that is $\mathcal{P}=\{P_0,\ldots,P_s\}$. Thus, if $w_1, \ldots, w_s$ are the different nonzero weights of $G$ (i.e.\@ the different real values that $w(g)$ can take for $g\in G$) we will say that $G$ is an 
\textit{$s$-weight} metric group. 

We will say that $\mathcal{P}$ is \textit{unitary} if $\{ e \} \in \mathcal{P}$, in which case we assume that $P_0=\{e\}$, and that it is \textit{symmetric} if $P_i=P_i^{-1}$, i.e.\@ $g \in P_i$ if and only if $g^{-1} \in P_i$, for all $i\in\mathcal{I}$. 
We will use the following set  
\begin{equation} \label{PG}
\Prep(G) =\{ \text{unitary symmetric partitions of $G$}\}.
\end{equation}

We now show the basic fact that any partition induced by a metric on a group is unitary and symmetric.
\begin{lema} \label{USP}
If $(G,d)$ is a metric space then $\mathcal{P}=P(G,d)$ is unitary and symmetric.
\end{lema}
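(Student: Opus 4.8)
The plan is to verify the two requirements — unitarity and symmetry — directly against the definition of the induced partition, which is governed by the weight $w(g)=d(g,e)$ and the relation $g\sim h \iff w(g)=w(h)$. Since $d$ is right translation invariant on $G$, both checks reduce to elementary manipulations of the metric axioms.

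First I would establish unitarity. The part of $\mathcal P$ containing $e$ is $P_0=\{g\in G : w(g)=w(e)\}=\{g\in G : d(g,e)=0\}$, and by the positiveness axiom ($a$) of the metric, $d(g,e)=0$ holds if and only if $g=e$. Hence $P_0=\{e\}$, so $\{e\}\in\mathcal P$ and $\mathcal P$ is unitary. This is the only place the ``$=0 \Leftrightarrow$ equal'' half of axiom ($a$) is needed.

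Next, for symmetry I would show that $w(g^{-1})=w(g)$ for every $g\in G$, which at once forces $P_i=P_i^{-1}$ for each $i$: if $w(g)=w_i$ then $w(g^{-1})=w_i$, so $g\in P_i$ iff $g^{-1}\in P_i$. To prove $w(g^{-1})=w(g)$ I would apply right translation invariance \eqref{invariance} to the pair $(g^{-1},e)$ using the translation by $g$, and then use commutativity ($b$) of $d$:
\[
 w(g^{-1}) = d(g^{-1},e) = d(g^{-1}g,\, eg) = d(e,g) = d(g,e) = w(g).
\]
Combining the two parts gives the lemma.

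I expect no genuine obstacle here; the argument is routine. The only point requiring a word of care is that the hypothesis ``$(G,d)$ is a metric space'' is to be read in the running sense of this section — namely that $d$ is a right translation invariant metric on the group $G$ — since without the invariance hypothesis the symmetry of the partition can fail. It is also worth noting that the proof uses only axioms ($a$)–($b$) of $d$ together with right invariance, the triangle inequality playing no role, so the conclusion holds verbatim for invariant semimetrics as well.
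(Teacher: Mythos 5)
Your proof is correct and follows essentially the same route as the paper: unitarity comes straight from the positiveness axiom, and symmetry from the chain $w(g^{-1})=d(g^{-1},e)=d(e,g)=d(g,e)=w(g)$, which is the paper's computation written with the right-invariance step made explicit. Your closing remarks (that right invariance is genuinely needed and that the triangle inequality plays no role) are accurate but not required.
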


\begin{proof}
	Clearly we have that $d(x,y)=0$ if and only if $x=y$, then $w(x)=0$ if and only if
	$x=e$, where $e$ is the identity of $G$. Thus, $\{e\} \in \mathcal{P}$, that is, $\mathcal{P}$ is unitary.
	Now, we have that 
	$w(x) = d(x,e) = d(e,x) = w(x^{-1})$ 
	for all $x\in G$ and hence $P$ is also symmetric.
\end{proof}

The lemma implies that we have a well defined map 
\begin{equation} \label{Particioninducida}
\pi_G  :  \mathcal{M}(G) 
\longrightarrow  \Prep(G), \qquad  d \: \mapsto \:  P(G,d). 
\end{equation}

\begin{defi} \label{Pequivalent}
We say that two metrics $d_1, d_2$ on $G$ are $\Prep$-\textit{equivalent}, and we denote it by $d_1 \sim_{\Prep} d_2$, 
if they induce the same unitary symmetric partition of $G$. In symbols, 
	\begin{equation} \label{Partitionrelation}
	\qquad d_1 \sim_{\Prep} d_2 \quad \Leftrightarrow \quad P(G,d_1) = P(G,d_2).
	\end{equation}
We will denote by $\widetilde{\mathcal{M}}(G) = \mathcal{M}(G)/_{\sim_{\Prep}}$
the \textit{space of $\Prep$-equivalence classes of metrics} on $G$.
\end{defi}

Obviously, a metric $d$ is $\Prep$-equivalent to any scalar multiple $d_\alpha=\alpha d$ of $d$ with
$\alpha \in \mathbb{R}_{>0}$. However, if $d_1\congP d_1^{\prime}$ and $d_2\congP d_2^{\prime}$ it is not true in general 
that $d_1+d_2\congP d_1^{\prime}+d_2^{\prime}$.
For example, consider the following two metrics on $G=\Z_4$, the Lee metric $d_{Lee}$ given by $w_{Lee}(0)=0$, $w_{Lee}(1)=w_{Lee}(3)=1$ and 
$w_{Lee}(2)=2$, and the metric $d$ induced by the weight function $w(0)=0$, $w(2)=1$ and $w(1)=w(3)=2$.
It is clear that $d_{Lee} \congP d$. 
On the other hand, 
$d_{Lee}+d_{Lee} = 2d_{Lee}\congP d_{Lee}$ and $d_{Lee}+d = 3d_{Ham}\congP d_{Ham}$. 
Since $d_{Lee}\not\congP d_{Ham}$ we have that $d_{Lee}+d_{Lee} \not\congP d_{Lee}+d$.
This shows that it could be not so easy to attach a group structure to the space of equivalent metrics of $G$. 
We pose the question: 
\textit{Which metric, topological or algebraic structure can be given to $\mathcal{M}(G)/_{\sim_{\Prep}}$?}

\goodbreak

The map $\pi_G$ is clearly not injective. We now show that $\pi_G$ in \eqref{Particioninducida} is surjective. 
\begin{lema} \label{surjectivity}
If $\mathcal{P}$ is a unitary symmetric partition of $G$ with $|\mathcal{P}|\leq |\mathbb{R}|$, then there exists a right-invariant metric $d\in\mathcal{M}(G)$ such that $P(G,d)=\mathcal{P}$. Moreover, if $|\mathcal{P}|$ is finite, then $d$ can be taken to be integral.
\end{lema}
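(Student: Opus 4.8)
The plan is to build $d$ by hand from a suitably chosen weight function, using the freedom we have in assigning numerical values to the parts of $\mathcal{P}$. Write $\mathcal{P}=\{P_0\}\cup\{P_i\}_{i\in\mathcal{I}}$ with $P_0=\{e\}$ (legitimate since $\mathcal{P}$ is unitary) and $|\mathcal{I}|=|\mathcal{P}|-1\le|\mathbb{R}|$. The crucial point is to force all nonzero weights to lie in the interval $[1,2]$: since $|\mathcal{I}|\le|\mathbb{R}|=|[1,2]|$, fix an injection $\mathcal{I}\to[1,2]$, $i\mapsto w_i$, and define $w:G\to\mathbb{R}_{\ge 0}$ by $w(e)=0$ and $w(g)=w_i$ whenever $g\in P_i$. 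In the case where $|\mathcal{P}|$ is finite, say $\mathcal{P}=\{P_0,P_1,\dots,P_s\}$, take instead $w_1,\dots,w_s$ to be the $s$ distinct integers $s+1,s+2,\dots,2s$, so that $w$ becomes integer-valued; this is the only modification needed for the ``moreover'' part.

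Next I would check that $w$ is a weight function in the sense of the Introduction. Positivity and $w(g)=0\Leftrightarrow g=e$ are immediate from $P_0=\{e\}$ and from the $w_i$ being strictly positive. The symmetry $w(g)=w(g^{-1})$ holds because $\mathcal{P}$ is a symmetric partition: $g$ and $g^{-1}$ lie in the same part $P_i$ and therefore receive the same value $w_i$. The triangle inequality $w(gh)\le w(g)+w(h)$ is exactly where the choice of interval is used: if $g=e$ or $h=e$ it is an equality; otherwise $w(g)\ge 1$ and $w(h)\ge 1$ (resp.\ $\ge s+1$ in the finite case), so $w(g)+w(h)\ge 2$ (resp.\ $\ge 2s+2$), while every value of $w$ is at most $2$ (resp.\ at most $2s$), whence $w(g)+w(h)\ge w(gh)$.

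Finally I would set $d(x,y)=w(xy^{-1})$ as in \eqref{inv w} and dispatch the remaining, routine verifications: $d$ is right-invariant since $w\big((xh)(yh)^{-1}\big)=w(xy^{-1})$; $d$ is a metric because $w(xy^{-1})=0\Leftrightarrow x=y$, because $w$ is symmetric, and because $w(xz^{-1})=w\big((xy^{-1})(yz^{-1})\big)\le w(xy^{-1})+w(yz^{-1})$; and $d$ is integral precisely when $w$ is, i.e.\ in the finite case. To conclude that $d$ realizes $\mathcal{P}$, note that, the $w_i$ being pairwise distinct, two elements $g,h$ satisfy $w(g)=w(h)$ if and only if they lie in the same part of $\mathcal{P}$, so the partition $P(G,d)$ defined by \eqref{eq part} equals $\mathcal{P}$.

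I do not expect any real obstacle: the entire argument rests on the single device of compressing the nonzero weights into $[1,2]$ (or into $\{s+1,\dots,2s\}$ for the integral statement), which renders the triangle inequality automatic; everything else is bookkeeping already present in Lemma~\ref{USP} and in the discussion around \eqref{inv w}.
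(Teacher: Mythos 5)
Your proof is correct and follows essentially the same route as the paper: assign pairwise distinct weights from an interval where the maximum is at most twice the minimum, so the triangle inequality holds automatically, and then verify that $d(x,y)=w(xy^{-1})$ is a right-invariant metric inducing $\mathcal{P}$. The only (harmless) difference is in the integral case, where you pick the integers $s+1,\dots,2s$ directly, whereas the paper picks rationals in $[1,2]$ and clears denominators.
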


\begin{proof}
Let $\mathcal{P}=\{P_i\}_{i\in\mathcal{I}}$ with $P_0=\{e\}$. Consider a sequence of real numbers $\{ a_i \}_{i\in \mathcal{I}} \subset [1,2]$ all mutually different, and define the following weight function on $G$:
\begin{equation} \label{ais}
w(x) = \begin{cases}
0,  & \quad \text{if } \ x=e, \\[.75mm]
a_i, & \quad \text{if } x\neq e,\ x\in P_i ,\; i\in\mathcal{I} \smallsetminus \{0\}. 
\end{cases}
\end{equation}
This defines the right-invariant metric $d$ given by
$d(x,y)=w(xy^{-1})$.
Since $\mathcal{P}$ is unitary, then $d(x,y)=0$ if and only if $x=y$,
and by symmetry of the partition we have that $d(x,y)=w(xy^{-1})=w(yx^{-1})=d(y,x)$.
Also $d(x,y)\geq 0$, for all $x,y\in G$, and $d$ satisfies the triangle inequality since
$ d(x,y)\leq 2\leq d(x,z)+d(z,y)$
for all $x,y, z\in G$ with $z\neq x,y$ (in the cases that $z=x$ or $z=y$ the inequality is trivial).

Finally, we have that $P_0(G,d)=\{ e \} = P_0$ and $P_i(G,d) = \{ x\in G : w(x)=a_i \}=P_i$ for any $i\in\mathcal{I},i\neq 0$, by construction.
Moreover, if $\mathcal{I}$ is finite, the numbers $a_i$ can be chosen to be rational, and hence multiplying them by a suitable integer one can obtain a new weight $w_\N$ function having the same partition as before but taking only integer values.
\end{proof}

We have the following useful relation between invariant metrics and symmetric partitions.
\begin{prop} \label{prop 1-1}
Let $(G,d)$ be a metric group. There is a 1--1 correspondence 
\begin{equation} \label{correspondencia}
 \faktor{\mathcal{M}(G)}{{\sim_\mathcal{P}}} \quad \longleftrightarrow \quad \Prep(G)
\end{equation}
between $\mathcal{P}$-classes of right-invariant metrics of $G$ and unitary symmetric partitions $P$ of $G$ with $|P| \le |\mathbb{R}|$.
\end{prop}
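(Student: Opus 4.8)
The plan is to show that the map $\pi_G : \mathcal{M}(G) \to \Prep(G)$ from \eqref{Particioninducida} descends to a well-defined bijection on the quotient $\mathcal{M}(G)/_{\sim_\Prep}$. First I would observe that $\pi_G$ is constant on each $\Prep$-equivalence class: this is immediate from Definition \ref{Pequivalent}, since $d_1 \sim_\Prep d_2$ means precisely $P(G,d_1)=P(G,d_2)$. Hence $\pi_G$ factors through the quotient, yielding an induced map
\begin{equation*}
\widetilde{\pi}_G : \faktor{\mathcal{M}(G)}{\sim_\Prep} \longrightarrow \Prep(G), \qquad [d] \mapsto P(G,d),
\end{equation*}
and this factorization is the content of the correspondence in \eqref{correspondencia}. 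It remains to check that $\widetilde{\pi}_G$ is injective and surjective.

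Injectivity is essentially formal: if $\widetilde{\pi}_G([d_1]) = \widetilde{\pi}_G([d_2])$ then $P(G,d_1) = P(G,d_2)$, which by \eqref{Partitionrelation} is exactly the statement $d_1 \sim_\Prep d_2$, i.e.\@ $[d_1]=[d_2]$. Surjectivity is where the real work sits, but it has already been done: Lemma \ref{surjectivity} states that for any unitary symmetric partition $\mathcal{P}$ of $G$ with $|\mathcal{P}| \le |\mathbb{R}|$ there exists a right-invariant metric $d \in \mathcal{M}(G)$ with $P(G,d) = \mathcal{P}$, so $\widetilde{\pi}_G([d]) = \mathcal{P}$. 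Thus every element of $\Prep(G)$ (restricted to those partitions of cardinality at most $|\mathbb{R}|$, as in the statement) is in the image.

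I would also make explicit the cardinality caveat in the statement: $\Prep(G)$ as defined in \eqref{PG} consists of all unitary symmetric partitions, but only those with $|P| \le |\mathbb{R}|$ can arise from a metric, since a metric assigns to each part a distinct nonnegative real weight and there are only $|\mathbb{R}|$ of those. So strictly the codomain of the bijection is $\{P \in \Prep(G) : |P| \le |\mathbb{R}|\}$, which is what the proposition asserts. Combining the three observations — that $\pi_G$ is $\Prep$-invariant (Definition \ref{Pequivalent}), injective on classes by definition of $\sim_\Prep$, and surjective onto the admissible partitions by Lemma \ref{surjectivity} — completes the proof.

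The main obstacle is not in this proof at all but was front-loaded into Lemma \ref{surjectivity}: constructing an honest metric (verifying the triangle inequality) realizing an arbitrary admissible partition, which the excerpt handles via the trick of confining all nonzero weights to the interval $[1,2]$. Given that lemma, the present proposition is a short bookkeeping argument: the only thing to be careful about is stating the domain and codomain precisely so that $\widetilde{\pi}_G$ is genuinely onto, and noting that well-definedness and injectivity are two readings of the same equivalence \eqref{Partitionrelation}.
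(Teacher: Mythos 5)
Your proof is correct and follows essentially the same route as the paper: the induced map $[d]\mapsto P(G,d)$ is well defined (landing in $\Prep(G)$ by Lemma \ref{USP}, via \eqref{Particioninducida}), injective by the very definition of $\sim_\Prep$, and surjective by Lemma \ref{surjectivity}. Your extra remark on the cardinality restriction $|P|\le|\mathbb{R}|$ is a harmless clarification the paper leaves implicit.
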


\begin{proof}
Consider the map $\bar \pi_G : \mathcal{M}(G)/_{\sim_\mathcal{P}} \longrightarrow \Prep(G)$ given by $\bar \pi_G([d])=P(G,d)$.
The map $\bar \pi_G$ is well defined and surjective by Lemmas~\ref{USP} and \ref{surjectivity}, and it is clearly injective by definition. 
\end{proof}

We note that there is no canonical way to associate a metric to a given unitary symmetric partition of $G$. 
For example in the finite case, given such a partition $\mathcal{P} = \{P_0,P_1,\ldots,P_n\}$, using the weight defined in the proof of Lemma~\ref{surjectivity} one can obtain a metric, although this depends on the choice of the numbers $a_i$.
For instance, we can take the numbers $a_i=1+\tfrac in$ for $i=1,\ldots,n$ and define the weight $w(x)$ on $G$ as in \eqref{ais}.
Multiplying by $n$ we get an integral weight, but with a gap between $w_0=0$ and the minimum non-zero weight $w_1$.

\subsubsection*{Bi-invariant metrics} A \textit{bi-invariant metric} is both a right-invariant and a left-invariant metric (see \eqref{invariance} and \eqref{inv w}). For abelian groups all these notions (R-invariance, L-invariance and bi-invariance) coincide. 

As one can imagine, bi-invariant metrics are closely related with conjugacy classes.
	Let $(G,d)$ be a metric group with weight $w$. If $d$ is bi-invariant then $w$ is constant on conjugacy classes. That is, for every $x,y \in G$ we have 
	$$w(x)=d(x,e)=d(yx,y)=d(yxy^{-1}, yy^{-1})=d(yxy^{-1},e)=w(yxy^{-1})$$
where in the second and third equalities we have used left and right invariance, respectively.	
Conversely, if $d$ is right (or left) invariant	and $w$ is constant on conjugacy classes then $d$ is bi-invariant.
In fact, for every $x,y,z \in G$ we have 
	$$ d(zx,zy)=w(zx(zy)^{-1})=w(zxy^{-1}z^{-1})=w(xy^{-1})=d(x,y).$$

The partition $P_0,P_1,\ldots,P_s$ associated to a bi-invariant metric of $G$ is unitary symmetric and, by the previous comments, also \textit{conjugate}, i.e.\@ 
$gP_i g^{-1}=P_i$ for all $g\in G$ and $i=0,\ldots,s$.
We point out that the classes of equivalence of bi-invariant metrics are in 1--1 correspondence with the unitary symmetric conjugate partitions $\Prep^c(G)$ of $G$, that is 
\begin{equation} \label{correspondencia2}
\faktor{\mathcal{M}(G)}{{\sim_{\mathcal{P}^c}}} \quad \longleftrightarrow \quad \Prep^c(G)
\end{equation}
where $\sim_{\mathcal{P}^c}$ denotes the equivalence relation on invariant metrics given by identifying those metrics having the same unitary symmetric conjugate partition of $G$.
In fact, if $G$ has a unitary symmetric and conjugate partition $\mathcal{P}'$, then there is a bi-invariant metric $d$ on $G$ such that $P(G,d)=\mathcal{P}'$. In fact, since $\mathcal{P}'$ is unitary and symmetric then, by Lemma \ref{surjectivity}, there is a right-invariant metric $d$ such that $P(G,d)=\mathcal{P}'$. Since $\mathcal{P}'$ is conjugate, then $d$ is also left-invariant.

\section{Symmetry groups and distance graphs} \label{S3}
In this section we give a relation between the symmetry group and the distance graph of a metric group $(G,d)$.

\subsection*{The symmetry group of a metric group}
From Definition \ref{def sym grp}, the symmetry group of a metric group $(G,d)$ is the set of all the permutations of $G$ preserving the metric $d$, that is
\begin{equation} \label{SGd} 
\Gamma(G,d) = \{ \sigma \in \Sym_G : d(\sigma(x),\sigma(y))=d(x,y)\}.
\end{equation}
We have the following equivalence relation in $\mathcal{M} (G)$,
$d_1\sim_{\Gamma} d_2$ if and only if $\Gamma (G,d_1)=\Gamma (G,d_2)$.
In this case, we will say that $d_1$ y $d_2$ are \textit{$\Gamma$-equivalent}.
Note that $\mathcal{P}$-equivalence of metrics implies $\Gamma$-equivalence of metrics.

To study these symmetry groups we will need the following generalization of dihedral groups.
\begin{defi} \label{Dihedral}
Let $G$ be an abelian group and $i: G\rightarrow G$ the inversion, given by $i(g)=g^{-1}$. 
We define the \textit{dihedral group} of $G$ by
$\Di(G)= G\rtimes \langle i \rangle$. 
\end{defi}

If $G$ is the cyclic group of $n$ elements, then $\Di (\Z_n) \simeq \Di_n$, the classic dihedral group, for $n\geq 3$ while 
$\Di(\Z_2)\simeq \Z_2$. However, $\Di(\Z_2^n) = \Z_2^n$. That is, $\Di(G)$ can be abelian or not. Also, 
$G \subseteq \Di(G)$ but the equality may happen. Moreover, if $G$ is abelian and $G\ne \Z_2^k$ then $\D(G)=G\rtimes \Z_2$.

By definition, $\Di(G) \subseteq \mathbb{S}_G$. However, notice that
$\Di (G) \leq \mathrm{Hol}(G) \simeq N_{\Sym_G} (G) \leq \Sym_G$, 
where $\mathrm{Hol}(G)= G \rtimes \mathrm{Aut}(G)$ 
is the holomorph of $G$ and 
$N_{\Sym_G}(G) = \{\sigma\in \Sym_G : \sigma G \sigma^{-1} = G\}$
is the normalizer of $G$ in $\Sym_{G}$. 
Since $i \in \mathrm{Aut}(G)$ we have that $\Di (G)$ is a subgroup of $\mathrm{Hol}(G)$, while the isomorphism 
$\mathrm{Hol}(G) \simeq N_{\Sym_G} (G)$  is obtained via the identification of $G$ with its right regular representation $G_R$. From now on, we will think of $\Di (G)$ as a subgroup of $\Sym_G$ where the inversion 
acts by permuting each element of $G$ with its inverse.

For abelian groups, the following is useful when trying to compute the symmetric group of a metric space $(G,d)$. 
\begin{prop} \label{Partsimetrica}
Let $(G,d)$ be an abelian metric group.
Then, the inversion automorphism on $G$ preserves distances, and hence we have that
\begin{equation} \label{contentions}
\mathbb{D}(G) \le \Gamma(G,d) \le \mathbb{S}_{G}.
\end{equation} 
\end{prop}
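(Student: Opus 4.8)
The plan is to verify directly that the inversion map $i\colon G\to G$, $i(g)=g^{-1}$, lies in $\Gamma(G,d)$; once this is done, the chain \eqref{contentions} follows immediately, since $\Gamma(G,d)$ always contains $G_R$ (this is just the statement that $d$ is right-invariant, which is our standing assumption), and $\D(G)$ is by Definition~\ref{Dihedral} the subgroup of $\Sym_G$ generated by $G_R$ together with $i$, while the inclusion $\Gamma(G,d)\le\Sym_G$ is trivial.

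First I would recall that since $d$ is right-invariant on the abelian group $G$ it is in fact bi-invariant, so $d(x,y)=w(xy^{-1})=w(x-y)$ for the associated weight $w$, and $w(-x)=w(x)$ for all $x\in G$ by Lemma~\ref{USP} (symmetry of the induced partition). Then for arbitrary $x,y\in G$ I compute
\begin{equation*}
d^{i}(x,y)=d(i(x),i(y))=d(x^{-1},y^{-1})=w\big(x^{-1}(y^{-1})^{-1}\big)=w\big(x^{-1}y\big)=w\big((xy^{-1})^{-1}\big)=w(xy^{-1})=d(x,y),
\end{equation*}
where in the penultimate step I used commutativity of $G$ (to write $x^{-1}y=(y^{-1}x)^{-1}=(xy^{-1})^{-1}$... more directly $x^{-1}y=(yx^{-1})^{-1}$ and $yx^{-1}=xy^{-1}$) and in the last step the symmetry $w(g^{-1})=w(g)$. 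Hence $d^{i}=d$, i.e.\ $i\in\Gamma(G,d)$, and the inversion automorphism preserves distances as claimed.

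With $i\in\Gamma(G,d)$ established, I would conclude: $\Gamma(G,d)$ is a subgroup of $\Sym_G$ containing both $G_R$ and $i$, and therefore contains the subgroup they generate, which is precisely $\D(G)=G\rtimes\langle i\rangle$ viewed inside $\Sym_G$ as explained in the remarks following Definition~\ref{Dihedral}. The upper bound $\Gamma(G,d)\le\Sym_G$ is immediate from the definition \eqref{SGd}. This gives \eqref{contentions}.

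Honestly there is no substantive obstacle here: the only point requiring the slightest care is the bookkeeping with inverses in a (possibly additively or multiplicatively written) abelian group, and the observation that right-invariance plus commutativity forces bi-invariance so that $d$ genuinely factors through the weight. The one thing worth stating cleanly is why the subgroup generated by $G_R$ and $i$ equals $G\rtimes\langle i\rangle$ rather than something larger — this is exactly the content of the identification of $\D(G)$ as a subgroup of $\mathrm{Hol}(G)\simeq N_{\Sym_G}(G)$ recorded just before the proposition, so I would simply cite that discussion.
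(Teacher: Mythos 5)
Your proof is correct and follows essentially the same route as the paper: a direct computation showing that inversion preserves distances (the paper uses symmetry of $d$, you use symmetry of $w$ — equivalent facts), combined with the trivial observations that right-invariance gives $G\le\Gamma(G,d)$ and that $\Gamma(G,d)\le\Sym_G$ by definition.
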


\begin{proof}
The metric $d$ is (right) traslation invariant, i.e. $G\leq \Gamma (G,d)$. For a group, the inversion is an automorphism if and only if the group is abelian. So, since $G$ is abelian, we only need to prove that it preserves distances. In fact, we have that
$$d(a,b)=w(ab^{-1})=w(b^{-1}a)=d(b^{-1},a^{-1})=d(i(b),i(a))=d(i(a),i(b)),$$
hence $\mathbb{D}(G) \leq~\Gamma(G,d)$, as we wanted to show.
\end{proof}

\goodbreak  
 
We now recall some classes of automorphisms. 
The group of inner automorphisms of $G$ is the one formed only by conjugations, that is $\operatorname{Inn}(G)=\{\varphi_g : g\in G\}$, where $\varphi_g(x) =gxg^{-1}$ for every $x\in G$. An automorphism $\sigma$ of a group $G$ is called a \textit{class-preserving} automorphism if for every $g\in G$, there exists an element $h\in G$ such that $\sigma(g) = hgh^{-1}$. These automorphisms form the subgroup denoted as $\operatorname{Aut}_{cp}(G)$. Clearly every 
inner automorphism of $G$ is class-preserving.
Similarly, an automorphism $\sigma$ of a group $G$ is called a \textit{class-inverting} automorphism if, for any $g \in G$, there exists an element $h \in G$ such that $\sigma(g) = hg^{-1}h^{-1}$.
The class-preserving and the class-inverting automorphisms form the subgroup of \textit{extended class-preserving} automorphism group, denoted by $\operatorname{Aut}_{ecp}(G)$. That is, the subgroup of all the automorphisms that sends every element either to an element in its conjugacy class or to an element in the conjugacy class of its inverse.
Summing up, we have  
\begin{equation} \label{automorphisms}
\operatorname{Inn}(G) \le \operatorname{Aut}_{cp}(G) \le \operatorname{Aut}_{ecp}(G) \le \operatorname{Aut}(G).
\end{equation}

For general metric groups we can give a similar result as the previous proposition, provided that the metric involved is bi-invariant.
\begin{prop} \label{Gbiinv}
If $(G,d)$ is a bi-invariant metric group then 
\begin{equation} \label{contentions aut}
	G \rtimes \operatorname{Aut}_{ecp}(G) \leq~\Gamma(G,d) \le \mathbb{S}_{G}.
\end{equation} 
In particular if $G$ is abelian then $G\rtimes \operatorname{Aut}_{ecp}(G)=\D(G)$.
\end{prop}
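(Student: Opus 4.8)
The plan is to prove the two containments in \eqref{contentions aut} separately, and then specialize to the abelian case. Since $(G,d)$ is bi-invariant, the right regular representation $G_R$ already lies in $\Gamma(G,d)$ (this is just right translation invariance, rephrased as in Definition~\ref{def sym grp}). So the substantive point is that every $\sigma \in \operatorname{Aut}_{ecp}(G)$ also preserves $d$, and that $G_R$ together with $\operatorname{Aut}_{ecp}(G)$ generate a copy of the semidirect product $G \rtimes \operatorname{Aut}_{ecp}(G)$ inside $\Sym_G$.

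First I would recall from the comments before \eqref{correspondencia2} that, because $d$ is bi-invariant, the associated weight $w$ is constant on conjugacy classes, and moreover $w(g) = w(g^{-1})$ always (Lemma~\ref{USP}, symmetry of the induced partition); hence $w$ is constant on each set $C \cup C^{-1}$ where $C$ is a conjugacy class. Now take $\sigma \in \operatorname{Aut}_{ecp}(G)$. By the very definition of an extended class-preserving automorphism, for each $g \in G$ the element $\sigma(g)$ lies either in the conjugacy class of $g$ or in that of $g^{-1}$; in either case $w(\sigma(g)) = w(g)$ by the previous sentence. Since $\sigma$ is an automorphism, $\sigma(xy^{-1}) = \sigma(x)\sigma(y)^{-1}$, and therefore
\[
d(\sigma(x),\sigma(y)) = w\big(\sigma(x)\sigma(y)^{-1}\big) = w\big(\sigma(xy^{-1})\big) = w(xy^{-1}) = d(x,y),
\]
so $\sigma \in \Gamma(G,d)$. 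Thus both $G_R$ and $\operatorname{Aut}_{ecp}(G) \le \aut(G)$ sit inside $\Gamma(G,d)$. To see that together they generate $G \rtimes \operatorname{Aut}_{ecp}(G)$ and not something smaller, I would use the standard fact (already invoked in the excerpt for the holomorph) that inside $\Sym_G$ one has $\operatorname{Hol}(G) = G_R \rtimes \aut(G)$ with $\aut(G)$ acting on $G_R$ in the natural way; restricting the $\aut(G)$ factor to the subgroup $\operatorname{Aut}_{ecp}(G)$ gives the subgroup $G \rtimes \operatorname{Aut}_{ecp}(G) \le \operatorname{Hol}(G) \le \Sym_G$, and this subgroup is contained in $\Gamma(G,d)$ by the two containments just established. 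The upper bound $\Gamma(G,d) \le \Sym_G$ is trivial.

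For the final assertion, suppose $G$ is abelian. Then $\operatorname{Inn}(G)$ is trivial, every conjugacy class is a singleton, and an automorphism $\sigma$ is extended class-preserving precisely when $\sigma(g) \in \{g, g^{-1}\}$ for every $g$. If $G \ne \Z_2^k$ there is some $g$ with $g \ne g^{-1}$; a short argument (if $\sigma$ fixed one such $g$ and inverted another $h$, consider $\sigma(gh)$, forcing a contradiction unless $\sigma$ is globally the identity or globally the inversion) shows $\operatorname{Aut}_{ecp}(G) = \langle i \rangle \cong \Z_2$, so $G \rtimes \operatorname{Aut}_{ecp}(G) = G \rtimes \langle i\rangle = \D(G)$ by Definition~\ref{Dihedral}; and if $G = \Z_2^k$ then $i = \mathrm{id}$, so $\operatorname{Aut}_{ecp}(G)$ is trivial and $G \rtimes \operatorname{Aut}_{ecp}(G) = G = \D(G)$, again matching Definition~\ref{Dihedral} and the remark that $\D(\Z_2^n) = \Z_2^n$. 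I expect the only mild obstacle to be the bookkeeping in identifying the subgroup generated by $G_R$ and $\operatorname{Aut}_{ecp}(G)$ with the abstract semidirect product $G \rtimes \operatorname{Aut}_{ecp}(G)$ — i.e.\ checking the action is the expected one — but this is immediate from the holomorph description, so the proof is essentially the weight computation displayed above.
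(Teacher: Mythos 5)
Your proposal is correct and follows essentially the same route as the paper: right-invariance gives $G\le\Gamma(G,d)$, and the same weight computation (using that $w$ is constant on conjugacy classes and satisfies $w(g)=w(g^{-1})$) shows that every $\sigma\in\operatorname{Aut}_{ecp}(G)$ preserves $d$, with the abelian case reduced to $\operatorname{Aut}_{ecp}(G)=\langle i\rangle$ exactly as in the paper. The only differences are cosmetic refinements on your part: you argue element-by-element (so you also cover automorphisms that mix class-preserving and class-inverting behavior, whereas the paper splits into the two global cases), you make the holomorph identification of $\langle G_R,\operatorname{Aut}_{ecp}(G)\rangle$ with the semidirect product explicit, and you supply the short argument that an automorphism of an abelian group with $\sigma(g)\in\{g,g^{-1}\}$ for all $g$ is the identity or the inversion, which the paper merely asserts.
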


\begin{proof}
The metric $d$ is right translation invariant, i.e.\@ $G\leq \Gamma (G,d)$. Let $\sigma\in \operatorname{Aut}(G)$. If $\sigma$ is class-preserving then 
$$d(a,b)=w(ab^{-1})=w(\sigma(ab^{-1}))=w(\sigma(a)\sigma(b)^{-1})=d(\sigma(a),\sigma(b)),$$
while if $\sigma\in \operatorname{Aut}(G)$ is class-inverting then
$$d(a,b)=w(ab^{-1})=w(ba^{-1})=w(\sigma(ab^{-1}))=w(\sigma(a)\sigma(b)^{-1})=d(\sigma(a),\sigma(b)),$$
hence $\operatorname{Aut}_{ecp}(G) \leq~\Gamma(G,d)$, which implies $G \rtimes \operatorname{Aut}_{ecp}(G) \leq~\Gamma(G,d)$ as we wanted to show. 

If $G$ is abelian, the only class-preserving automorphism is the identity and the only class-inverting automorphism is the inversion. In this way, we have that $\operatorname{Aut}_{ecp}(G) = \langle i \rangle$, and hence $G\rtimes \operatorname{Aut}_{ecp}(G) = \D(G)$, by Definition \ref{Dihedral}, as we wanted to show.
\end{proof}

It is known that 
\begin{equation} \label{Inn}
\operatorname{Inn}(G) \simeq G/Z(G) 
\end{equation}
for any group $G$. Hence, we have that 
$\operatorname{Inn}(\Sym_n)=\Sym_n$ for $n \ge 3$, $\operatorname{Inn}(\D_n)=\D_n$ for $n$ odd and $\operatorname{Inn}(\D_n)=\D_n/\Z_2$ for $n$ even and also $\operatorname{Inn}(\Q_{4n})=\Q_{4n}/\Z_2$ for any $n\ge 2$.

\begin{exam} \label{GDQ}
Let $G = \D_{2n}$ or $G=\Q_{4n}$ for any $n\ge 2$.
By \eqref{automorphisms}, \eqref{contentions aut} and \eqref{Inn} we have
$G \rtimes G/\Z_2 \le \Gamma(G,d)$
where $d$ is any bi-invariant metric on $G$.
Thus, $8n^2 \le |\Gamma(G,d)|$. If also the partition of $d$ has a part of the form $P_i =\{g_i,g_i^{-1}\}$ with $g_i$ of order $2n$ (i.e.\@ the graph of distances --see below-- contains $2$ disjoint cycles $C_{2n}$), as for instance in the case of the Lee metric, then 
$\Gamma(G,d) \le \D_{2n} \wr \Z_2$
(see Corollary \ref{Cay(Cn)}). Thus, $2^3 n^2 \le |\Gamma(G,d)| \le 2^5 n^2$, 
in this case.
\hfill $\lozenge$
\end{exam}

\subsection*{Distance and partition graphs} 
Given a finite metric group $(G,d)$ we associate to it a weighted graph which we denote by $\mathcal{G}{(G,d)}$ and call it the \textit{distance graph} of 
$(G,d)$.
It is the complete graph of size $|G|$ with the weights on the edges given by the distance $d$. 
That is, $\mathcal{G}{(G,d)} = (K_n, \omega_d)$ where $|G|=n$ and $\omega_d(\{x,y\})=d(x,y)$ for every $x,y \in G$. 
In the same manner we can associate to any unitary symmetric partition $\mathcal{P}=\{P_i\}_{i\in\mathcal{I}}$ of $G$, a \textit{partition graph}  $\mathcal{G}{(G,\Prep)} = (K_n, \omega_{\Prep})$ where $\omega_{\Prep}(\{x,y\})=i$ if $xy^{-1}\in P_i$ for every $x,y \in G$.

Next we will relate the symmetry group of $(G,d)$ with the automorphism group of the distance graph of $(G,d)$, which is the same automorphism of the graph $\mathcal{G}{(G,\Prep)}$ . 
We need to recall Cayley graphs. Given a group $G$ and a subset $S$ of $G$ one has a Cayley graph $Cay(G,S)$ where $G$ is the vertex set and two elements $g,h$ form a (directed) edge from $g$ to $h$ if and only if $g-h \in S$. If $G$ is abelian we consider the Cayley graph undirected. If $e \notin S$ the graph $Cay(G,S)$ is loopless and if $S$ is symmetric ($S=S^{-1}$) then it is a simple (undirected with no multiple edges) graph.

\begin{thm} \label{capcays}
Let $(G,d)$ be a metric group and let $\mathcal{P}=\{P_0,P_1,\ldots,P_s\}$ be its unitary symmetric partition. 
Then, we have
\begin{equation} \label{G Aut}
\Gamma(G,d) = \bigcap_{1\le i \le s} \mathrm{Aut} \big( Cay(G,P_i) \big).
\end{equation}
\end{thm}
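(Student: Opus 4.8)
The plan is to prove the two inclusions of \eqref{G Aut} separately, using the fact that a metric-preserving permutation of $G$ must permute the ``distance-$w_i$ spheres'' among themselves and that, since the partition is unitary, the sphere of radius $w_i$ around $x$ is exactly the $P_i$-neighbourhood of $x$ in $Cay(G,P_i)$. First I would unwind the definitions: a permutation $\sigma\in\Sym_G$ lies in $\Gamma(G,d)$ iff $d(\sigma(x),\sigma(y))=d(x,y)$ for all $x,y$, which by right-invariance and the definition of the induced weight $w$ is equivalent to $w(\sigma(x)\sigma(y)^{-1})=w(xy^{-1})$ for all $x,y\in G$. Passing to the partition, and recalling $\omega_{\Prep}(\{x,y\})=i\iff xy^{-1}\in P_i$, this says precisely that $\sigma$ preserves the edge-labelling $\omega_{\Prep}$ of the partition graph $\mathcal{G}(G,\Prep)$; equivalently, for every $i$, the pair $\{x,y\}$ is a ``level-$i$ edge'' iff $\{\sigma(x),\sigma(y)\}$ is. Since $\mathcal{G}(G,\Prep)$ is the edge-disjoint union of the (simple, loopless, since $\Prep$ is unitary and symmetric) graphs $Cay(G,P_i)$ over $i=1,\dots,s$, preserving all labels simultaneously is the same as simultaneously being a graph automorphism of each $Cay(G,P_i)$. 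This already gives the heart of the argument; it remains to check the logical subtlety that preserving each individual $Cay(G,P_i)$ \emph{separately} forces preserving the full labelling.

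For the inclusion $\Gamma(G,d)\subseteq\bigcap_i\aut(Cay(G,P_i))$ I would argue directly: if $\sigma\in\Gamma(G,d)$ and $\{x,y\}$ is an edge of $Cay(G,P_i)$, i.e. $xy^{-1}\in P_i$, then $w(\sigma(x)\sigma(y)^{-1})=w(xy^{-1})=w_i$, so $\sigma(x)\sigma(y)^{-1}\in P_i$ (because $P_i$ is exactly the set of elements of weight $w_i$), hence $\{\sigma(x),\sigma(y)\}$ is again an edge of $Cay(G,P_i)$; applying the same to $\sigma^{-1}\in\Gamma(G,d)$ gives that $\sigma$ is a bijection on the edge set, so $\sigma\in\aut(Cay(G,P_i))$ for each $i=1,\dots,s$. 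For the reverse inclusion, suppose $\sigma\in\bigcap_i\aut(Cay(G,P_i))$ and fix $x,y\in G$. If $x=y$ then $d(\sigma(x),\sigma(y))=0=d(x,y)$. If $x\ne y$, then $xy^{-1}\ne e$, so $xy^{-1}\in P_i$ for a unique $i\in\{1,\dots,s\}$ (the parts partition $G\smallsetminus\{e\}$); thus $\{x,y\}$ is an edge of $Cay(G,P_i)$ and of no $Cay(G,P_j)$ with $j\ne i$. Since $\sigma$ is an automorphism of $Cay(G,P_i)$, $\{\sigma(x),\sigma(y)\}$ is an edge of $Cay(G,P_i)$, so $\sigma(x)\sigma(y)^{-1}\in P_i$; and since $\sigma$ is also an automorphism of every $Cay(G,P_j)$, $j\ne i$, and $\{x,y\}$ is \emph{not} an edge there, $\{\sigma(x),\sigma(y)\}$ is not either, so $\sigma(x)\sigma(y)^{-1}\notin P_j$. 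Hence $\sigma(x)\sigma(y)^{-1}$ lies in $P_i$ and in no other part, giving $w(\sigma(x)\sigma(y)^{-1})=w_i=w(xy^{-1})$, i.e. $d(\sigma(x),\sigma(y))=d(x,y)$. As $x,y$ were arbitrary, $\sigma\in\Gamma(G,d)$.

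The only genuinely delicate point — and the one I would take care to state explicitly — is in the reverse inclusion: being a graph automorphism of each $Cay(G,P_i)$ \emph{in isolation} a priori only says ``$\{x,y\}$ a level-$i$ edge $\Rightarrow\{\sigma(x),\sigma(y)\}$ a level-$i$ edge'', not that $\sigma$ cannot send a level-$i$ edge to a level-$j$ edge for $j\ne i$; the resolution is precisely that $\{P_1,\dots,P_s\}$ partitions $G\smallsetminus\{e\}$, so every non-diagonal pair $\{x,y\}$ carries exactly one label, and an edge that is present in $Cay(G,P_i)$ is absent from all $Cay(G,P_j)$, $j\ne i$. This ``exactly one label'' bookkeeping, together with bijectivity of $\sigma$ (which is automatic since $\sigma\in\Sym_G$ and each $Cay(G,P_i)$ has $G$ as its full vertex set), is what upgrades the family of individual automorphism conditions to the single metric-preserving condition. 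I do not expect any obstacle beyond keeping this bookkeeping straight; everything else is a direct translation through the definitions of $w$, $P(G,d)$, and $Cay(G,S)$ given earlier in the paper, and no appeal to the deeper structure theory (holomorphs, class-preserving automorphisms, etc.) is needed for this particular statement.
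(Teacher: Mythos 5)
Your proof is correct and follows essentially the same route as the paper: identify metric preservation with preservation of the edge-labelling of the partition graph, decompose that labelled complete graph into the simple Cayley graphs $Cay(G,P_i)$, and observe that preserving each $Cay(G,P_i)$ separately is equivalent to preserving the whole labelling because every pair $\{x,y\}$ with $x\ne y$ carries exactly one label. The paper packages this as $\Gamma(G,d)=\mathrm{Aut}(\mathcal{G}(G,d))$ together with $\mathrm{Aut}(G_1\cup G_2)=\mathrm{Aut}(G_1)\cap\mathrm{Aut}(G_2)$ for weighted unions, while you verify the two inclusions directly, but the content is the same.
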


\begin{proof}
The automorphism group $\mathrm{Aut}(\mathcal{G}(G,d))$ of the distance graph $(G,d)$ is the group of all 
edge-preserving bijections 
$\sigma$ of $G$ also preserving the weights of the graph, i.e.\@ if $xy$ is an edge then 
$\omega_d(\sigma(x)\sigma(y)) = \omega_d(xy)$. 
In this way, we have that 
\begin{equation} \label{Aut Gd}
   \Gamma(G,d) = \mathrm{Aut}(\mathcal{G}(G,d)).
\end{equation} 

Now, as mentioned $\mathrm{Aut}(\mathcal{G}(G,d))= \mathrm{Aut}(\mathcal{G}(G,\Prep))$ and note that each part $P_i$ of the partition induces a simple Cayley graph $Cay(G,P_i)$. In fact, if $0<w_1< \cdots < w_s$ are the different nonzero weights of $d$ and 
$P_i=\{g\in G : w(g)=w_i\}$ for $1\le i \le s$, then 
$$\{gh\} \in E \quad \Leftrightarrow \quad g-h\in P_i \quad \Leftrightarrow \quad d(g,h)=w_i.$$

We can consider the graph $Cay(G,P_i)$ naturally weighted by the distances of elements in $G$. 
In this way, the distance graph of $(G,d)$ is the union of simple Cayley graphs, that is 
\begin{equation} \label{cayleys}
	\mathcal{G}(G,d) =  \bigcup_{1\le i \le s} Cay(G,P_i). 
\end{equation} 
We recall that the union of graphs $G_i=(V_i,E_i)$, $i=1,2$, is the graph $G=(V_1 \cup V_2, E_1 \cup E_2)$.
Then, by \eqref{cayleys} we have that $\mathrm{Aut} \big( \mathcal{G}(G,d) \big) = \mathrm{Aut} (Cay(G,P_1) \cup \cdots \cup Cay(G,P_s))$ and we get 
\begin{equation} \label{Aut}
\mathrm{Aut} \big( \mathcal{G}(G,d) \big) = \bigcap_{1\le i \le s} \mathrm{Aut} \big( Cay(G,P_i) \big).
\end{equation}
In fact, by \eqref{cayleys}, it is enough to show that $\mathrm{Aut}(G_1 \cup G_2) = \mathrm{Aut}(G_1) \cap \mathrm{Aut}(G_2)$ for any pair of graphs 
$G_1, G_2$ with the same nodes. If $\sigma\in\mathrm{Aut}(G_1 \cup G_2)$, then $\sigma$ is a permutation that preserves the weights of 
$G_1 \cup G_2$. That is, it preserves the edges of $G_1$ and $G_2$, so the restriction to both of the graphs $G_1, G_2$ is an automorphism of the corresponding graphs. On the other hand, a permutation that preserves the edges of $G_1$ and $G_2$ preserves the weights of $G_1\cup G_2$.
The result is automatic from \eqref{Aut Gd} and \eqref{Aut}.
\end{proof}

%Since many of the symmetry groups that will appear in the following are wreath products of groups we now 
%recall its definition.
%Let $(G,X)$ and $(H,Y)$ be permutation groups. The product 
%$G^{Y}=\{f:Y \rightarrow G\}$ acts over $X\times Y$ in the first coordinate in the following way
%$f(x,y)=(f(y)x,y)$ for all $x\in X$, $y\in Y$, $f\in G^Y$. On the other hand, $H$ acts on $X\times Y$ in the second coordinate by 
%$h(x,y)=(x,h(y))$ for all  $x\in X$, $y\in Y$, $h\in H$. Furthermore, $H$ acts on $G^Y$ by permuting the coordinates: 
%$(hf)(y)=f(h^{-1}(y))$.
%Thus, the semidirect product $G^{Y} \rtimes H$ acts on $X\times Y$.
%This permutation group is called \textit{wreath product} of $(G,X)$ and $(H,Y)$ and it is denoted by 
%$G\wr H$.
%In the paper, $G$ and $H$ will act on themselves ($X=G$ and $Y=H$). The smallest non trivial wreath product is 
%$\Z_2 \wr \Z_2 \simeq \mathbb{D}_4$.

As usual, we denote by $C_n$ the cycle graph of $n$ vertices ($n$-cycle) and by $mC_n$ the disjoint union of $m$ copies of $C_n$. 
We will also use the convention $C_2=K_2$.
Given two groups $G,H$, if $H$ acts on $\{1,2,\ldots,n\}$ then the semidirect product $G^{n} \rtimes H$ is called the wreath product of $G$ and $H$ and denoted by $G\wr H$.
The smallest non trivial wreath product is $\Z_2 \wr \Z_2 \simeq \mathbb{D}_4$.
We have the following consequence of Proposition \ref{Partsimetrica} and Theorem \ref{capcays}.

\begin{coro} \label{Cay(Cn)}
Consider a metric group $(G,d)$ of order $n$ with associated unitary symmetric partition $\mathcal{P}=\{P_0,P_1,\ldots,P_s\}$. 
If %for some $m \mid n$ we have that 
$Cay((G,P_i))= m \, C_{\frac nm}$ 
for some $m \mid n$ and $i=1,\ldots,s,$ then 
\begin{equation} \label{AutGPi}
\mathrm{Aut}(Cay(G,P_i))=\D_{\frac nm} \wr \Sym_m
\end{equation}
and hence $\Gamma(G,d) \le \D_{\frac nm} \wr \Sym_m$.
In particular, if $m=1$ or $m=n$ then $\Gamma(G,d)=\D_n$ or $\Gamma(G,d)=\Sym_n$, respectively. 
Thus, $\Gamma (G,d_{Ham}) \simeq \Sym_n$ for any $G$ and $\Gamma (\Z_n,d_{Lee}) \simeq \D_n$ for $n\in \N$. %, for example $\Gamma(\Z_n,d_{Lee})=\D_n$. 
\end{coro}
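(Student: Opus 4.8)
The plan is to reduce the statement to Theorem~\ref{capcays} together with two elementary facts about automorphism groups of graphs: that $\mathrm{Aut}(C_k)\simeq\D_k$ for every $k$, and that the automorphism group of a disjoint union of $m$ pairwise isomorphic connected graphs is the wreath product of the automorphism group of one copy with $\Sym_m$. Here, for $k\le 2$ the symbol $\D_k$ must be read as $\D(\Z_k)$ of Definition~\ref{Dihedral}, so that $\D_2\simeq\Z_2$ and $\D_1$ is trivial, which is consistent with the convention $C_2=K_2$; this is the only place where small indices need a word of care.

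First I would establish the displayed formula \eqref{AutGPi}. Write $Cay(G,P_i)=m\,C_{n/m}$. An automorphism of this graph sends each connected component to a connected component of the same isomorphism type, hence it permutes the $m$ cycles $C_{n/m}$ among themselves and acts on each one by a graph automorphism; conversely, any such choice of a permutation of the $m$ copies together with an automorphism of $C_{n/m}$ on each copy assembles to a graph automorphism. Since $\mathrm{Aut}(C_{n/m})\simeq\D_{n/m}$, this gives $\mathrm{Aut}(Cay(G,P_i))\simeq\D_{n/m}\wr\Sym_m$. Next, Theorem~\ref{capcays} asserts $\Gamma(G,d)=\bigcap_{1\le j\le s}\mathrm{Aut}(Cay(G,P_j))$, so in particular $\Gamma(G,d)\le\mathrm{Aut}(Cay(G,P_i))=\D_{n/m}\wr\Sym_m$, which is the asserted containment.

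For the extreme cases I would argue as follows. If $m=1$ then $Cay(G,P_i)=C_n$ is connected and $2$-regular, so $P_i$ is a symmetric generating subset of $G$ with $|P_i|\le 2$; a short case analysis (according to whether $P_i=\{a,a^{-1}\}$ with $a$ of order $n$, or $P_i$ consists of two distinct involutions whose product has order $n/2$) shows that $G$ is cyclic of order $n$ or dihedral of order $n$. Since $G_R\le\Gamma(G,d)$ always, and in the cyclic case $\D(\Z_n)=\D_n\le\Gamma(G,d)$ by Proposition~\ref{Partsimetrica}, one checks in either case that $\D_n\le\Gamma(G,d)$, hence $\Gamma(G,d)=\D_n$. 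The value $m=n$ is the degenerate endpoint of the formula $\D_{n/m}\wr\Sym_m$, which then reads $\D_1\wr\Sym_n=\Sym_n$; the genuine instance of this is the Hamming metric, whose induced partition is $\{e\},\,G\smallsetminus\{e\}$ with $Cay(G,G\smallsetminus\{e\})=K_n$, so that $\Gamma(G,d_{Ham})=\mathrm{Aut}(K_n)=\Sym_n$ follows at once from Theorem~\ref{capcays}. Finally, for the Lee metric on $\Z_n$ the part $P_1=\{1,n-1\}$ gives $Cay(\Z_n,\{\pm1\})=C_n$, so the $m=1$ bound yields $\Gamma(\Z_n,d_{Lee})\le\D_n$, while Proposition~\ref{Partsimetrica} gives $\D(\Z_n)=\D_n\le\Gamma(\Z_n,d_{Lee})$; hence equality.

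The only steps that are not purely formal are the identification $\mathrm{Aut}(m\,C_{n/m})\simeq\D_{n/m}\wr\Sym_m$, which rests on the fact that a graph isomorphism preserves the decomposition into connected components, and --- more delicately --- the reverse inclusion $\D_n\le\Gamma(G,d)$ in the case $m=1$ when $G$ is dihedral rather than cyclic: there one must realise the ``rotation by one'' of the $n$-cycle $Cay(G,P_i)$ as a distance-preserving bijection of $G$, which uses right-invariance of $d$ together with the constraints that symmetry imposes on the remaining parts $P_j$. I expect this dihedral subcase of $m=1$ to be the main obstacle.
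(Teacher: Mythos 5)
Your identification of $\mathrm{Aut}(m\,C_{n/m})\simeq\D_{n/m}\wr\Sym_m$ and the resulting upper bound $\Gamma(G,d)\le\D_{n/m}\wr\Sym_m$ via Theorem~\ref{capcays} is correct and is exactly the paper's argument. The divergence, and the gap, is in the $m=1$ lower bound. The paper obtains $\D_n\le\Gamma(G,d)$ by citing \eqref{contentions}, i.e.\@ Proposition~\ref{Partsimetrica}, which presupposes $G$ abelian (indeed cyclic, so that $\D(G)\simeq\D_n$); it makes no attempt at the general case. You instead try to prove the lower bound for arbitrary $G$, correctly reduce to $G$ cyclic or dihedral, and then assert that ``one checks in either case that $\D_n\le\Gamma(G,d)$'' while conceding at the end that the dihedral subcase is unresolved. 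That subcase is not merely delicate --- it is false, so no amount of work with right-invariance will close it.

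Concretely, take $G=\D_4=\langle\rho,\tau\rangle$ of order $n=8$ with the unitary symmetric partition $P_0=\{e\}$, $P_1=\{\tau,\rho\tau\}$, $P_2=\{\rho^2\tau\}$, $P_3=\{\rho,\rho^2,\rho^3,\rho^3\tau\}$, realized by a right-invariant metric $d$ via Lemma~\ref{surjectivity}. Since $\tau$ and $\rho\tau$ are involutions generating $G$, $Cay(G,P_1)$ is the $8$-cycle $e,\tau,\rho,\rho^3\tau,\rho^2,\rho^2\tau,\rho^3,\rho\tau$, so $m=1$. The reflection $\sigma_0$ of this cycle fixing $e$ swaps $\rho^3\tau$ with $\rho^2\tau$, which lie in different parts, so $w(\sigma_0(\rho^2\tau))\neq w(\rho^2\tau)$ and $\sigma_0\notin\Gamma(G,d)$; since $\mathrm{Aut}(C_8)=G_R\sqcup\sigma_0 G_R$ and $G_R\le\Gamma(G,d)$, it follows that $\Gamma(G,d)=G_R\simeq\D_4$, of order $8$ rather than $|\D_8|=16$. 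So your instinct that the dihedral subcase is ``the main obstacle'' is right, but the correct resolution is to retreat to the hypothesis under which the paper actually argues: the $m=1$ (and $m=n$) conclusions, as used in the paper, concern abelian (cyclic) $G$, where Proposition~\ref{Partsimetrica} supplies $\D_n\le\Gamma(G,d)$ and the stated applications $\Gamma(\Z_n,d_{Lee})\simeq\D_n$ and $\Gamma(G,d_{Ham})\simeq\Sym_n$ go through. Your handling of the Hamming/$K_n$ case and of $\Gamma(\Z_n,d_{Lee})$ is fine.
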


\begin{proof}
 By \eqref{contentions} and \eqref{G Aut}, we have that $\D_n \le \Gamma(G,d) \le \mathrm{Aut}(Cay(G,P_i))$ for every $i=1,\ldots,s$. Also, if $Cay(G,P_i)=C_n$ for some $i$ then $\mathrm{Aut}(Cay(G,P_i))=\D_n$. This immediately implies that $\Gamma(G,d)=\D_n$.	Now, if $Cay((G,P_i))= m \, C_{\frac nm}$ for some $i$ then we have \eqref{AutGPi} 
since we can permute the $m$ different $\frac nm$-cycles to each other and in each $\frac nm$-cycle we get the dihedral group $\D_{\frac nm}$ as permutation group.    
\end{proof}

\section{Invariant metrics for small groups} \label{S4}
We now study the invariant and bi-invariant metrics and the associated partitions, symmetry groups and distance graphs of the smallest groups of order up to $7$. 
More precisely, we find all the $\mathcal{P}$-classes of metrics of 
$G \in \{ \{0\}, \Z_2,\Z_3, \Z_4, \Z_2 \times \Z_2, \Z_5, \Z_6, \Sym_3, \Z_7\}$
by giving their unitary symmetric (conjugate) partitions and distance graphs. With the aid of these graphs, we will also compute the associated symmetry groups.
Furthermore, we will indicate which of the metrics are poset metrics \cite{Brualdi}, homogeneous metrics \cite{Gre}, chain metrics or extended Lee metrics \cite{PV}. 

All the information on the metrics will be given in Tables \ref{tabla Z4}--\ref{tabla Z7}. There, we give the weight functions $w_i$ associated to the metrics $d_i$ (omitting the trivial weights $w_i(0)=0$). In the column labeled $s$ we count the number of non-trivial different weights of each given metric and in the second to last column we indicate some known metrics which fall in the same equivalence class of the given metric. In each row of the table, the weights $w_i$ correspond to the first metric listed in this penultimate column. In the last column we give the symmetry groups (as abstract groups).

\subsubsection*{Some general families of invariant metrics}
We now recall the definitions of some particular classes of invariant metrics: 
 
($a$) \textit{Hamming metric}: Let $G = G_1\times \cdots \times G_n$ with $G_1,\ldots,G_n$ groups. The Hamming metric $d_{Ham}^n$ on $G$ is determined by the Hamming weight given by 
\begin{equation*} \label{ham weight}
w_{Ham}^n (x) = \#\{1 \le i \le n : x_i\ne e_{G_i}\}
\end{equation*}
where 
$x=(x_1,\ldots, x_n) \in G$ and $e_{G_i}$ denotes the identity element in $G_i$.
For $n=1$ we simply write $d_{Ham}$ and $w_{Ham}$.

($b$) \textit{Poset metric} \cite{Brualdi}: Let $G = G_1\times \cdots \times G_n$ with $G_1,\ldots,G_n$ groups,
 and consider the poset $P = ([n],\preceq)$ where $\preceq$ is a partial order on $[n]=\{1,2,\ldots,n\}$. 
 A subset $I\subset [n]$ is an ideal if given $a \in [n]$ and $b \in I$ with $a\preceq b$ then $a\in I$. 
 If $A \subset [n]$ is a subset, then $\left\langle A \right\rangle$ denotes the minimal ideal containing $A$. The weight function on $G$ associated to the poset $P$ is given by
\begin{equation} \label{poset weight}
w_P(g) = |\left\langle supp(g)\right\rangle| \qquad \text{where} \qquad supp(g) = \{i \in [n] : g_i \neq e\}
\end{equation}
denotes the support of $g = (g_1,\ldots, g_n)\in G$. This induces the poset metric $d_P$ on $G$.  
For the $n$-antichain (trivial poset of $[n]$) we get the classic Hamming metric $d_{Ham}^n$ on $n$ coordinates.

($c$) \textit{Chain metric} \cite{PV}: Let $G$ be a group and $\C$ a chain  
$	\langle e \rangle = H_0 \subsetneq H_1 \subsetneq \cdots\subsetneq H_n = G$ of subgroups of $G$.
The chain metric $d_\mathcal{C}$ on $G$ associated to $\mathcal{C}$ is defined by 
\begin{equation*} \label{chain metric}
d_\C(x,y) = i \quad  \Leftrightarrow \quad x-y \in H_i \smallsetminus H_{i-1}
\end{equation*}
for $i=0,\ldots, n$. Here we use the convention $H_{-1}=\varnothing$.
If $\mathcal{C}$ is simply $\{e\} \subsetneq H \subsetneq G$ we denote the chain metric by $d_{H}$. In this case we have
$w_H(e)=0$ and 
\begin{equation} \label{2-w chain}
w_H(x) = \begin{cases}
1 & \qquad \text{if $x \in H\smallsetminus \{e\}$}, \\
2 & \qquad \text{if $x \in G\smallsetminus H$}.
\end{cases}
\end{equation}
Also, for the trivial chain $\{e\} \subset G$ we get the Hamming metric $d_{Ham}$ on $G$.

($d$) \textit{Lee and extended Lee metrics} \cite{PV}: 
The well-known Lee metric $d_{Lee}$ on $\Z_m$ is given by 
$$w_{Lee} (x) =\min \{x,m-x\}.$$
Now, for each $n \mid m$, there is an extended Lee metric $d_{n,Lee}$ on $\Z_m$ with associated weight \cite{PV}:
\begin{equation} \label{ext lee met} 
w_{n, Lee}(x)=
\begin{cases}
x  & \qquad \text{if }  x \leq n, \\
n  & \qquad \text{if }  n \leq x \leq m-n, \\
n-x & \qquad \text{if }  m-n \leq x \leq m-1.\\
\end{cases}
\end{equation}
For even $m$ and $n=\frac m2$ we have $d_{n,Lee}=d_{Lee}$. 

\goodbreak 

($e$) \textit{Homogeneous metric}: 
There is a classic definition of a metric given by the homogeneous weight on the ring $\Z_m$ \cite{Hom}. Greferath and Schmidt \cite{Gre} extended this to any finite ring $R$ with identity $1\ne 0$ (see Definition~1.2). They showed that a weight function $w$ on $R$ (in a weaker sense) is an homogeneous weight on $R$ if and only if there is a constant $\lambda \ge 0$ such that (see Theorem 1.3) 
\begin{equation}  \label{hom weight}
w(x) = \lambda \big\{ 1- \tfrac{\mu (0,Rx)}{|R^\times x|} \big\}
\end{equation}
where $R^\times x=\{y\in R : Ry=Rx\}$ and $\mu$ is the Möbius function on the poset of its principal left ideals $\{Rx : x\in R\}$  ordered by inclusion, that is $\mu(Rx, Rx)=1$ and 
$$\mu(Rx, Rz) = − \sum_{Rx≤Ry<Rz} \mu(Rx,Ry).$$
This weight does not define a metric in general. When this is the case we denote the metric by $d_{hom}$. 
Given an abelian group $G$ one can consider all different ring structures on $G$ and their corresponding homogeneous weights. 
It is known that for the group $\Z_n$ of integers modulo $n$ there is one ring structure for each $k\mid n$ determined by 
$1\cdot 1 = k$ (see \cite{Fle}). However, only $k=1$ gives rise to a ring with identity, which is necessarily the ring of integers modulo $n$.
We denote this metric by $d_{hom}$.

\subsubsection*{Invariant metrics for small groups}
For the trivial group $\{0\}$ anything is trivial. Namely, $\mathcal{P}_1(\{0\}) = \{ \{0\} \}$, hence the metric is the Hamming one and $\Gamma(\{0\},d_{Ham})=\Sym_1=\{0\}$. 
Next, we consider the smallest cyclic groups of prime order $\Z_2$, $\Z_3$ and $\Z_5$, which are also trivial. 
\begin{exam}
For the groups $\Z_2$ and $\Z_3$ there is only one unitary symmetric partition:
$\mathcal{P}_1(\Z_2) = \{ \{0\}, \{1\}\}$ and $\mathcal{P}_1(\Z_3) = \{ \{0\}, \{1,2\}\}$. In both cases the metric is the Hamming metric and the symmetry groups are $\Gamma(\Z_2,d_{Ham})=\Sym_2$ and $\Gamma(\Z_3,d_{Ham})=\Sym_3$. 
For $\Z_5$ there are just two unitary symmetric partitions:
$\mathcal{P}_1(\Z_5) = \{ \{0\}, \{1,2,3,4\}\}$ and $\mathcal{P}_2(\Z_5) = \{ \{0\}, \{1,4\}, \{2,3\}\}$. 
The associated metrics are respectively the Hamming and the Lee metrics and hence the corresponding symmetry groups are $\Gamma(\Z_5,d_{Ham})=\Sym_5$ and $\Gamma(\Z_5,d_{Lee})=\D_5$. 
\hfill $\lozenge$
\end{exam}

In the next two examples we determine all invariant metrics for a group $G$ of order $4$. 

\begin{exam}%[$G=\Z_4$]
Consider the group $\Z_4=\{0,1,2,3\}$. 
It is clear that $\Z_4$ has only the two unitary symmetric partitions:
$\mathcal{P}_1 = \{ \{ 0 \}, \{ 1,2,3 \} \}$ and $\mathcal{P}_2 = \{ \{ 0 \}, \{ 1,3 \}, \{ 2 \} \}$,
which correspond with the Hamming and the Lee metrics ($d_1=d_{Ham}$, $d_2=d_{Lee}$).
The corresponding partition graphs and symmetry groups are given in Figures \ref{fig1} and \ref{fig2}.
	\begin{figure}[H]
		\centering
		\begin{minipage}[t][][t]{0.49\textwidth}
	
	\begin{tikzpicture}[scale=0.6,transform shape,>=stealth',shorten
	>=1pt,auto,node distance=3cm, thick,main node/.style={circle,draw,font=\small}]
	\tikzstyle{every node}=[node distance = 3.25cm,bend angle    = 45,fill          =
	gray!30]
	
	\node[main node] (1) {0};
	\node[main node] (2) [above left of=1] {3};
	\node[main node] (3) [above right of=2] {2} ;
	\node[main node] (4) [above right of=1] {1};
	
	\path[every node/.style={font=\large,circle}]
	
	(1) edge node[right] {1} (4)
	edge node [right, pos=0.8] {1} (3)
	(2) edge node [left] {1} (1)
	edge node [below, pos=0.3] {1} (4)
	(3) edge node [left] {1} (2)
	(4) edge node [right] {1} (3);
	\end{tikzpicture}
	\centering
	\caption{$\mathcal{G}_1 = \mathcal{G}(\Z_4,d_{Ham})$}
	\label{fig1}
\end{minipage}
\hfill
		\begin{minipage}[t][][t]{0.49\textwidth}
			\begin{tikzpicture}[scale=0.6,transform shape,>=stealth',shorten
			>=1pt,auto,node distance=3cm, thick,main node/.style={circle,draw,font=\small}]
			\tikzstyle{every node}=[node distance = 3.25cm,bend angle=45,fill=gray!30]
			
			\node[main node] (1) {0};
			\node[main node] (2) [above left of=1] {3};
			\node[main node] (3) [above right of=2] {2} ;
			\node[main node] (4) [above right of=1] {1};         
			
			\path[every node/.style={font=\large,circle}]
			
			(1) edge node[right] {1} (4)
			edge[blue] node [right, pos=0.8] {2} (3)
			(2) edge node [left] {1} (1)
			edge[blue] node [below, pos=0.3] {2} (4)  
			(3) edge node [left] {1} (2) 
			(4) edge node [right] {1} (3);
			\end{tikzpicture}
			\centering
			\caption{$\mathcal{G}_2 = \mathcal{G}(\Z_4,d_{Lee})$} %, \\ $\Gamma (\Z_4,d_{Lee})\simeq \mathbb{D}_4$} 
			\label{fig2}
		\end{minipage}
\end{figure}

Although we know by Corollary \ref{Cay(Cn)} that the symmetry group for $(\Z_4,d_{Lee})$ is $\D_4$, we explain  
how it can be obtained using the graphs. By \eqref{G Aut} we have 
\begin{equation*} \label{capZ4} 
\Gamma(\Z_4,d_{Lee}) = \mathrm{Aut} \big(Cay(\Z_4,\{1,3\}) \big) \cap  \mathrm{Aut}\big(Cay(\Z_4,\{2\})\big).
\end{equation*}
Also, 
$\mathrm{Aut} (Cay(\Z_4,\{1,3\}))= \mathrm{Aut}(C_4) \simeq \D_4$ and 
$\mathrm{Aut}(Cay(\Z_4,\{2\})) = \mathrm{Aut}(2K_2) \simeq \Z_2 \wr \Z_2 \simeq \D_4$, 
by Corollary \ref{Cay(Cn)}.
The two automorphism groups of $Cay(\Z_4,\{1,3\})$ and $Cay(\Z_4,\{2\})$ are not only isomorphic, they are actually equal by \eqref{contentions}. Hence, we finally get $\Gamma(\Z_4,d_{Lee})=\D_4$. 

From the graphs, we see that we have the following weight functions on $\Z_4$:
\renewcommand{\arraystretch}{1}
\begin{table}[H] 
	\caption{Invariant metrics for $\Z_4$} \label{tabla Z4}
	\begin{tabular}{|c|ccc|c|c|c|c|c|} 
		\hline
		$\Z_4$ & $1$ & $2$ & $3$ & $s$ & chain & hom.\@ & metrics 	& $\Gamma$ \\ \hline
		$w_1$ &  $1$ & $1$ & $1$ & 1   & \ding{51} & \ding{55} & $d_{Ham}$  & $\Sym_4$\\ 
		$w_2$ &  $1$ & $2$ & $1$ & 2   & \ding{51} & \ding{51} & $d_{Lee}=d_{2,Lee}$, $d_{\Z_2}$, $d_{hom}$ &  $\D_4$  \\ 
		\hline
	\end{tabular}
\end{table}
We explain the metrics. Note that by \eqref{2-w chain} the chain metric $d_{\Z_2}$ given by
$\{0\} \subset \Z_2 \subset \Z_4$ is equivalent to $d_{Lee}$ but $d_{Lee} \ne d_{\Z_2}$.   
Also, by \eqref{ext lee met} we have $d_{2,Lee}=d_{Lee}$. 
The ring of integers modulo 4 has poset of principal ideals given by $\{0\} \preceq 2\Z_4 \preceq \Z_4$ where $\mu(\{0\},\{0\})=1$, $\mu(\{0\},2\Z_4)=-1$ and $\mu(\{0\},\Z_4)=0$. 
Now, by \eqref{hom weight}, we have 
$w_{hom}(1)=\lambda$, $w_{hom}(2)=2\lambda$, $w_{hom}(3)=\lambda$.
Finally, note that for $\lambda=1$ we have $d_{Lee}= d_{hom}$. 
\hfill $\lozenge$ 
\end{exam}

\begin{exam}
Consider the group $\Z_2 \times \Z_2$. It is straightforward to check that 
$\mathbb{Z}_2\times \mathbb{Z}_2$ has the following five unitary symmetric partitions: 
\begin{gather*}
\mathcal{P}_1 = \big\{\{ (0,0) \}, \{ (0,1), (1,0) , (1,1) \} \big\}, \qquad 
\mathcal{P}_2 = \big\{ \{ (0,0) \}, \{ (1,0), (0,1) \}, \{ (1,1) \} \big\}, \\
\mathcal{P}_3 = \big\{ \{ (0,0) \}, \{ (0,1), (1,1) \}, \{ (1,0) \} \big\}, \qquad  
\mathcal{P}_4 = \big\{ \{ (0,0) \}, \{ (1,0), (1,1) \}, \{ (0,1) \} \big\}, \\ 
\mathcal{P}_5 = \big\{ \{(0,0) \}, \{ (1,0) \}, \{ (0,1) \}, \{ (1,1) \} \big\}, \qquad 
\end{gather*}
Their associated partition graphs are respectively given by Figures \ref{1}--\ref{5}:
\begin{figure}[h!]
\centering
\begin{minipage}[t][][t]{0.325\textwidth}
\begin{tikzpicture}[scale=0.575,transform shape,>=stealth',shorten
>=1pt,auto,node distance=3cm, thick,main node/.style={circle,draw,font=\Large}]
\tikzstyle{every node}=[node distance = 3.25cm, bend angle=45,fill=gray!30]
  \node[main node,font=\fontsize{20}{144}] (1) {(0,0)};
  \node[main node,font=\fontsize{20}{144}] (2) [above left of=1] {(1,0)};
  \node[main node,font=\fontsize{20}{144}] (3) [above right of=2] {(1,1)} ;
  \node[main node,font=\fontsize{20}{144}] (4) [above right of=1] {(0,1)};
    
  \path[every node/.style={font=\large,circle}]
    (1) edge node[right] {1} (4)
  edge node [right, pos=0.8] {1} (3)
  (2) edge node [left] {1} (1)
  edge node [below, pos=0.3] {1} (4)
  (3) edge node [left] {1} (2)
  (4) edge node [right] {1} (3);
\end{tikzpicture}
\centering
\captionsetup{justification=centering,margin=0.1cm}
\caption{$\mathcal{G}_1=\mathcal{G}(\Z_2^2,d_1)$} 
\label{1}
\end{minipage}
\hfill
\begin{minipage}[t][][t]{0.33\textwidth}%
\begin{tikzpicture}[scale=0.575,transform shape,>=stealth',shorten
>=1pt,auto,node distance=3cm, thick,main node/.style={circle,draw,font=\small}]
\tikzstyle{every node}=[node distance = 3.25cm,bend angle    = 45,fill          =
gray!30]

  \node[main node,font=\fontsize{20}{144}] (1) {(0,0)};
  \node[main node,font=\fontsize{20}{144}] (2) [above left of=1] {(1,0)};
  \node[main node,font=\fontsize{20}{144}] (3) [above right of=2] {(1,1)} ;
  \node[main node,font=\fontsize{20}{144}] (4) [above right of=1] {(0,1)};
       
  \path[every node/.style={font=\large,circle}]
 
    (1) edge node[right] {1} (4)
       edge[blue] node [right, pos=0.8] {2} (3)
    (2) edge node [left] {1} (1)
        edge[blue] node [below, pos=0.3] {2} (4)
    (3) edge node [left] {1} (2)
    (4) edge node [right] {1} (3);
\end{tikzpicture}
\centering
\captionsetup{justification=centering,margin=0.1cm}
\caption{$\mathcal{G}_2=\mathcal{G}(\Z_2^2,d_2)$}
\label{2}
\end{minipage}
\hfill
\begin{minipage}[t][][t]{0.33\textwidth}
\begin{tikzpicture}[scale=0.575,transform shape,>=stealth',shorten
>=1pt,auto,node distance=3cm, thick,main node/.style={circle,draw,font=\small}]
\tikzstyle{every node}=[node distance = 3.25cm,bend angle    = 45,fill          =
gray!30]

  \node[main node,font=\fontsize{20}{144}] (1) {(0,0)};
  \node[main node,font=\fontsize{20}{144}] (2) [above left of=1] {(1,0)};
  \node[main node,font=\fontsize{20}{144}] (3) [above right of=2] {(1,1)} ;
  \node[main node,font=\fontsize{20}{144}] (4) [above right of=1] {(0,1)};
  \path[every node/.style={font=\large,circle}]
  
    (1) edge node[right] {1} (4)
       edge node [right, pos=0.8] {1} (3)
    (2) edge[blue] node [left] {2} (1)
        edge node [below, pos=0.3] {1} (4)
    (3) edge node [left] {1} (2)
    (4) edge[blue]  node [right] {2} (3);
\end{tikzpicture}
\centering
\captionsetup{justification=centering,margin=0.1cm}
\caption{$\mathcal{G}_3=\mathcal{G}(\Z_2^2,d_3)$}
\label{3}
\end{minipage}%
\medskip

\hfill
\begin{minipage}[c]{0.33\textwidth}
\begin{tikzpicture}[scale=0.575,transform shape,>=stealth',shorten
>=1pt,auto,node distance=3cm, thick,main node/.style={circle,draw,font=\small}]
\tikzstyle{every node}=[node distance = 3.25cm,bend angle= 45, fill=gray!30]

  \node[main node,font=\fontsize{20}{144}] (1) {(0,0)};
  \node[main node,font=\fontsize{20}{144}] (2) [above left of=1] {(1,0)};
  \node[main node,font=\fontsize{20}{144}] (3) [above right of=2] {(1,1)} ;
  \node[main node,font=\fontsize{20}{144}] (4) [above right of=1] {(0,1)};
          
  \path[every node/.style={font=\large,circle}]

    (1) edge[blue] node[right] {2} (4)
       edge node [right, pos=0.8] {1} (3)
    (2) edge node [left] {1} (1)
        edge node [below, pos=0.3] {1} (4)
    (3) edge[blue] node [left] {2} (2)
    (4) edge node [right] {1} (3);
\end{tikzpicture}
\centering
\captionsetup{justification=centering,margin=0.1cm}
\caption{$\mathcal{G}_4 = \mathcal{G}(\Z_2^2,d_4)$}
\label{4}
\end{minipage}
\hfill
\begin{minipage}[c]{0.33\textwidth}
\begin{tikzpicture}[scale=0.575,transform shape,>=stealth',shorten
>=1pt,auto,node distance=3cm, thick,main node/.style={circle,draw,font=\small}]
\tikzstyle{every node}=[node distance = 3.25cm, bend angle = 45,fill =gray!30]

  \node[main node,font=\fontsize{20}{144}] (1) {(0,0)};
  \node[main node,font=\fontsize{20}{144}] (2) [above left of=1] {(1,0)};
  \node[main node,font=\fontsize{20}{144}] (3) [above right of=2] {(1,1)} ;
  \node[main node,font=\fontsize{20}{144}] (4) [above right of=1] {(0,1)};
  \path[every node/.style={font=\large,circle}]
  
  (1) edge [red] node[right] {3} (4)
  edge[blue] node [right, pos=0.8] {2} (3)
  (2) edge node [left] {1} (1)
  edge[blue] node [below, pos=0.3] {2} (4)
  (3) edge [red] node [left] {3} (2) 
  (4) edge  node [right] {1} (3);
\end{tikzpicture}
\centering
\captionsetup{justification=centering,margin=0.1cm}
\caption{$\mathcal{G}_5 = \mathcal{G}(\Z_2^2,d_5)$} 
\label{5}
\end{minipage}
\hfill
\hfill
\end{figure}

We first compute the symmetry groups. It is obvious that $\Gamma(\Z_2^2,d_1) \simeq\Sym_4$. Also, proceeding as in the previous example, it is clear that the symmetry groups of $(\Z_2^2,d_2)$, $(\Z_2^2,d_3)$ and $(\Z_2^2,d_4)$ are all isomorphic to $\D_4$.
However, these groups are not equal. In fact, using the cycle notation of symmetric groups and the notations 
$a=(0,0)$, $b=(1,0)$, $c=(1,1)$ and $d=(0,1)$, if we consider the rotation of order four 
$\rho=(abcd)$ and the reflections 
$\tau_2=(bd)$, $\tau_3 = (ab)(cd)$ and $\tau_4 = (ad)(bc)$, then we have
$\Gamma(\Z_2^2,d_2) = \langle \rho, \tau_2 \rangle$, $\Gamma(\Z_2^2,d_3) = \langle \rho, \tau_3 \rangle$, and $\Gamma(\Z_2^2,d_4) = \langle \rho, \tau_4 \rangle$.
Finally, we compute the group associated to the Lee metric $d_5$. We have that 
\begin{equation} \label{capZ2XZ2}
\Gamma(\Z_2^2,d_5) = \mathrm{Aut}(Cay(\Z_2^2,P_1)) \cap \mathrm{Aut}(Cay(\Z_2^2,P_2)) \cap \mathrm{Aut}(Cay(\Z_2^2,P_3)).
\end{equation} 
Each $Cay(\Z_2^2,P_i)$, $i=1,2,3$, is the disjoint union of two copies of $K_2$. Hence, 
$$\mathrm{Aut}(Cay(\Z_2^2,P_i))=\mathrm{Aut}(2K_2) = \Z_2 \wr \Z_2 \simeq \D_4$$ for $i=1,2,3$. 
However, these automorphisms groups are all different and we now look at the intersections in \eqref{capZ2XZ2} more carefully. 
Note that 
\begin{align*}
\mathrm{Aut}(Cay(\Z_2^2,P_1)) = \{ id, (ab), (cd), (ab)(cd), (ac)(bd), (ad)(bc), (acbd), (adbc) \}, \\
\mathrm{Aut}(Cay(\Z_2^2,P_2)) = \{ id, (ac), (bd), (ab)(cd), (ac)(bd), (ad)(bc), (abcd), (adbc) \}, \\ 
\mathrm{Aut}(Cay(\Z_2^2,P_3)) = \{ id, (ad), (bc), (ab)(cd), (ac)(bd), (ad)(bc), (abdc), (acdb) \},
\end{align*}
and hence we clearly have 
$$\Gamma(\Z_2^2,d_5) = \bigcap_{1\le i \le 3} \mathrm{Aut}(Cay(\Z_2^2,P_i)) = \{ id, (ab)(cd), (ac)(bd), (bc)(ad) \}.$$
Therefore, by \eqref{capZ2XZ2} we finally obtain that
$\Gamma(\Z_2^2,d_5) \simeq \Z_2 \times \Z_2$, as we wanted to see.

From the above graphs, we see that we have the following weight functions on $\Z_2^2$: 
\renewcommand{\arraystretch}{1.1}
\begin{table}[H] 
	\caption{Invariant metrics for $\Z_2 \times \Z_2$} 
	\label{tabZ2xZ2}
	\begin{tabular}{|c|ccc|c|c|c|c|c|c|} 
		\hline
$\Z_2^2$ & $(1,0)$ & $(1,1)$ & $(0,1)$ & $s$ & poset & chain & hom.\@  & metrics & $\Gamma$ \\ \hline
$w_1$ &  1 & 1 & 1 & $1$ & \ding{51} & \ding{51} & \ding{51} & $d_{Ham}$, $d_{hom}^{\ff_4}$ & $\Sym_4$ \\ 
$w_2$ &  1 & 2 & 2 & $2$ & \ding{51} & \ding{51} & \ding{51} & $d_{1 \preceq 2}$, $d_{\Z_2, i_1}$, $d_{hom}^{R_1}$ & $\D_4$ \\ 
$w_3$ &  2 & 2 & 1 & $2$ & \ding{51} & \ding{51} & \ding{51} & $d_{2 \preceq 1}$, $d_{\Z_2, i_2}$, $d_{hom}^{R_2}$ & $\D_4$ \\ 
$w_4$ &  2 & 1 & 2 & $2$ & \ding{51} & \ding{51} & \ding{51} & $d_{Ham}^2$,$d_{\Z_2, i_1 \times i_2}$, $d_{hom}^{R_3}$ & $\D_4$ \\ 
$w_5$ &  1 & 2 & 3 & $3$ & \ding{55} & \ding{55} & \ding{55} & $d_{Lee}$ & $\D_4$ \\ 
		\hline
	\end{tabular}
\end{table}
The metric $d_1$ is the Hamming metric. We use the notation $d_{Lee}$ for $d_5$ since it is associated to the Lee (i.e.\@ finest unitary symmetric) partition. 
We have that $d_2$ and $d_3$ are poset metrics with respect to the posets on the set of coordinates of $\Z_2\times\Z_2$, namely $\{1,2\}$, determined by $1 \preceq 2$ and  $2 \preceq 1$, respectively. On the other hand, the metric $d_4$ is the poset metric with respect to the antichain on $\{1,2\}$, that gives rise to the product Hamming metric $d_{Ham}^2$. 
The metrics $d_2, d_3$ and $d_4$ are chain metrics with respect to the chains $\{0\} \subset \Z_2 \subset \Z_2 \times \Z_2$
given by the embeddings of $\Z_2 \hookrightarrow \Z_2 \times \Z_2$ respectively defined by 
$$i_1: x \mapsto (x,0), \qquad i_2 : x \mapsto (0,x), \qquad \text{and} \qquad i_1 \times i_2 : x \mapsto (x,x).$$ 
Finally, note that the Hamming metric $d_{Ham}$ is equivalent to the homegeneous metric $d_{hom}^{\ff_4}$ on the finite field of four elements $\ff_4$, while the metrics $d_2, d_3$ and $d_4$ can be respectively obtained as homogeneous weights on the finite rings $R_1, R_2$ and $R_3$ given by the conditions $a \cdot a=a$, $b \cdot b=0$ and $a \cdot b=b$ where $a=(1,0)$, $b=(0,1)$ for $R_1$, $a=(0,1)$, $b=(1,0)$ for $R_2$ and $a=(1,0)$, $b=(1,1)$ for $R_3$.
\hfill $\lozenge$
\end{exam}

\goodbreak 

In the next two examples we study the invariant metrics for the groups of order $6$. 

\begin{exam}
\label{MetricasZ_6}
Consider now the group $\Z_6$. It is easy to check that 
\begin{gather*}
\mathcal{P}_1 = \big\{ \{0\}, \{1,2,3,4,5\}\big\}, \qquad 
\mathcal{P}_2 = \big\{ \{0\}, \{1,2,4,5\}, \{3\}\big\}, \qquad 
\mathcal{P}_3 = \big\{ \{0\}, \{1,3,5\}, \{2,4\}\big\}, \\ 
\mathcal{P}_4 = \big\{ \{0\}, \{1,5\}, \{2,4,3\}\big\}, \qquad 
\mathcal{P}_5 = \big\{ \{0\}, \{1,5\}, \{2,4\}, \{3\} \big\},
\end{gather*}
are all the unitary symmetric partitions of $\mathbb{Z}_6$.
From these partitions, in Figures \ref{8}--\ref{12} we give the corresponding partition graphs: 
%(black=1, blue=2, red=3):
\begin{figure}[H]
\centering
\begin{minipage}[t][][t]{0.33\textwidth}

\begin{tikzpicture}[scale=.575,transform shape,>=stealth',shorten
>=1pt,auto,node distance=3cm, thick,main node/.style={circle,draw,font=\small}]
\tikzstyle{every node}=[node distance = 3cm,bend angle    = 45,fill = gray!30]

  \node[main node] (1) at (-2,1.5) {1};
  \node[main node] (2) at (-2,4) {2};
  \node[main node] (3) at (0,5.5) {3};
  \node[main node] (4) at (2,4) {4};
  \node[main node] (5) at (2,1.5) {5};
  \node[main node] (0) at (0,0) {0};

  \path[every node/.style={font=\large,circle}]
    (1) edge node[above,left,pos=0.4] {1} (4)
       edge node [left, pos=0.6] {1} (3)
       edge  node[above,pos=0.4] {1} (5)
    (2) edge node [left] {1} (1)
        edge node [below, pos=0.6] {1} (4)
        edge  node [left,pos=0.6] {1} (0)
        edge node [left] {1} (3)
        edge node[right,pos=0.6] {1} (5)
    (3) edge node [right] {1} (4) 
    edge  node [right, pos=0.4] {1} (5)
    edge  node [left,pos=0.2] {1} (0)
    (4) edge  node [right] {1} (5)
    edge  node [right,pos=0.6] {1} (0)
    (5) edge  node [right] {1} (0)
    (0) edge  node [left] {1} (1);
\end{tikzpicture}
\centering
\captionsetup{justification=centering,margin=0.1cm}
\caption{$\mathcal{G}_1 = \mathcal{G}(\Z_6,d_1)$} \label{8}
\end{minipage}
\hfill
\begin{minipage}[t][][t]{0.33\textwidth}
\begin{tikzpicture}[scale=.575,transform shape,>=stealth',shorten
>=1pt,auto,node distance=3cm, thick,main node/.style={circle,draw,font=\small}]
\tikzstyle{every node}=[node distance = 3cm,bend angle    = 45,fill          = gray!30]

  \node[main node] (1) at (-2,1.5) {1};
  \node[main node] (2) at (-2,4) {2};
  \node[main node] (3) at (0,5.5) {3};
  \node[main node] (4) at (2,4) {4};
  \node[main node] (5) at (2,1.5) {5};
  \node[main node] (0) at (0,0) {0};

  \path[every node/.style={font=\large,circle}]
    (1) edge [blue] node[left,pos=0.4] {2} (4)
       edge node [left, pos=0.6] {1} (3)
       edge node[above,pos=0.4] {1} (5)
    (2) edge node [left] {1} (1)
        edge node [below, pos=0.6] {1} (4)
        edge node [left,pos=0.6] {1} (0)
        edge node [left] {1} (3)
        edge [blue] node[right,pos=0.6] {2} (5)
    (3) edge node [right] {1} (4) 
    edge node [right, pos=0.4] {1} (5)
    edge [blue] node [left,pos=0.2] {2} (0)
    (4) edge  node [right] {1} (5)
    edge node [right,pos=0.6] {1} (0)
    (5) edge  node [right] {1} (0)
    (0) edge  node [left] {1} (1);
\end{tikzpicture}
\centering
\captionsetup{justification=centering,margin=0.1cm}
\caption{$\mathcal{G}_2 = \mathcal{G}(\Z_6,d_2)$} \label{9}
\end{minipage}%
\hfill
\begin{minipage}[t][][t]{0.33\textwidth}
\centering
\begin{tikzpicture}[scale=.575,transform shape,>=stealth',shorten
>=1pt,auto,node distance=3cm, thick,main node/.style={circle,draw,font=\small}]
\tikzstyle{every node}=[node distance = 3cm,bend angle    = 45,fill          = gray!30]

  \node[main node] (1) at (-2,1.5) {1};
  \node[main node] (2) at (-2,4) {2};
  \node[main node] (3) at (0,5.5) {3};
  \node[main node] (4) at (2,4) {4};
  \node[main node] (5) at (2,1.5) {5};
  \node[main node] (0) at (0,0) {0};
      
 \path[every node/.style={font=\large,circle}]
    (1) edge node[left,pos=0.4] {1} (4)
       edge [blue]  node [left, pos=0.6] {2} (3)
       edge [blue]  node[above,pos=0.4] {2} (5)
    (2) edge node [left] {1} (1)
        edge [blue]  node [below, pos=0.6] {2} (4)
        edge [blue]  node [left,pos=0.6] {2} (0)
        edge node [left] {1} (3)
        edge node[right,pos=0.6] {1} (5)
    (3) edge node [right] {1} (4) 
    edge [blue]  node [right, pos=0.4] {2} (5)
    edge node [left,pos=0.2] {1} (0)
    (4) edge  node [right] {1} (5)
    edge [blue]  node [right,pos=0.6] {2} (0)
    (5) edge  node [right] {1} (0)
    (0) edge  node [left] {1} (1);
\end{tikzpicture}
\centering
\captionsetup{justification=centering,margin=0.1cm}
\caption{$\mathcal{G}_3 = \mathcal{G}(\Z_6,d_3)$} \label{10}
\end{minipage}%
\hfill
\end{figure}

\begin{figure}[H]
\centering
\begin{minipage}[t][][t]{0.45\textwidth}%
\begin{tikzpicture}[scale=.575,transform shape,>=stealth',shorten
>=1pt,auto,node distance=3cm, thick,main node/.style={circle,draw,font=\small}]
\tikzstyle{every node}=[node distance = 3cm,bend angle    = 45,fill          = gray!30]

  \node[main node] (1) at (-2,1.5) {1};
  \node[main node] (2) at (-2,4) {2};
  \node[main node] (3) at (0,5.5) {3};
  \node[main node] (4) at (2,4) {4};
  \node[main node] (5) at (2,1.5) {5};
  \node[main node] (0) at (0,0) {0};
             
  \path[every node/.style={font=\large,circle}]
   (1) edge [blue]  node[left,pos=0.4] {2} (4)
       edge [blue]  node [left, pos=0.6] {2} (3)
       edge [blue]  node[above,pos=0.4] {2} (5)
    (2) edge node [left] {1} (1)
        edge [blue]  node [below, pos=0.6] {2} (4)
        edge [blue]  node [left,pos=0.6] {2} (0)
        edge node [left] {1} (3)
        edge [blue]  node[right,pos=0.6] {2} (5)
    (3) edge node [right] {1} (4) 
    edge [blue]  node [right, pos=0.4] {2} (5)
    edge [blue]  node [left,pos=0.2] {2} (0)
    (4) edge  node [right] {1} (5)
    edge [blue]  node [right,pos=0.6] {2} (0)
    (5) edge  node [right] {1} (0)
    (0) edge  node [left] {1} (1);
\end{tikzpicture}
\centering 
\captionsetup{justification=centering,margin=0.1cm}
\caption{$\mathcal{G}_4 = \mathcal{G}(\Z_6,d_4)$} \label{11}
\end{minipage} 
\begin{minipage}[t][][t]{0.45\textwidth}
\begin{tikzpicture}[scale=.575,transform shape,>=stealth',shorten
>=1pt,auto,node distance=3cm, thick,main node/.style={circle,draw,font=\small}]
\tikzstyle{every node}=[node distance = 3cm,bend angle    = 45,fill          = gray!30]

  \node[main node] (1) at (-2,1.5) {1};
  \node[main node] (2) at (-2,4) {2};
  \node[main node] (3) at (0,5.5) {3};
  \node[main node] (4) at (2,4) {4};
  \node[main node] (5) at (2,1.5) {5};
  \node[main node] (0) at (0,0) {0};

  \path[every node/.style={font=\large,circle}]
    (1) edge [red] node[left,pos=0.4] {3} (4)
       edge [blue]  node [left, pos=0.6] {2} (3)
       edge [blue]  node[above,pos=0.4] {2} (5)
    (2) edge node [left] {1} (1)
        edge[blue]  node [below, pos=0.6] {2} (4)
        edge [blue]  node [left,pos=0.6] {2} (0)
        edge node [left] {1} (3)
        edge [red] node[right,pos=0.6] {3} (5)
    (3) edge node [right] {1} (4) 
    edge [blue]  node [right, pos=0.4] {2} (5)
    edge [red] node [left,pos=0.2] {3} (0)
    (4) edge  node [right] {1} (5)
    edge [blue]  node [right,pos=0.6] {2} (0)
    (5) edge  node [right] {1} (0)
    (0) edge  node [left] {1} (1);
\end{tikzpicture}
%}
\centering
\captionsetup{justification=centering,margin=0.1cm}
\caption{$\mathcal{G}_5 = \mathcal{G}(\Z_6,d_5)$} \label{12}
\end{minipage}%
\hfill
\end{figure}
\vspace{-1em}

We know compute the symmetry groups. We will use the previous graphs together with Theorem \ref{capcays} and Corollary \ref{Cay(Cn)}. One must interpret the graphs as unions of the subgraphs given by each part of the partition (i.e.\@ each color); for instance for the Lee metric $d_5$ we have the following decomposition of the graph of distances into simple Cayley graphs:

\begin{center}
\begin{tikzpicture}[scale=.55,transform shape,>=stealth',shorten
>=1pt,auto,node distance=3cm, thick,main node/.style={circle,draw,font=\tiny}]
\tikzstyle{every node}=[node distance = 3cm,bend angle    = 45,fill          = gray!30]

\node[main node] (1) at (-2,1.5) {1};
\node[main node] (2) at (-2,4) {2};
\node[main node] (3) at (0,5.5) {3};
\node[main node] (4) at (2,4) {4};
\node[main node] (5) at (2,1.5) {5};
\node[main node] (0) at (0,0) {0};
\node[main node,draw=none,fill=none,scale=2.75] (G) at (0,-1.4)
{$Cay(\Z_6,P_{Lee})$};

\node[main node,draw=none,fill=none,scale=5] (=) at (3.2,2.8) {$=$};

\node[main node,right = 6 of 1] (1a)  {1};
\node[main node,right = 6 of 2] (2a)  {2};
\node[main node,right = 6 of 3] (3a)  {3};
\node[main node,right = 6 of 4] (4a)  {4};
\node[main node,right = 6 of 5] (5a)  {5};
\node[main node,right = 6 of 0] (0a)  {0};
\node[main node,draw=none,fill=none,scale=2.75] (G1) at (6.6,-1.4)
{$Cay(\Z_6,\{1,5\})$};

\node[main node,draw=none,fill=none,scale=5,right = 3.4 of {=}] (u1)
{$\bigcup$};

\node[main node,right = 6 of 1a] (1b)  {1};
\node[main node,right = 6 of 2a] (2b)  {2};
\node[main node,right = 6 of 3a] (3b)  {3};
\node[main node,right = 6 of 4a] (4b)  {4};
\node[main node,right = 6 of 5a] (5b)  {5};
\node[main node,right = 6 of 0a] (0b)  {0};
\node[main node,draw=none,fill=none,scale=2.75] (G2) at (13.3,-1.4)
{$Cay(\Z_6,\{2,4\})$};

\node[main node,draw=none,fill=none,scale=5,right = 3.2 of u1] (u2)
{$\bigcup$};

\node[main node,right = 6 of 1b] (1c)  {1};
\node[main node,right = 6 of 2b] (2c)  {2};
\node[main node,right = 6 of 3b] (3c)  {3};
\node[main node,right = 6 of 4b] (4c)  {4};
\node[main node,right = 6 of 5b] (5c)  {5};
\node[main node,right = 6 of 0b] (0c)  {0};
\node[main node,draw=none,fill=none,scale=2.75] (G3) at (19.6,-1.4)
{$Cay(\Z_6,\{3\})$};

\path[every node/.style={font=\large,circle}]
(1) edge [red] node[left=5pt,pos=0.4] {3} (4)
edge [blue]  node [left, pos=0.55] {2} (3)
edge [blue]  node[above,pos=0.4] {2} (5)
(2) edge node [left] {1} (1)
edge[blue]  node [below, pos=0.6] {2} (4)
edge [blue]  node [left,pos=0.55] {2} (0)
edge node [left] {1} (3)
edge [red] node[right=5pt,pos=0.6] {3} (5)
(3) edge node [right] {1} (4) 
edge [blue]  node [right, pos=0.45] {2} (5)
edge [red] node [left,pos=0.2] {3} (0)

(4) edge  node [right] {1} (5)
edge [blue]  node [right,pos=0.55] {2} (0)
(5) edge  node [right] {1} (0)
(0) edge  node [left] {1} (1)

(2a) edge node [left] {1} (1a)
edge node [left] {1} (3a)
(3a) edge node [right] {1} (4a) 
(4a) edge  node [right] {1} (5a)
(5a) edge  node [right] {1} (0a)
(0a) edge  node [left] {1} (1a)

(1b) edge [blue]  node [left, pos=0.55] {2} (3b)
edge [blue]  node[above,pos=0.4] {2} (5b)
(2b) edge[blue]  node [below, pos=0.6] {2} (4b)
edge [blue]  node [left,pos=0.55] {2} (0b)
(3b)  edge [blue]  node [right, pos=0.45] {2} (5b)
(4b) edge [blue]  node [right,pos=0.55] {2} (0b)
(1c) edge [red] node[left=5pt,pos=0.4] {3} (4c)
(2c) edge [red] node[right=5pt,pos=0.6] {3} (5c)
(3c) edge [red] node [left,pos=0.2] {3} (0c);
\end{tikzpicture}
\end{center}

Now, by \eqref{G Aut} and \eqref{AutGPi}
we have that  
$$\begin{aligned}
& \Gamma(\Z_6,d_1) = \mathrm{Aut} (K_6) \simeq \Sym_6, \\
& \Gamma(\Z_6,d_2) = \mathrm{Aut} (C_6 \cup 2C_3) \cap \mathrm{Aut} (3K_2) = \mathrm{Aut} (3K_2) \simeq \Sym_2 \wr \Sym_3, \\
& \Gamma(\Z_6,d_3) = \mathrm{Aut} (C_6 \cup 3K_2) \cap \mathrm{Aut} (2C_3) = \mathrm{Aut} (2C_3) \simeq \Sym_3 \wr \Sym_2, \\
& \Gamma(\Z_6,d_4) = \mathrm{Aut} (C_6) \cap \mathrm{Aut} (C_6 \cup 3K_2) = \mathrm{Aut} (C_6) \simeq \D_6. \\
& \Gamma(\Z_6,d_5) = \mathrm{Aut} (C_6) \cap \mathrm{Aut} (2C_3) \cap \mathrm{Aut} (3K_2) \simeq \Di_6 \cap (\Sym_3 \wr \Sym_2) \cap 
(\Sym_2 \wr \Sym_3) \simeq \D_6.
\end{aligned}$$
We now explain these isomorphisms. The first one is clear. For the second, third and fourth, note that the group of symmetries is the intersection of the automorphism group of a graph with the auromorphism group of the complementary graph (see Figures \ref{9}--\ref{11}), which are equal. Hence, it is enough to compute the easiest case (second identities). For the last group we cannot use this trick, since we have 3 graphs.   
However, one can check that every automorphism of $Cay(\Z_6,\{1,5\}) \simeq \D_6$ is also an automorphism of
$Cay(\Z_6,\{2,4\}) \simeq \Sym_3 \wr \Sym_2$ and of $Cay(\Z_6,\{3\})\simeq \Sym_2 \wr \Sym_3$, and hence we finally get
$ \Gamma(\Z_6,d_5)\simeq\Di_6$.

From the graphs, we see that we have the following weight functions on $\Z_6$: 
\begin{table}[H]
\caption{Invariant metrics for $\Z_6$} \label{tabla Z6}
\begin{equation*}  \label{tabZ6}
\begin{tabular}{|c|ccccc|c|c|c|c|c|c|c|}
\hline
$\Z_6$& $1$ & $2$ & $3$ & $4$ & $5$ & $s$ & poset & chain & hom.\@ & known metrics  & $\Gamma$ & order \\ \hline
 $w_1$ & $1$ & $1$ & $1$ & $1$ & $1$ & 1 & \ding{51} & \ding{51} & \ding{51} & $d_{Ham}$ & $\Sym_6$& 720 \\
 $w_2$ & $1$ & $1$ & $2$ & $1$ & $1$ & 2 & \ding{51} & \ding{51} & \ding{55} & $d_{1\preceq 2}$, $d_{\Z_3}$ & $\Sym_2 \wr \Sym_3$ &48\\ 
 $w_3$ & $1$ & $2$ & $1$ & $2$ & $1$ & 2 & \ding{51} & \ding{51} & \ding{55} & $d_{2\preceq 1}$, $d_{\Z_2}$ & $\Sym_3 \wr \Sym_2$ & 72 \\ 
 $w_4$ & $1$ & $2$ & $2$ & $2$ & $1$ & 2 & \ding{51} & \ding{55} & \ding{55} & $d_{3,Lee}$, $d_{Ham}^2$ & $\D_6$ & 12\\
 $w_5$ & $1$ & $2$ & $3$ & $2$ & $1$ & 3 & \ding{55} & \ding{55} & \ding{51} & $d_{Lee}=d_{2,Lee}$, $d_{hom}$ & $\D_6$ & 12 \\ 
\hline
\end{tabular}
\end{equation*}
\end{table}

The metrics $d_1$ and $d_5$ are Lee and Hamming respectively. 
For poset metrics we think $\Z_6$ as the product $\Z_2 \times \Z_3$ (i.e.\@ 2 coordinates) via the isomorphism $1\mapsto (1,1)$, since otherwise we get the Hamming metric. We have that $d_2, d_3, d_4$ are poset metrics with respect to the posets on the set of coordinates $\{1,2\}$, given by $1 \preceq 2$ and $2 \preceq 1$ and the trivial poset (2-antichain) respectively. 
The metrics $d_3$ and $d_4$ are chain metrics with respect to the chains $\{0\}\subset \Z_2 \subset \Z_6$ and $\{0\}\subset \Z_3 \subset \Z_6$ given by the embeddings $x \mapsto 2x$ and $x \mapsto 3x$, respectively.
The ring of integers modulo $6$ has the poset of principal ideals given by $\{0\} \preceq 2\Z_6 \preceq \Z_6$ and $\{0\} \preceq 3\Z_6 \preceq \Z_6$ where $\mu(\{0\},\{0\})=1$, $\mu(\{0\},2\Z_6)=-1$, $\mu(\{0\},3\Z_6)=-1$ and $\mu(\{0\},\Z_6)=1$. 
By \eqref{hom weight}, we have 
$w_{hom}(1)=w_{hom}(5)=\tfrac 12 \lambda$, $w_{hom}(2)=w_{hom}(4)=\frac 32 \lambda$ and $w_{hom}(3)=2\lambda$.
\hfill $\lozenge$ 
\end{exam}

\begin{exam} \label{Sym3}
Consider the group $\Sym_3=\{id, (12), (13), (23), (123), (132)\}$.
For simplicity we will write $ij$ and $ijk$ instead of $(ij)$ and $(ijk)$.
By inspection one can see that $\Sym_3$ has 15 unitary symmetric partitions given by 
{\small \begin{alignat*}{3}
&\mathcal{P}_1 : \{12, 13, 23, 123, 132\}, 	\quad && \mathcal{P}_2: \{12, 13, 23\}, \{123, 132\}, \quad && 
\mathcal{P}_3 : \{12\}, \{13, 23, 123, 132\}, \\
& \mathcal{P}_4 : \{13\}, \{12, 23, 123, 132\}, \quad && \mathcal{P}_5 : \{23\}, \{12, 13, 123, 132\},	
\quad && \mathcal{P}_6 : \{12, 13\}, \{23, 123, 132\}, \\
&\mathcal{P}_7 : \{12, 23\}, \{13, 123, 132\}, \quad && \mathcal{P}_8 : \{13, 23\}, \{12, 123, 132\}, 
\quad && \mathcal{P}_9 : \{12\}, \{13, 23\}, \{123, 132\}, \\
& \mathcal{P}_{10} : \{13\}, \{12, 23\}, \{123, 132\}, \quad && \mathcal{P}_{11} : \{23\}, \{12, 13\},  \{123, 132\}, 
\quad && \mathcal{P}_{12} : \{12\}, \{13\}, \{23, 123, 132\}, \\	
&\mathcal{P}_{13} : \{12\}, \{23\}, \{13, 123, 132\}, \quad && \mathcal{P}_{14} : \{13\}, \{23\}, \{12, 123, 132\}, 	
\quad && \mathcal{P}_{15} :\{12\}, \{13\}, \{23\}, \{123, 132\}.  
\end{alignat*}}
It is clear from this list that there are only two conjugate partitions (the first two ones) and hence $\Sym_3$ has only 2 bi-invariant metrics. The partition graphs of the metrics (except for the Hamming one) are given in Figures \ref{13}--\ref{18}. %{\blue(black=1, blue=2, red=3, green-dashed=4)}. 

\begin{figure}[H]
	\centering
	\begin{minipage}[t][][t]{0.33\textwidth}
		
		\begin{tikzpicture}[scale=.575,transform shape,>=stealth',shorten
			>=1pt,auto,node distance=3cm, thick,main node/.style={circle,draw,font=\small}]
			\tikzstyle{every node}=[node distance = 3cm,minimum size=9mm,bend angle    = 45,fill = gray!30]
			
			\node[main node] (1) at (-2,1.5) {12};
			\node[main node] (2) at (-2,4) {123};
			\node[main node] (3) at (0,5.5) {13};
			\node[main node] (4) at (2,4) {132};
			\node[main node] (5) at (2,1.5) {23};
			\node[main node] (0) at (0,0) {id};
			
			\path[every node/.style={font=\large,circle}]
			(1) edge node[left,pos=0.4] {1} (4)
			edge[blue] node [left, pos=0.6] {2} (3)
			edge[blue]  node[above, pos=0.4] {2} (5)
			(2) edge node [left] {1} (1)
			edge[blue] node [below, pos=0.6] {2} (4)
			edge[blue]  node [left,pos=0.6] {2} (0)
			edge node [left] {1} (3)
			edge node[right,pos=0.6] {1} (5)
			(3) edge node [right] {1} (4) 
			edge[blue]  node [right, pos=0.4] {2} (5)
			edge  node [left,pos=0.2] {1} (0)
			(4) edge  node [right] {1} (5)
			edge[blue]  node [right,pos=0.6] {2} (0)
			(5) edge  node [right] {1} (0)
			(0) edge  node [left] {1} (1);
		\end{tikzpicture}
		\centering
		\captionsetup{justification=centering,margin=0.1cm}
		\caption{$\mathcal{G}_1 = \mathcal{G}(\Sym_3,d_2)$} \label{13}
	\end{minipage}
	\hfill
	\begin{minipage}[t][][t]{0.33\textwidth}
		\begin{tikzpicture}[scale=.575,transform shape,>=stealth',shorten
			>=1pt,auto,node distance=3cm, thick,main node/.style={circle,draw,font=\small}]
			\tikzstyle{every node}=[node distance = 3cm,minimum size=9mm,bend angle    = 45,fill          = gray!30]
			
			\node[main node] (1) at (-2,1.5) {12};
			\node[main node] (2) at (-2,4) {123};
			\node[main node] (3) at (0,5.5) {13};
			\node[main node] (4) at (2,4) {132};
			\node[main node] (5) at (2,1.5) {23};
			\node[main node] (0) at (0,0) {id};
			
			\path[every node/.style={font=\large,circle}]
			(1) edge [blue] node[left,pos=0.4] {2} (4)
			edge [blue]  node [left, pos=0.6] {2} (3)
			edge [blue]  node[above,pos=0.4] {2} (5)
			(2) edge [blue] node [left] {2} (1)
			edge [blue]  node [below, pos=0.6] {2} (4)
			edge [blue]  node [left,pos=0.6] {2} (0)
			edge [blue] node [left] {2} (3)
			edge node[right,pos=0.6] {1} (5)
			(3) edge node [right] {1} (4) 
			edge [blue]  node [right, pos=0.4] {2} (5)
			edge [blue] node [left,pos=0.2] {2} (0)
			(4) edge [blue]  node [right] {2} (5)
			edge [blue]  node [right,pos=0.6] {2} (0)
			(5) edge node [right] {2} (0)
			(0) edge  node [left] {1} (1);
		\end{tikzpicture}
		\centering
		\captionsetup{justification=centering,margin=0.1cm}
		\caption{$\mathcal{G}_2 = \mathcal{G}(\Sym_3,d_3)$} \label{14}
	\end{minipage}%
	\hfill
	\begin{minipage}[t][][t]{0.33\textwidth}
		\centering
		\begin{tikzpicture}[scale=.575,transform shape,>=stealth',shorten
			>=1pt,auto,node distance=3cm, thick,main node/.style={circle,draw,font=\small}]
			\tikzstyle{every node}=[node distance = 3cm,minimum size=9mm,bend angle    = 45,fill          = gray!30]
			
			\node[main node] (1) at (-2,1.5) {12};
			\node[main node] (2) at (-2,4) {123};
			\node[main node] (3) at (0,5.5) {13};
			\node[main node] (4) at (2,4) {132};
			\node[main node] (5) at (2,1.5) {23};
			\node[main node] (0) at (0,0) {id};
			
			\path[every node/.style={font=\large,circle}]
			(1) edge node[left,pos=0.4] {1} (4)
			edge [blue]  node [left, pos=0.6] {2} (3)
			edge   node[above,pos=0.4] {2} (5)
			(2) edge [blue] node [left] {2} (1)
			edge [blue]  node [below, pos=0.6] {2} (4)
			edge [blue]  node [left,pos=0.6] {2} (0)
			edge  node [left] {1} (3)
			edge node[right,pos=0.6] {1} (5)
			(3) edge node [right] {1} (4) 
			edge [blue]  node [right, pos=0.4] {2} (5)
			edge [blue] node [left,pos=0.2] {2} (0)
			(4) edge [blue]  node [right] {2} (5)
			edge [blue]  node [right,pos=0.6] {2} (0)
			(5) edge node [right] {1} (0)
			(0) edge  node [left] {1} (1);
		\end{tikzpicture}
		\centering
		\captionsetup{justification=centering,margin=0.1cm}
		\caption{$\mathcal{G}_3 = \mathcal{G}(\Sym_3,d_7)$} \label{15}
	\end{minipage}%
	\hfill
\end{figure}
\begin{figure}[H]
	\centering
	\begin{minipage}[t][][t]{0.33\textwidth}%
		\begin{tikzpicture}[scale=.575,transform shape,>=stealth',shorten
			>=1pt,auto,node distance=3cm, thick,main node/.style={circle,draw,font=\small}]
			\tikzstyle{every node}=[node distance = 3cm,minimum size=9mm,bend angle    = 45,fill          = gray!30]
			
			\node[main node] (1) at (-2,1.5) {12};
			\node[main node] (2) at (-2,4) {123};
			\node[main node] (3) at (0,5.5) {13};
			\node[main node] (4) at (2,4) {132};
			\node[main node] (5) at (2,1.5) {23};
			\node[main node] (0) at (0,0) {id};
			
			\path[every node/.style={font=\large,circle}]
			(1) edge [blue] node[left,pos=0.4] {2} (4)
			edge [red]  node [left, pos=0.6] {3} (3)
			edge [red]  node[above,pos=0.4] {3} (5)
			(2) edge node [left] {1} (1)
			edge [red]  node [below, pos=0.6] {3} (4)
			edge [red]  node [left,pos=0.6] {3} (0)
			edge [blue] node [left] {2} (3)
			edge [blue] node[right,pos=0.6] {2} (5)
			(3) edge [blue] node [right] {2} (4) 
			edge [red]  node [right, pos=0.4] {3} (5)
			edge  node [left,pos=0.2] {1} (0)
			(4) edge  node [right] {1} (5)
			edge [red]  node [right,pos=0.6] {3} (0)
			(5) edge [blue] node [right] {2} (0)
			(0) edge [blue] node [left] {2} (1);
		\end{tikzpicture}
		\centering 
		\captionsetup{justification=centering,margin=0.1cm}
		\caption{$\mathcal{G}_4 = \mathcal{G}(\Sym_3,d_{10})$} \label{16}
	\end{minipage} 
	\begin{minipage}[t][][t]{0.33\textwidth}
		\begin{tikzpicture}[scale=.575,transform shape,>=stealth',shorten
			>=1pt,auto,node distance=3cm, thick,main node/.style={circle,draw,font=\small}]
			\tikzstyle{every node}=[node distance = 3cm,minimum size=9mm,bend angle    = 45,fill          = gray!30]
			
			\node[main node] (1) at (-2,1.5) {12};
			\node[main node] (2) at (-2,4) {123};
			\node[main node] (3) at (0,5.5) {13};
			\node[main node] (4) at (2,4) {132};
			\node[main node] (5) at (2,1.5) {23};
			\node[main node] (0) at (0,0) {id};
			
			\path[every node/.style={font=\large,circle}]
			(1) edge [blue] node[left,pos=0.4] {2} (4) 
			edge [red]  node [left, pos=0.6] {3} (3)
			edge [red]  node[above,pos=0.4] {3} (5)
			(2) edge [red] node [left] {1} (1)
			edge [red]  node [below, pos=0.6] {3} (4)
			edge [red]  node [left,pos=0.6] {3} (0)
			edge [blue] node [left] {2} (3) 
			edge node[right,pos=0.6] {1} (5)
			(3) edge node [right] {1} (4) 
			edge [red]  node [right, pos=0.4] {3} (5)
			edge [red] node [left,pos=0.2] {3} (0)
			(4) edge [red]  node [right] {3} (5)
			edge [red]  node [right,pos=0.6] {3} (0)
			(5) edge [blue] node [right] {2} (0) 
			(0) edge  node [left] {1} (1);
		\end{tikzpicture}
		%}
	\centering
	\captionsetup{justification=centering,margin=0.1cm}
	\caption{$\mathcal{G}_5 = \mathcal{G}(\Sym_3,d_{13})$} \label{17}
\end{minipage}%
\begin{minipage}[t][][t]{0.33\textwidth}
	\begin{tikzpicture}[scale=.575,transform shape,>=stealth',shorten
		>=1pt,auto,node distance=3cm, thick,main node/.style={circle,draw,font=\small}]
		\tikzstyle{every node}=[node distance = 3cm,minimum size=9mm,bend angle    = 45,fill          = gray!30]
		
		\node[main node] (1) at (-2,1.5) {12};
		\node[main node] (2) at (-2,4) {123};
		\node[main node] (3) at (0,5.5) {13};
		\node[main node] (4) at (2,4) {132};
		\node[main node] (5) at (2,1.5) {23};
		\node[main node] (0) at (0,0) {id};
		
		\path[every node/.style={font=\large,circle}]
			(1) edge [blue] node[left,pos=0.4] {2} (4)
			edge [black!50!green,dashed]  node [left, pos=0.6] {4} (3)
			edge [black!50!green,dashed]  node[above,pos=0.4] {4} (5)
			(2) edge [red] node [left] {3} (1)
			edge [black!50!green,dashed]  node [below, pos=0.6] {4} (4)
			edge [black!50!green,dashed]  node [left,pos=0.6] {4} (0)
			edge [blue] node [left] {2} (3)
			edge  node[right,pos=0.6] {1} (5)
			(3) edge  node [right] {1} (4) 
			edge [black!50!green,dashed]  node [right, pos=0.4] {4} (5)
			edge [red] node [left,pos=0.2] {3} (0)
			(4) edge [red] node [right] {3} (5)
			edge [black!50!green,dashed]  node [right,pos=0.6] {4} (0)
			(5) edge [blue] node [right] {2} (0)
			(0) edge  node [left] {1} (1);
	\end{tikzpicture}
	%}
\centering
\captionsetup{justification=centering,margin=0.1cm}
\caption{$\mathcal{G}_5 = \mathcal{G}(\Sym_3,d_{15})$} \label{18}
\end{minipage}%
\hfill
\end{figure}

Applying automorphisms of $\Sym_3$ one can obtain the isomorphic graphs corresponding to the rest of the metrics. Thus, $d_3, d_4, d_5$ are given by Figure \ref{14}, $d_6,d_7,d_8$ by Figure \ref{15}, $d_9,d_{10}, d_{11}$ by Figure \ref{16} and $d_{12},d_{13},d_{14}$ by Figure \ref{17}.

Proceeding as in the previous examples we can obtain all the symmetry groups and metrics. We leave the details and summarize the information in the following table. 
\renewcommand{\arraystretch}{.95}
\begin{table}[H] 
\caption{Invariant metrics for $\Sym_3$} \label{tabla S3}
\centering
$$\begin{tabular}{|c|ccccc|c|c|c|c|c|} \hline
$\#$ & 12 & 13 & 23 & 123 & 132 & $s$ & chain & metrics & $\Gamma(\Sym_3,d)$ & order \\ \hline
1 & 1&1&1&1&1			& 1 & \ding{55} & $d_{Ham}$ & $\Sym_6$ & 720 \\ \hline
2 & 1&1&1&2&2  			& 2 & \ding{51} & $d_{\langle (123) \rangle}$& $\Sym_3 \wr \Sym_2$ & 72 \\ \hline
3 & 1&2&2&2&2	  		& 2 & \ding{51} & $d_{\langle (12) \rangle}$& $\Sym_2\wr \Sym_3$ & 48 \\ \hline
4 & 2&1&2&2&2			& 2 & \ding{51} & $d_{\langle (13) \rangle}$& $\Sym_2\wr \Sym_3$ & 48 \\ \hline
5 & 2&2&1&2&2 			& 2 & \ding{51} & $d_{\langle (23) \rangle}$ & $\Sym_2\wr \Sym_3$ & 48 \\ \hline
6 & 1&1&2&2&2	  		& 2 & \ding{55} & & $\D_6$ & 12 \\ \hline
7 & 1&2&1&2&2  			& 2 & \ding{55} & & $\D_6'$ & 12 \\ \hline
8 & 2&1&1&2&2 			& 2 & \ding{55} & & $\D_6''$ & 12 \\ \hline
9 & 1&2&2&3&3 			& 3 & \ding{55} & & $\D_6''$ & 12 \\ \hline
10& 2&1&2&3&3		 	& 3 & \ding{55} & & $\D_6'$ & 12 \\ \hline
11& 2&2&1&3&3		 	& 3 & \ding{55} & & $\D_6$ & 12 \\ \hline
12& 1&2&3&3&3		 	& 3 & \ding{55} & & $\Sym_3$ & 6\\ \hline
13& 1&3&2&3&3			& 3 & \ding{55} & & $\Sym_3$ & 6 \\ \hline
14& 3&1&2&3&3	 		& 3 & \ding{55} & & $\Sym_3$ & 6 \\ \hline
15& 1&2&3&4&4		    & 4 & \ding{51} & $d_{Lee}$ & $\Sym_3$ & 6 \\
\hline
\end{tabular}$$
\end{table}
The metrics $d_4, d_6$ and $d_{10}$ are chain metrics with respect to the chains $\Z_2 \subset \Sym_3$ 
where $\Z_2$ is respectively given by $\langle (23) \rangle$, $\langle (12) \rangle$ and $\langle (13) \rangle$;
while $d_2$ is a chain metric with respect to $\Z_3 \subset \Sym_3$ with $\Z_3=\langle (123) \rangle$.
\hfill $\lozenge$	
\end{exam}

Finally, we study the invariant metrics of the group $\Z_7$.
\begin{exam}
The group $\Z_7$ has 5 unitary symmetric partitions given by  
\begin{gather*}
\mathcal{P}_1 = \big\{ \{0\}, \{1,2,3,4,5,6\}\big\},
\mathcal{P}_2 = \big\{ \{0\}, \{1,6\}, \{2,3,4,5\}\big\}, \\
\mathcal{P}_3 = \big\{ \{0\}, \{2,5\}, \{1,3,4,6\}\big\}, \quad 
\mathcal{P}_4 = \big\{ \{0\}, \{3,4\}, \{1,2,5,6\}\big\}, \quad 
\mathcal{P}_5 = \big\{ \{0\}, \{1,6\}, \{2,5\}, \{3,4\} \big\}.
\end{gather*}
Notice that in this case, partitions $\mathcal{P}_2$, $\mathcal{P}_3$ and $\mathcal{P}_4$ are isomorphic since there are 
automorphisms of $\Z_7$ sending the parts of any partition to the parts of the another ones. For instance, one can pass from  $\mathcal{P}_2$ to $\mathcal{P}_3$ by multiplying by $2$ and from $\mathcal{P}_2$ to $\mathcal{P}_4$ by multiplying by $3$.
The partition graphs of these metrics are as follows: % {\blue(black=1, blue=2, red=3)}
\begin{figure}[h!]
	\centering
	\begin{minipage}[t][][t]{0.33\textwidth}
	\centering
\begin{tikzpicture}[scale=.65,transform shape,>=stealth',shorten
	>=1pt,auto,node distance=3cm, thick,main node/.style={circle,draw,font=\small}]
	\tikzstyle{every node}=[node distance = 3cm,bend angle    = 45,fill          = gray!30]
	
	\node[main node] (1) at (6*360/7-90:2.5) {$1$};
	\node[main node] (2) at (5*360/7-90:2.5) {$2$};
	\node[main node] (3) at (4*360/7-90:2.5) {$3$};
	\node[main node] (4) at (3*360/7-90:2.5) {$4$};
	\node[main node] (5) at (2*360/7-90:2.5) {$5$};
	\node[main node] (6) at (1*360/7-90:2.5) {$6$};
	\node[main node] (0) at (0*360/7-90:2.5) {$0$};
		
	\path[every node/.style={font=\large,circle}]
	
	(1) edge node[left, pos=0.64] {1} (4)
			edge node[right, pos=0.5] {1} (3)
			edge node[below,pos=0.8] {1} (5)
			edge node[above,pos=0.5] {1} (6)
	(2) edge node[left] {1} (1)
			edge node[below, pos=0.4] {1} (4)
			edge node[right,pos=0.44] {1} (0)
			edge node[left,pos=0.6] {1} (3)
			edge node[below,pos=0.5] {1} (5)
			edge node[below,pos=0.2] {1} (6)
	(3) edge node[above] {1} (4) 
			edge node[below, pos=0.6] {1} (5)
			edge node[right,pos=0.36] {1} (6)
			edge node[right,pos=0.5] {1} (0)
	(4) edge node[right,pos=0.4] {1} (5)
			edge node[left] {1} (6)
			edge node[left,pos=0.5] {1} (0)
	(5) edge node[right,pos=0.5] {1} (6)
			edge node[left,pos=0.44] {1} (0)
	(6) edge  node [below,pos=0.4] {1} (0)
	(0) edge  node [below,pos=0.6] {1} (1);
	\end{tikzpicture}
	\centering
	\captionsetup{justification=centering,margin=0.1cm}
	\caption*{$\mathcal{G}_1 = \mathcal{G}(\Z_7,d_1)$} \label{19}
\end{minipage}%
\hfill
\begin{minipage}[t][][t]{0.33\textwidth}%
	
\begin{tikzpicture}[scale=.65,transform shape,>=stealth',shorten
	>=1pt,auto,node distance=3cm, thick,main node/.style={circle,draw,font=\small}]
	\tikzstyle{every node}=[node distance = 3cm,bend angle    = 45,fill          = gray!30]
	
	\node[main node] (1) at (6*360/7-90:2.5) {$i$};
	\node[main node] (2) at (5*360/7-90:2.5) {$2i$};
	\node[main node] (3) at (4*360/7-90:2.5) {$3i$};
	\node[main node] (4) at (3*360/7-90:2.5) {$4i$};
	\node[main node] (5) at (2*360/7-90:2.5) {$5i$};
	\node[main node] (6) at (1*360/7-90:2.5) {$6i$};
	\node[main node] (0) at (0*360/7-90:2.5) {$0$};
	
   	\path[every node/.style={font=\large,circle}]

   	(1) edge [blue] node[left, pos=0.64] {2} (4)
			edge [blue] node[right, pos=0.5] {2} (3)
			edge [blue] node[below,pos=0.8] {2} (5)
			edge [blue] node[above,pos=0.5] {2} (6)
	(2) edge node[left] {1} (1)
			edge [blue] node[below, pos=0.4] {2} (4)
			edge [blue] node[right,pos=0.44] {2} (0)
			edge node[left,pos=0.6] {1} (3)
			edge [blue] node[below,pos=0.5] {2} (5)
			edge [blue] node[below,pos=0.2] {2} (6)
	(3) edge node[above] {1} (4) 
			edge [blue] node[below, pos=0.6] {2} (5)
			edge [blue] node[right,pos=0.36] {2} (6)
			edge [blue] node[right,pos=0.5] {2} (0)
	(4) edge node[right,pos=0.4] {1} (5)
			edge [blue] node[left] {2} (6)
			edge [blue] node[left,pos=0.5] {2} (0)
	(5) edge node[right,pos=0.5] {1} (6)
			edge [blue] node[left,pos=0.44] {2} (0)
	(6) edge  node [below,pos=0.4] {1} (0)
	(0) edge  node [below,pos=0.6] {1} (1);
	\end{tikzpicture}
	\centering 
	\captionsetup{justification=centering,margin=0.1cm}
	\caption*{$\mathcal{G}_{i+1} = \mathcal{G}(\Z_7,d_{i+1})$, $i=1,2,3$} \label{20}
\end{minipage} 
\hfill
\begin{minipage}[t][][t]{0.33\textwidth}
		\begin{tikzpicture}[scale=.65,transform shape,>=stealth',shorten
	>=1pt,auto,node distance=3cm, thick,main node/.style={circle,draw,font=\small}]
	\tikzstyle{every node}=[node distance = 3cm,bend angle    = 45,fill          = gray!30]
	
	\node[main node] (1) at (6*360/7-90:2.5) {$1$};
	\node[main node] (2) at (5*360/7-90:2.5) {$2$};
	\node[main node] (3) at (4*360/7-90:2.5) {$3$};
	\node[main node] (4) at (3*360/7-90:2.5) {$4$};
	\node[main node] (5) at (2*360/7-90:2.5) {$5$};
	\node[main node] (6) at (1*360/7-90:2.5) {$6$};
	\node[main node] (0) at (0*360/7-90:2.5) {$0$};
	
    \path[every node/.style={font=\large,circle}]
    
    (1) edge [red] node[left, pos=0.64] {3} (4)
			edge [blue] node[right, pos=0.5] {2} (3)
			edge [red] node[below,pos=0.8] {3} (5)
			edge [blue] node[above,pos=0.5] {2} (6)
	(2) edge node[left] {1} (1)
			edge [blue] node[below, pos=0.4] {2} (4)
			edge [blue] node[right,pos=0.44] {2} (0)
			edge node[left,pos=0.6] {1} (3)
			edge [red] node[below,pos=0.5] {3} (5)
			edge [red] node[below,pos=0.2] {3} (6)
	(3) edge node[above] {1} (4) 
			edge [blue] node[below, pos=0.6] {2} (5)
			edge [red] node[right,pos=0.36] {3} (6)
			edge [red] node[right,pos=0.5] {3} (0)
	(4) edge node[right,pos=0.4] {1} (5)
			edge [blue] node[left] {2} (6)
			edge [red] node[left,pos=0.5] {3} (0)
	(5) edge node[right,pos=0.5] {1} (6)
			edge [blue] node[left,pos=0.44] {2} (0)
	(6) edge  node [below,pos=0.4] {1} (0)
	(0) edge  node [below,pos=0.6] {1} (1);
	
	(1) edge [red]  node[left, pos=0.4] {3} (4)
			edge [blue]  node[left, pos=0.6] {2} (3)
			edge [red]  node[above,pos=0.4] {3} (5)
			edge [blue]  node[above,pos=0.4] {2} (6)
	(2) edge node [left] {1} (1)
			edge [blue]  node [below, pos=0.6] {2} (4)
			edge [blue]  node [left,pos=0.6] {2} (0)
			edge node [left] {1} (3)
			edge [red]  node[right,pos=0.6] {3} (5)
			edge [red]  node[right,pos=0.6] {3} (6)
	(3) edge node [above] {1} (4) 
			edge [blue]  node [right, pos=0.4] {2} (5)
			edge [red]  node [left,pos=0.2] {3} (6)
			edge [red]  node [left,pos=0.2] {3} (0)
	(4) edge  node [right] {1} (5)
			edge [blue] node [right] {2} (6)
			edge [red] node [right,pos=0.6] {3} (0)
	(5) edge  node [right] {1} (6)
			edge [blue]  node [right,pos=0.6] {2} (0)
	(6) edge  node [below,pos=0.4] {1} (0)
	(0) edge  node [below,pos=0.6] {1} (1);
	\end{tikzpicture}
	\centering
		\captionsetup{justification=centering,margin=0.1cm}
		\caption*{$\mathcal{G}_5 = \mathcal{G}(\Z_7,d_5)$} \label{21}
	\end{minipage}
\hfill
\end{figure}

From the above graphs, we see that we have the following $s$-weight functions on $\Z_7$: 
\renewcommand{\arraystretch}{1.1}
\begin{table}[H]
\caption{Invariant metrics for $\Z_7$} \label{tabla Z7}	
\begin{equation*}  \label{tabZ7}
\begin{tabular}{|c|cccccc|c|c|c|}
\hline
$\Z_7$& $1$ & $2$ & $3$ & $4$ & $5$ & $6$ & $s$ & metrics & $\Gamma$ \\ \hline
$w_1$ & $1$ & $1$ & $1$ & $1$ & $1$ & $1$ & $1$ & $d_{Ham}$, $d_{hom}^{\ff_7}$ & $\Sym_7$ \\
$w_2$ & $2$ & $2$ & $1$ & $1$ & $2$ & $2$ & $2$ &  & $\D_7$ \\  
$w_3$ & $2$ & $1$ & $2$ & $2$ & $1$ & $2$ & $2$ &  & $\D_7$ \\  
$w_4$ & $1$ & $2$ & $2$ & $2$ & $2$ & $1$ & $2$ &  & $\D_7$ \\
$w_5$ & $1$ & $2$ & $3$ & $3$ & $2$ & $1$ & $3$ & $d_{Lee}$ & $\D_7$ \\ 
\hline
\end{tabular}
\end{equation*}
\end{table}

We know that $\Gamma(\Z_7,d_1)\simeq \D_7$ and $\Gamma(\Z_7,d_5)\simeq \Sym_7$. For the other groups, we have 
$$\D_7 \subset \Gamma(\Z_7,d_i) = \mathrm{Aut}(Cay(\Z_7,\{a,-a\})) \cap \mathrm{Aut}(Cay(\Z_7,\{b,-b, c, -c\}))$$ 
with $a,b,c \in \{1,2,3\}$ for $i=2,3,4$.
Since 
$\mathrm{Aut}(Cay(\Z_7,\{a,-a\})) = \mathrm{Aut}(C_7) \simeq \D_7$, 
we have that $\Gamma(\Z_7,d_i)\simeq \D_7$ for $i=2,3,4$.
\hfill $\lozenge$ 
\end{exam}

For groups or order $\ge 8$, the number of invariant metrics grows rapidly (as we will see in the next section) and the computation of the metrics and the symmetry groups gets more and more difficult. For this reason, we now study the symmetry groups for the non-abelian groups of order 8, $\D_4$ and $\Q_8$, 
only with the `Lee' metrics, that is those associated to the finest unitary symmetric partition.

\begin{exam} \label{Q8D4}
Consider the quaternion and the dihedral groups $\Q_8=\{\pm 1, \pm i, \pm j, \pm k\}$ with $i^2=j^2=k^2=ijk=1$ and $\D_4= \{ e, \rho, \rho^2, \rho^3, \tau, \tau \rho, \tau \rho^2, \tau \rho^3\}$ where $\rho^4=\tau^2=e$ and $\rho \tau \rho=\tau$.  
Their Lee partitions %(finest unitary symmetric) 
and conjugate Lee partitions are given by 
\begin{align*}
& \Prep_{Lee}(\Q_8) = \{ \{1\}, \{-1\}, \{i,-i\}, \{j,-j\}, \{k,-k\}\} = \Prep_{Lee}^{c}(\Q_8), \\ 
& \Prep_{Lee}(\D_4) = \{\{e\}, \{\rho,\rho^3\}, \{\rho^2\}, \{\tau\}, \{\rho^2\tau\}, \{\rho\tau\}, \{\rho^3\tau\}\}, \\
& \Prep_{Lee}^{c}(\D_4) = \{\{e\},\{\rho,\rho^3\},\{\rho^2\},\{\tau,\rho^2\tau\},\{\rho\tau,\rho^3\tau\}\}. 
\end{align*}
 Note that the fact that the Lee partition and the conjugate Lee partition of $\Q_8$ coincide  implies that any invariant metric of $\Q_8$ is also bi-invariant.

The partition graphs of $\D_4$ and $\Q_8$ for these metrics are respectively given by (black=1, blue=2, red=3, green-dashed=4, magenta-dotted=5, cyan=6): 
\begin{figure}[H]
	\centering
	\begin{minipage}[t][][t]{0.33\textwidth}
		\begin{tikzpicture}[scale=.8,transform shape,>=stealth',shorten
		>=1pt,auto,node distance=3cm, thick,main node/.style={circle,draw,font=\small}]
		\tikzstyle{every node}=[node distance = 3cm,minimum size=7.8mm,bend angle    = 45,fill          = gray!30]
		
		\node[main node] (1) at (7*360/8-90:2.5) {$i$};
		\node[main node] (2) at (6*360/8-90:2.5) {$j$};
		\node[main node] (3) at (5*360/8-90:2.5) {$k$};
		\node[main node] (4) at (4*360/8-90:2.5) {$-1$};
		\node[main node] (5) at (3*360/8-90:2.5) {$-k$};
		\node[main node] (6) at (2*360/8-90:2.5) {$-j$};
		\node[main node] (7) at (1*360/8-90:2.5) {$-i$};
		\node[main node] (0) at (0*360/8-90:2.5) {$1$};
				
		\path[every node/.style={font=\large,circle}]
		
		(0) edge  node [right,pos=0.66] {1} (4)
		edge [blue]  node[below, pos=0.6] {2} (1)
		edge [blue]  node[below,pos=0.6] {2} (7)
		edge [red]  node[right, pos=0.36] {3} (2)
		edge [red]  node[left, pos=0.36] {3} (6)
		edge [black!50!green,dashed]  node[right, pos=0.54] {4} (3)
		edge [black!50!green,dashed]  node[left, pos=0.55] {4} (5)
		
		(1) edge  node [above,pos=0.42] {1} (7)
		edge [blue]  node[left, pos=0.34] {2} (4)
		edge [red]  node[right, pos=0.58] {3} (3)
		edge [red]  node[left, pos=0.2] {3} (5)
		edge [black!50!green,dashed]  node[left,pos=0.4] {4} (2)
		edge [black!50!green,dashed]  node[above,pos=0.46] {4} (6)
		(2) edge node [above,pos=0.34] {1} (6)
		edge [blue]  node [left, pos=0.6] {2} (3)
		edge [blue]  node [above,pos=0.39] {2} (5)
		edge [red]  node[right,pos=0.64] {3} (4)
		edge [black!50!green,dashed]  node[above,pos=0.55] {4} (7)
		(3) edge node [below,pos=0.6] {1} (5) 
		edge [blue]  node [above, pos=0.63] {2} (6)
		edge [red]  node [right,pos=0.8] {3} (7)
		edge [black!50!green,dashed]  node [above,pos=0.2] {4} (4)
		(4) edge [black!50!green,dashed] node [above,pos=0.6] {4} (5)
		edge [blue] node [right,pos=0.66] {2} (7)
		edge [red] node [left,pos=0.36] {3} (6)
		(5) edge [blue]  node [right,pos=0.4] {2} (6)
		edge [red] node [left,pos=0.42] {3} (7)
		(6) edge [black!50!green,dashed]  node [right,pos=0.6] {4} (7);
	
		\end{tikzpicture}
		\centering
		\captionsetup{justification=centering,margin=0.1cm}
		\caption*{$\mathcal{G}(\Q_8, d_{Lee})$}
	\end{minipage}%
%\hspace{1cm} 
	\centering
	\begin{minipage}[t][][t]{0.33\textwidth}
		\begin{tikzpicture}[scale=.8,transform shape,>=stealth',shorten
		>=1pt,auto,node distance=3cm, thick,main node/.style={circle,draw,font=\small}]
		\tikzstyle{every node}=[node distance = 3cm, minimum size=9mm, bend angle = 45, fill = gray!30]
		
		\node[main node] (1) at (7*360/8-90:2.5) {$\tau$};
		\node[main node] (2) at (6*360/8-90:2.5) {$\rho$};
		\node[main node] (3) at (5*360/8-90:2.5) {$\rho\tau$};
		\node[main node] (4) at (4*360/8-90:2.5) {$\rho^2$};
		\node[main node] (5) at (1*360/8-90:2.5) {$\rho^2\tau$};
		\node[main node] (6) at (2*360/8-90:2.5) {$\rho^3$};
		\node[main node] (7) at (3*360/8-90:2.5) {$\rho^3\tau$};
		\node[main node] (0) at (0*360/8-90:2.5) {$e$};
		
		\path[every node/.style={font=\large,circle}]
		
		(0) edge  node [right,pos=0.36] {1} (2)
		edge  node [left,pos=0.36] {1} (6)
		edge [blue]  node[below, pos=0.6] {2} (1)
		edge [red]  node[right, pos=0.54] {4} (3)
		edge [black!50!green,dashed]  node[below,pos=0.6] {4} (5)
		edge [magenta,dotted]  node[right, pos=0.66] {5} (4)
		edge [cyan]  node[left, pos=0.55] {6} (7)
		(1) edge  node [right, pos=0.58] {1} (3)
		edge  node [left,pos=0.2] {1} (7)
		edge [black!50!green,dashed]  node[left, pos=0.34] {4} (4)
		edge [magenta,dotted]  node[above,pos=0.42] {5} (5)
		edge [cyan]  node[left,pos=0.4] {6} (2)
		edge [red]  node[above,pos=0.46] {3} (6)
		(2) edge  node [right,pos=0.64] {1} (4)
		edge [blue]  node [left, pos=0.6] {2} (3)
		edge [red]  node [above,pos=0.55] {2} (5)
		edge [black!50!green,dashed]  node[above,pos=0.39] {4} (7)
		edge [magenta,dotted]  node[above,pos=0.34] {5} (6)
		(3) edge node [right,pos=0.8] {1} (5) 
		edge [black!50!green,dashed]  node [above, pos=0.63] {4} (6)
		edge [magenta,dotted]  node [below,pos=0.6] {3} (7)
		edge [cyan]  node [above,pos=0.2] {6} (4)
		(4) edge node [left,pos=0.36] {1} (6)
		edge [blue] node [right,pos=0.67] {2} (5)
		edge [red] node [above,pos=0.6] {3} (7)
		(5) edge node [left,pos=0.58] {1} (7)
		edge [cyan]  node [right,pos=0.4] {6} (6)
		(6) edge [blue]  node [right,pos=0.6] {2} (7);
	
		\end{tikzpicture}
		\centering
		\captionsetup{justification=centering,margin=0.1cm}
		\caption*{$\mathcal{G}(\D_4, d_{Lee})$}
	\end{minipage} 
%\hspace{1cm} 
	%\hfill
	\begin{minipage}[t][][t]{0.33\textwidth}
		\begin{tikzpicture}[scale=.8,transform shape,>=stealth',shorten
		>=1pt,auto,node distance=3cm, thick,main node/.style={circle,draw,font=\small}]
		\tikzstyle{every node}=[node distance = 3cm,minimum size=9mm,bend angle    = 45,fill          = gray!30]
		
		\node[main node] (1) at (7*360/8-90:2.5) {$\tau$};
		\node[main node] (2) at (6*360/8-90:2.5) {$\rho$};
		\node[main node] (3) at (5*360/8-90:2.5) {$\rho\tau$};
		\node[main node] (4) at (4*360/8-90:2.5) {$\rho^2$};
		\node[main node] (7) at (1*360/8-90:2.5) {$\rho^2\tau$};
		\node[main node] (6) at (2*360/8-90:2.5) {$\rho^3$};
		\node[main node] (5) at (3*360/8-90:2.5) {$\rho^3\tau$};
		\node[main node] (0) at (0*360/8-90:2.5) {$e$};
		
		\path[every node/.style={font=\large,circle}]
		
		(0) edge  node [right,pos=0.66] {1} (4)
		edge [blue]  node[below, pos=0.6] {2} (1)
		edge [blue]  node[below,pos=0.6] {2} (7)
		edge [red]  node[right, pos=0.36] {3} (2)
		edge [red]  node[left, pos=0.36] {3} (6)
		edge [black!50!green,dashed]  node[right, pos=0.54] {4} (3)
		edge [black!50!green,dashed]  node[left, pos=0.55] {4} (5)
		
		(1) edge  node [above,pos=0.42] {1} (7)
		edge [blue]  node[left, pos=0.34] {2} (4)
		edge [red]  node[right, pos=0.58] {3} (3)
		edge [red]  node[left, pos=0.2] {3} (5)
		edge [black!50!green,dashed]  node[left,pos=0.4] {4} (2)
		edge [black!50!green,dashed]  node[above,pos=0.46] {4} (6)
		(2) edge node [above,pos=0.34] {1} (6)
		edge [blue]  node [left, pos=0.6] {2} (3)
		edge [blue]  node [above,pos=0.39] {2} (5)
		edge [red]  node[right,pos=0.64] {3} (4)
		edge [black!50!green,dashed]  node[above,pos=0.55] {4} (7)
		(3) edge node [below,pos=0.6] {1} (5) 
		edge [blue]  node [above, pos=0.63] {2} (6)
		edge [red]  node [right,pos=0.8] {3} (7)
		edge [black!50!green,dashed]  node [above,pos=0.2] {4} (4)
		(4) edge [black!50!green,dashed] node [above,pos=0.6] {4} (5)
		edge [blue] node [right,pos=0.66] {2} (7)
		edge [red] node [left,pos=0.36] {3} (6)
		(5) edge [blue]  node [right,pos=0.4] {2} (6)
		edge [red] node [left,pos=0.42] {3} (7)
		(6) edge [black!50!green,dashed]  node [right,pos=0.6] {4} (7);
	
		\end{tikzpicture}
		\centering
		\captionsetup{justification=centering,margin=0.1cm}
		\caption*{$\mathcal{G}(\D_4, d_{Lee}^c)$}
	\end{minipage} 
\end{figure}
\noindent
That is, $\mathcal{G}(\Q_8, d_{Lee}^c) = \mathcal{G}(\D_4, d_{Lee}^c)  = 4K_2 \cup 3(2C_4)$ and 
$\mathcal{G}(\D_4, d_{Lee}) = 6(4K_2) \cup 2C_4$.  By Corollary~\ref{Cay(Cn)}, the groups $\mathcal{G}(G, d_{Lee}^c)$ are intersections of groups isomorphic to $\Z_2 \wr \Sym_4$ and $\D_4 \wr \Z_2$.
Hence, by Example \ref{GDQ} with $n=2$,
for $G=\D_4, \Q_8$ we have 
$$G \rtimes (\Z_2 \times \Z_2) \le \Gamma(G, d_{Lee}^c) \le  \D_4 \wr \Z_2$$
since $G/\Z_2=\Z_2 \times \Z_2$.
Thus, $2^5 \le \#\Gamma(G,d_{Lee}^c) \le 2^7$ for  $G=\D_4, \Q_8$.

We now compute the group $\Gamma(G, d_{Lee}^c)$ for $G=\D_4,\Q_8$. Notice that $\Gamma(G,d_{Lee}) \le \Sym_8$. 
First we find the order. Using that $\Gamma(G, d_{Lee}^c)$ acts on $G$ then we have
$$|\Gamma(G, d_{Lee}^c)|=|G||\Gamma_e|,$$ 
where $\Gamma_e$ is the stabilizer of $\Gamma(G, d_{Lee}^c)$. 
From the graphs, one can see that $\Gamma_e= \Z_2 \times \Z_2 \times \Z_2$. In fact, the group $\Gamma_e$ for $G=\Q_8$ is determined by the 3 independent permutations sending $i \mapsto -i$, $j \mapsto -j$ and $k \mapsto -k$. Similarly, the group $\Gamma_e$ for $G=\D_4$ is given by  $\tau \mapsto \rho^2 \tau$, $\rho\mapsto \rho^3$ and $\rho \tau \mapsto \rho^3 \tau$.
Thus, we get that $|\Gamma(G, d_{Lee})^c|=2^6$.  

There are 267 groups of order 64, out of which 256 are non-abelian ones. 
Intuitively, this symmetry group should be a wreath product. But there are only 2 wreath products of order 64, $\Z_2 \wr \Z_4$ and $\Z_2 \wr (\Z_2 \times \Z_2)$. Consider the group 
$$\Z_2 \wr (\Z_2 \times \Z_2) = \Z_2^4 \rtimes_\varphi (\Z_2 \times \Z_2)$$
with $\Z_2^4 = \langle \varphi_1, \varphi_i, \varphi_j, \varphi_k \rangle$ and $\Z_2^2 = \langle \varphi_{ij}, \varphi_{ik} \rangle$ where the involution $\varphi_a$ is determined by $a \leftrightarrow -a$ for $a\in \{1,i,j,k\}$, $\varphi_{ij}$ is determined by $i \leftrightarrow j$, $-i \leftrightarrow -j$, $1 \leftrightarrow -1$, $k \leftrightarrow -k$ and $\varphi_{ik}$ is determined by $i \leftrightarrow k$, $-i \leftrightarrow -k$, $1 \leftrightarrow -1$, $j \leftrightarrow -j$.  
We have that $\Z_2^4 \rtimes (\Z_2 \times \Z_2)$ is a subgroup of $\Gamma(G, d_{Lee}^c)$, which has the same cardinality, and hence $\Gamma(G, d_{Lee}^c) = \Z_2 \wr (\Z_2 \times \Z_2)$.
\hfill $\lozenge$
\end{exam}

\section{The number of invariant metrics on finite groups} \label{S5}
How many equivalence $\Prep$-classes of metrics are there on a finite group $G$?
We will denote the number of non-equivalent invariant metrics of $G$ by $M(G)$, that is 
$$M(G)=\#(\widetilde{\mathcal{M}}(G)).$$ 
To answer this question we use Bell numbers. We recall that the $n$-th Bell number $B_n$ counts the number of partitions of a set of $n$ elements. 
The first Bell numbers are 
\begin{equation*}
\label{bells}
B_0=1, \: B_1=1, \: B_2=2, \: B_3=5, \: B_4=15, \: B_5=52, \: B_6=203, \: B_7=877, 
\: B_8=4.140.
\end{equation*}

We now give the number of invariant metrics on a group $G$. We denote by 
$k_2(G)$ the number of elements of order $\le 2$ of $G$, 
that is 
\begin{equation} \label{k2G}
k_2(G)=\#\{ x\in G : x^2 = e \}.
\end{equation}

\begin{prop} \label{CardMG}
	If $G$ is a finite group of order $n$ then $M(G)= B_{k(G)}$
	where 
	\begin{equation} \label{kG2} 
	k(G) = \tfrac 12 \{ n+k_2(G)\}-1.
	\end{equation}
	In particular, if $n$ is odd then $M(G) = B_{\frac{n-1}2}$.
\end{prop}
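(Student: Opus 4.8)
The plan is to reduce the count of $\mathcal P$-classes of invariant metrics to a pure combinatorial count of partitions of a finite set, using the correspondence already established. By Proposition~\ref{prop 1-1}, since $G$ is finite we have $|\mathcal P| \le |G| < |\mathbb R|$ for every unitary symmetric partition $\mathcal P$ of $G$, so $\bar\pi_G$ is a bijection and hence
$$M(G) = \#\big(\widetilde{\mathcal M}(G)\big) = \#\,\Prep(G),$$
the number of unitary symmetric partitions of $G$. So the whole task is to count $\Prep(G)$.

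Next I would observe that a unitary partition of $G$ is the same datum as an arbitrary partition of $G\smallsetminus\{e\}$ (one simply adjoins the block $\{e\}$, which is forced), and under this identification the symmetric unitary partitions correspond exactly to those partitions of $G\smallsetminus\{e\}$ all of whose blocks are stable under inversion $x\mapsto x^{-1}$. To count the latter, consider the action of the group $\langle i\rangle \cong \Z_2$ (or the trivial group, when the element is an involution) on $G\smallsetminus\{e\}$ by inversion; its orbits are the singletons $\{x\}$ with $x^2=e$, $x\ne e$, and the two-element sets $\{x,x^{-1}\}$ with $x^2\ne e$. Call this set of orbits $\Omega$. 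The key step is the elementary bijection: a partition of $G\smallsetminus\{e\}$ has all blocks inversion-stable if and only if every block is a union of orbits, and such partitions are in natural bijection with the partitions of the set $\Omega$ (a block of the induced partition of $\Omega$ is just a collection of orbits, whose union is the corresponding inversion-stable block). Hence $\#\,\Prep(G) = B_{|\Omega|}$.

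It remains to compute $|\Omega|$. There are $k_2(G)$ elements $x$ with $x^2=e$, one of which is $e$ itself, so $k_2(G)-1$ orbits are singletons of involutions; the remaining $n - k_2(G)$ elements split into $\tfrac12(n-k_2(G))$ orbits of size two. Therefore
$$|\Omega| = (k_2(G)-1) + \tfrac12\big(n-k_2(G)\big) = \tfrac12\{n+k_2(G)\} - 1 = k(G),$$
which gives $M(G) = B_{k(G)}$. For the final assertion, if $n$ is odd then $G$ has no element of order $2$ (Lagrange), so $k_2(G)=1$ and $k(G) = \tfrac12(n+1)-1 = \tfrac{n-1}2$, whence $M(G) = B_{\frac{n-1}2}$.

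\textbf{Main obstacle.} There is no serious analytic difficulty here; the only point requiring care is the bijection in the second paragraph between inversion-stable partitions of $G\smallsetminus\{e\}$ and arbitrary partitions of the orbit set $\Omega$ — one must check both that the induced relation on $\Omega$ is genuinely an equivalence relation and that the construction is inverse to taking unions of orbits. Everything else is bookkeeping with the definition of $k_2(G)$ in \eqref{k2G} and $k(G)$ in \eqref{kG2}.
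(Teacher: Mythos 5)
Your proposal is correct and follows essentially the same route as the paper: reduce via the correspondence of Proposition~\ref{prop 1-1} to counting unitary symmetric partitions, then count these by grouping $G\smallsetminus\{e\}$ into inversion orbits (singletons for involutions, pairs $\{g,g^{-1}\}$ for elements of order $>2$), which yields $B_{k(G)}$ with $k(G)=\tfrac12\{n+k_2(G)\}-1$. Your explicit bijection with partitions of the orbit set $\Omega$ just spells out the step the paper states more briefly.
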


\begin{proof}
	In view of the correspondence \eqref{correspondencia}, the number of $\Prep$-classes of metrics of $G$ is given by the number of unitary symmetric partitions of $G$. The identity and the elements of order 2 in $G$ are their own inverses. However, if the element $g\in G$ has order $>2$ then $g$ and $g^{-1}$ must be in the same part, since any partition is symmetric. Thus, it is clear that the number of symmetric partitions is given by the Bell number $B_{k(G)}$ with 
	%associated to the number %$k(G)$ defined in \eqref{kG}. 
	\begin{equation} \label{kG} 
	k(G) = \tfrac{1}{2} \# \big\{ g\in G : ord(g)>2 \big\} + \#\big\{ g\in G : ord(g)= 2 \big\}.
	\end{equation}
	Since $n = k_2(G) + \# \big\{ g\in G : ord(g)>2 \big\}$, we clearly get that 
	$k(G)=\tfrac 12 \{ n-k_2(G) \} + k_2(G)-1$, from where \eqref{kG2} follows. 
	
	Now, if $G$ is a group of odd order then it has no elements of order 2, hence $k_2(G)=1$ and $k(G)=\frac{n-1}2$.
\end{proof}

Notice that if $G$ has even order then it has an odd number of elements of order 2, since
we have the disjoint union $G=\{e\} \cup \{ \text{order $2$ elements} \} \cup \{g,g^{-1}, h,h^{-1}, \ldots \}$. 
Hence, $k(G)$ is a well defined positive integer.

For a group $G$ with (not so) big cardinality $n$ it could be difficult to compute $k(G)$ by hand. In this case, we have the obvious (not so sharp) bound 
\begin{equation} \label{MGBn-1}
M(G) \le B_{n-1}
\end{equation}
since $1\le k(G) \le n-1$ by definition. That is, every group has at least one invariant metric, the Hamming metric. 

\begin{exam} \label{coro k(G)}
	We now give the number of invariant metrics in some easy cases.
	
	\noindent ($i$) 
	If $\Z_n$ denote the cyclic group of order $n$ then $M(\Z_n) = B_{[\frac n2]}$. 
	In fact, we clearly have 
	$k(\Z_{2n}) = \frac12 (2n+2)-1=n$, since there is only one element of order $2$, and $k(\Z_{2n+1}) = \frac 12((2n+1)+1)-1=n$, since there is no elements of order 2. 
	
	\noindent ($ii$) 
	For any $k\ge 1$, since every nonzero element is of order 2, we have $k(\Z_2^k)=k_2(\Z_2^k)$ and thus $M(\Z_2^k) = B_{2^k-1}$.
	Moreover, the equality in \eqref{MGBn-1} holds if and only if $G=\Z_2^k$ for some $k$. 
	\hfill $\lozenge$
\end{exam}

For an abelian group $G$ we can give the precise number of invariant metrics. To this end we will need the following basic observation. 
\begin{lema} \label{lemHxK}
	If $G,K$ are groups then 
	\begin{equation} \label{HxK}
	k(G \times K) = \tfrac12 \{|G||K| + k_2(G)k_2(K)\}-1.
	\end{equation}
In particular, $k(G \times \Z_2^r) = 2^r (k(G)+1)-1$ for every $r\in \N$.
\end{lema}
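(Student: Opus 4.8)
The plan is to derive everything from the formula $k(G)=\tfrac12\{|G|+k_2(G)\}-1$ established in Proposition~\ref{CardMG}, combined with the elementary fact that the number of elements of order $\le 2$ is multiplicative under direct products.

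First I would observe that for $(g,h)\in G\times K$ one has $(g,h)^2=(e_G,e_K)$ if and only if $g^2=e_G$ and $h^2=e_K$. Consequently the set of order-$\le 2$ elements of $G\times K$ is the Cartesian product of the corresponding sets in $G$ and in $K$, so by \eqref{k2G} we obtain $k_2(G\times K)=k_2(G)\,k_2(K)$. Since also $|G\times K|=|G|\,|K|$, substituting both identities into \eqref{kG2} gives $k(G\times K)=\tfrac12\{|G||K|+k_2(G)k_2(K)\}-1$, which is the claimed identity \eqref{HxK}. Here one should note that \eqref{kG2} does apply, since $G\times K$ is again a finite group and hence $k(G\times K)$ is a well-defined integer by the remark following Proposition~\ref{CardMG}.

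For the particular case I would specialise to $K=\Z_2^r$. Every element of $\Z_2^r$ has order dividing $2$, so $k_2(\Z_2^r)=|\Z_2^r|=2^r$. Plugging this into \eqref{HxK} yields
\[
k(G\times\Z_2^r)=\tfrac12\{2^r|G|+2^r k_2(G)\}-1=2^r\cdot\tfrac12\{|G|+k_2(G)\}-1,
\]
and by \eqref{kG2} the quantity $\tfrac12\{|G|+k_2(G)\}$ equals $k(G)+1$; hence $k(G\times\Z_2^r)=2^r(k(G)+1)-1$, as asserted.

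There is no genuine obstacle in this argument: it is essentially the single observation $k_2(G\times K)=k_2(G)k_2(K)$ fed into the closed formula of Proposition~\ref{CardMG}. The only point deserving a moment's care is the bookkeeping of the $\tfrac12$ and the $-1$ when rewriting \eqref{HxK} in terms of $k(G)$ for the $\Z_2^r$ case, together with the (immediate) check that the direct product still lies in the class of groups to which Proposition~\ref{CardMG} applies.
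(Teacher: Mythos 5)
Your argument is correct and coincides with the paper's proof: both reduce the lemma to the multiplicativity $k_2(G\times K)=k_2(G)\,k_2(K)$ (from $(g,h)^2=(e,e)$ iff $g^2=e$ and $h^2=e$) and then substitute into \eqref{kG2}, with the $\Z_2^r$ case following from $k_2(\Z_2^r)=|\Z_2^r|=2^r$. No gaps.
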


\begin{proof}
	Just note that $k_2(G \times K) = \# \{(x,y) \in G \times K : (x,y)^2=(e,e) \} = k_2(G) k_2(K)$.
The remaining assertion is clear since $|\Z_2^r|=k_2(\Z_2^r)= 2^r$.
\end{proof}

Using the previous proposition we will obtain closed expressions for the number of metrics of certain families of groups. 
For abelian groups we have the following result. 
\begin{prop} \label{kGab}
	Let $G$ be an abelian group of order $n$. Then, the number of invariant metrics on $G$ is given by 
	$M(G) = B_{\frac n2+2^{s-1}-1}$ 
	where $s$ is the number of cyclic factors of order some power of 2 in the canonical product decomposition of $G$ into cyclic groups. 
\end{prop}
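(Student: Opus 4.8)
The plan is to deduce this directly from Proposition~\ref{CardMG} together with the multiplicativity of $k_2$. By Proposition~\ref{CardMG} we have $M(G) = B_{k(G)}$ with $k(G) = \tfrac12\{n + k_2(G)\} - 1$ as in \eqref{kG2}, so everything reduces to computing $k_2(G)$ for $G$ abelian: it suffices to show that $k_2(G) = 2^s$, since then $k(G) = \tfrac n2 + 2^{s-1} - 1$ at once.

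First I would split $G$ into cyclic factors. Writing the canonical decomposition $G \cong \Z_{m_1} \times \cdots \times \Z_{m_r}$ into cyclic groups of prime power order and iterating the identity $k_2(A \times B) = k_2(A) k_2(B)$ proved in Lemma~\ref{lemHxK}, one gets $k_2(G) = \prod_{i=1}^{r} k_2(\Z_{m_i})$. Then I would compute each factor: writing $\Z_m$ additively, the elements $x$ with $2x = 0$ are exactly those with $m \mid 2x$, and there are $\gcd(2,m)$ of them, namely $2$ when $m$ is even and $1$ when $m$ is odd. Among the prime powers $m_1, \dots, m_r$ the even ones are precisely the powers of $2$, and by definition there are $s$ of these; hence $k_2(G) = 2^s$. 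Substituting into \eqref{kG2} finishes the proof. (When $s = 0$ the order $n$ is odd, so $\tfrac n2 + 2^{s-1} - 1 = \tfrac{n-1}2$ is still the correct integer value, in agreement with the last assertion of Proposition~\ref{CardMG}.)

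I do not expect a genuine obstacle; the computation is routine once Proposition~\ref{CardMG} and Lemma~\ref{lemHxK} are in hand. The only point to watch is the bookkeeping in the final step, i.e.\ checking that the number of even cyclic factors really equals $s$. With the primary (prime-power) decomposition this is immediate. If one prefers the invariant-factor form $\Z_{d_1} \times \cdots \times \Z_{d_k}$ with $d_1 \mid \cdots \mid d_k$, then the factors divisible by $2$ are exactly the last $s$ of them --- each of the $s$ cyclic $2$-power summands of the Sylow $2$-subgroup contributes its order to a distinct invariant factor, starting from the largest --- so the count is again $s$ and the argument is unchanged.
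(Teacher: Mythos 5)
Your proposal is correct and follows essentially the same route as the paper: decompose $G$ into cyclic factors of prime-power order, use the multiplicativity $k_2(A\times B)=k_2(A)k_2(B)$ from Lemma~\ref{lemHxK} to get $k_2(G)=2^s$, and conclude via Proposition~\ref{CardMG} and \eqref{kG2}. Your extra remarks on the $s=0$ case and the invariant-factor form are fine but not needed.
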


\begin{proof}
Since $G$ is abelian it has a decomposition as product of cyclic groups 
$$G \simeq \Z_{2^{m_1}} \times \cdots \times \Z_{2^{m_s}} \times \Z_{p_1^{r_1}}^{e_1}
\times \cdots \times \Z_{p_s^{r_t}}^{e_t}$$
where $m_1,\ldots,m_s, r_1,\ldots, r_t, e_1, \ldots, e_t \in \N$
and $p_1, \ldots,p_t$ are odd prime numbers (not necessarily distinct). 
Only the factors of even order have elements of order 2 (one per factor). Hence, using \eqref{HxK}, we see that 
$$k_2(G)=k_2(\Z_{2^{m_1}} \times \cdots \times \Z_{2^{m_s}})  = k_2(\Z_{2^{m_1}}) \cdots k_2(\Z_{2^{m_s}}) = 2^s.$$ 
We have $k(G)=\frac{n+2^s}2-1= \frac n2+2^{s-1}-1$, by \eqref{kG2}, and the result follows by Proposition~\ref{CardMG}.
\end{proof}
%Notice that if $n$ is odd, the result is in coincidence with Proposition \ref{}.

We now give the number of invariant metrics for some families of non-abelian groups. We first consider dihedral, dicyclic and quasidihedral groups and later symmetric and alternating groups. 

\begin{prop} \label{prop Ms}
The number of invariant metrics of the dihedral groups $\D_n$, dicyclic groups $\Q_{4n}$ and quasidihedral groups $Q\D_n^\pm$ are given by: 
	\begin{enumerate}[$(a)$]
		\item $M(\D_{2k+1}) = B_{3k+1}$ for $k\ge 1$ and $M(\D_{2k}) = B_{3k}$ for $k\ge 2$. \sk
		
		\item $M(\Q_{4n})=B_{2n}$ for $n\ge 2$. \sk 
		
		\item $M(Q\D_n^-)=B_{5 \cdot 2^{n-3}}$ and $M(Q\D_n^+)=B_{2^{n-1}+1}$ for $n\ge 4$.
	\end{enumerate}
\end{prop}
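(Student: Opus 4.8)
The plan is to invoke Proposition~\ref{CardMG} throughout: since $M(G) = B_{k(G)}$ with $k(G) = \tfrac12\{n + k_2(G)\} - 1$, where $n = |G|$ and $k_2(G) = \#\{x \in G : x^2 = e\}$, each part reduces to a single elementary computation --- counting the involutions of $G$ together with the identity --- followed by rewriting the resulting index in the stated closed form. So I would organize the proof as three such computations, handling $\D_n$, $\Q_{4n}$ and $Q\D_n^\pm$ in turn.

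For $(a)$ I would work with $\D_n = \langle r, s \mid r^n = s^2 = e,\ srs = r^{-1}\rangle$ of order $2n$. Each of the $n$ reflections $r^i s$ is an involution, while among the rotations $r^i$ the equation $r^{2i} = e$ has the single solution $i = 0$ when $n$ is odd and the two solutions $i = 0, n/2$ when $n$ is even. Hence $k_2(\D_n) = n+1$ or $n+2$ according to the parity of $n$; substituting $n = 2k+1$, respectively $n = 2k$, into the formula for $k(\D_n)$ produces the indices $3k+1$ and $3k$. For $(b)$ I would take $\Q_{4n} = \langle a, b \mid a^{2n} = e,\ b^2 = a^n,\ bab^{-1} = a^{-1}\rangle$ of order $4n$, whose elements are $a^i$ and $a^i b$ for $0 \le i < 2n$. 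Among the $a^i$ only $e$ and $a^n$ square to the identity, and the relations give $(a^i b)^2 = a^i(ba^i b^{-1})b^2 = a^i a^{-i} b^2 = a^n \ne e$, so no element of the second type is an involution; thus $k_2(\Q_{4n}) = 2$ and $k(\Q_{4n}) = 2n$, as claimed.

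Part $(c)$ is where the bulk of the care goes. Both $Q\D_n^\pm$ have order $2^n$ and I would use presentations of the form $\langle a, b \mid a^{2^{n-1}} = b^2 = e,\ bab^{-1} = a^{2^{n-2} \pm 1}\rangle$, with elements $a^i$ and $a^i b$ for $0 \le i < 2^{n-1}$. In both cases the only powers of $a$ of order $\le 2$ are $e$ and $a^{2^{n-2}}$. The decisive step is counting the $i$ for which $(a^i b)^2 = e$: one computes $(a^i b)^2 = a^{\,i(1 + (2^{n-2} \pm 1))}$, which simplifies to $a^{\,i\,2^{n-2}}$ for the minus sign and to $a^{\,2i(1 + 2^{n-3})}$ for the plus sign. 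For $Q\D_n^-$ this is trivial exactly when $i$ is even, contributing $2^{n-2}$ further involutions, so $k_2(Q\D_n^-) = 2 + 2^{n-2}$ and the index $k(Q\D_n^-) = \tfrac12(2^n + 2 + 2^{n-2}) - 1$ collapses to $5\cdot 2^{n-3}$. For $Q\D_n^+$, since $1 + 2^{n-3}$ is odd when $n \ge 4$, the exponent $2i(1 + 2^{n-3})$ is divisible by $2^{n-1}$ only for $i \in \{0, 2^{n-2}\}$, so $k_2(Q\D_n^+) = 4$ and the index is $2^{n-1} + 1$.

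I expect the only genuine obstacle to be the quasidihedral case: one must fix explicit presentations distinguishing $Q\D_n^+$ from $Q\D_n^-$, verify that squaring $a^i b$ is given by the claimed exponent, and track $2$-adic valuations carefully --- the factor $2^{n-2}$ for the minus sign versus $2\times(\text{odd})$ for the plus sign --- so that the count of solutions of $(a^i b)^2 = e$ comes out correctly. The dihedral and dicyclic steps are routine parity bookkeeping once Proposition~\ref{CardMG} is in hand.
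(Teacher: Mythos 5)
Your proposal is correct and follows essentially the same route as the paper: apply Proposition~\ref{CardMG} and count, for each family, the involutions (the reflections and, for even $n$, the half-turn in $\D_n$; only $a^n$ in $\Q_{4n}$; and the solutions of $(a^ib)^2=e$ in $Q\D_n^\pm$ via the exponents $i\,2^{n-2}$ and $2i(1+2^{n-3})$), exactly as the paper's proof does. Your $2$-adic bookkeeping in the quasidihedral case matches the paper's count ($2^{n-2}+1$ versus $3$ elements of order $2$), so nothing is missing.
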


\begin{proof} By Proposition \ref{CardMG}, $M(G)=B_{k(G)}$, so we will compute the numbers $k(G)$ in each case.
	
	\noindent ($a$) \label{exam Dn}
	For $n\ge 3$, the dihedral group $\D_n$ is the group of symmetries of a regular $n$-gon and hence has order $2n$.
	One familiar presentation is 
	\begin{equation} \label{Dn def}
	\mathbb{D}_n = \langle a,b : a^n=b^2=e, bab=a^{-1} \rangle.
	\end{equation}
	Notice that if $n$ is odd then $\D_n$ has $n$ elements of order 2 (the reflections) while if $n$ is even $\D_n$ has $n+1$ elements of order 2, the $n$ reflections plus the half turn rotation. Thus, if $n=2k$ we have  
	$k(\D_{n})=\frac{2n+n+2}2 -1=3k$ and if $n=2k+1$ we have $k(\D_{n})=\frac{2n+n+1}2-1 =3k+1$.
	
	\noindent ($b$) \label{exam Dic}
	The dicyclic groups $\Q_{4n}$ of order $4n$, also denoted $\mathrm{Dic}_n$, are defined for every $n\ge 2$ by  
	\begin{equation} \label{dycs rels} 
	\Q_{4n} =  \langle a,b : a^{2n}=e, b^2=a^n, b^{-1}ab=a^{-1} \rangle.
	\end{equation}	
	When $n$ is a power of $2$ these groups are the generalized quaternions $\Q_{2^m}$ for $m\ge 4$, in particular the quaternion group $\Q_8$ for $n=2$.  
	Notice that $\Q_{4n}$ has only one element of order 2, namely $a^n$. It is easy to see that every element in the group can be uniquely written in the form $a^kb^j$, where $0 \le k < 2n$ and 
	$j = 0,1$. It is clear that $a^k a^k=e$ if and only if $k=n$. Also, in general $(a^k b)(a^m b)=a^{k-m+n}$, so we have $a^kba^kb=a^n\ne e$. Thus, $k(\Q_{4n}) = \tfrac 12 (2n+2)-1=2n$. 
	
	\noindent ($c$) \label{QDn}
	The quasidihedral groups $Q\D_n^{\pm}$ of order $2^n$ (also called semidihedral and denoted $S\D_{2^n}^{\pm}$) are defined for every $n\ge 4$ by  
	\begin{equation} \label{qdns rels} 
	Q\D_n^\pm =  \langle x,y : x^{2^{n-1}}=y^{2}=e, yxy=x^{2^{n-2} \pm 1}\rangle.
	\end{equation}
	The group $Q\D_n^-$ has $2^{n-2}+1$ elements of order 2 while the group $Q\D_n^+$ has 3 elements of order 2.
	Therefore, $k(Q\D_n^-)=\tfrac 12(2^n+2^{n-2}+2)-1= 5 \cdot 2^{n-3}$ and 
	$k(Q\D_n^+)=\tfrac 12(2^n+4)-1=2^{n-1}+1$, and ($c$) holds.
	
	Let us check the number of elements of order 2. First note that since $y$ has order 2, the elements of these groups are explicitly given by 
	\begin{equation} \label{qdns els}
	Q\D_n^\pm = \{e,x,x^2,\ldots,x^{2^{n-2}},\ldots,x^{2^{n-1}-1}, y, yx, yx^2, \ldots,yx^{2^{n-2}},\ldots,yx^{2^{n-1}-1} \} \\ 
	\end{equation}
	Clearly, in both groups, $y$ is an element of order 2 and $x^{2^{n-2}}$ is the only element of order 2 in the cyclic subgroup $\langle x \rangle$. We now study how many elements of order 2 are there in the coset $y\langle x \rangle$ in each quasidihedral group. Suppose $yx^k$, with $k=1,\ldots,2^{n-1}-1$, is of order 2. Then, by \eqref{qdns rels} we have 
	$$e=yx^k y x^k=(yx^ky^{-1})x^k= (yxy)^kx^k = x^{k(2^{n-2} \pm 1)} x^k.$$
	Thus, $x^{k2^{n-2}}=e$ in $Q\D_n^-$ and $x^{k(2^{n-2}+2)}=e$ in $Q\D_n^+$. Now, $x^{k2^{n-2}}$ is of order 2 in $Q\D_n^-$ if and only if $k2^{n-2}\equiv 0 \pmod{2^{n-1}}$, and this happens for ever $k$ even. There are $2^{n-2}-1$ such elements and hence $2^{n-2}-1+2=2^{n-2}+1$ elements of order $2$ in $Q\D_n^-$. Similarly, $x^{k(2^{n-2}+2)}$ is of order 2 in $Q\D_n^+$ if and only if $k(2^{n-2}+2) \equiv 0 \pmod{2^{n-1}}$, and this happens only for $k=2^{n-1}$. Thus, there are only 3 elements of order 2 in $Q\D_n^+$.
\end{proof}

We now study the number of invariant metrics in the symmetric group $\Sym_n$ for $n\ge 3$ and the alternating subgroup $\mathbb{A}_n$ for $n\ge 4$ (these groups for smaller values of $n$ are abelian).
In this case, we will not have close expressions, but rather expressions involving the following numbers
\begin{equation} \label{sn}
s_n = \sum_{0 \le k \le \lfloor \frac n2 \rfloor} \tfrac{1}{2^k \, k! \, (n-2k)!} 
\qquad \text{and} \qquad a_n = \sum_{0 \le k \text{ even } \le \lfloor \frac n2 \rfloor} \tfrac{1}{2^k \, k! \, (n-2k)!}.
\end{equation}

\begin{prop} \label{Sn An}
	The number of invariant metrics for $\Sym_n$ and $\mathbb{A}_n$ are given by
	$$M(\Sym_n)= B_{\frac 12 n!(s_n+1)-1} \quad \text{for $n\ge 3$} \qquad \text{and} \qquad 
	M(\mathbb{A}_n) = B_{\frac 12 n!(a_n+\frac 12)-1} \quad \text{for $n\ge 4$,}$$
	where $s_n$ and $a_n$ are as in \eqref{sn}.
\end{prop}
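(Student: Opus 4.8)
The plan is to deduce both formulas from Proposition \ref{CardMG}, which asserts $M(G)=B_{k(G)}$ with $k(G)=\tfrac12\{|G|+k_2(G)\}-1$. Thus everything reduces to counting $k_2(\Sym_n)$ and $k_2(\mathbb{A}_n)$, the number of elements of order at most $2$ in each group.

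First I would treat $\Sym_n$. An element of $\Sym_n$ has order $\le 2$ if and only if it is the identity or a product of disjoint transpositions, say $k$ of them for some $1\le k\le\lfloor n/2\rfloor$. The number of permutations that are a product of exactly $k$ disjoint transpositions is the classical count $\tfrac{n!}{2^k\,k!\,(n-2k)!}$: one chooses the $2k$ moved points and pairs them, dividing by $2^k$ for the order inside each pair and by $k!$ for the order of the $k$ pairs. Summing over $0\le k\le\lfloor n/2\rfloor$ (the $k=0$ term accounting for the identity) gives $k_2(\Sym_n)=n!\,s_n$ with $s_n$ as in \eqref{sn}. Since $|\Sym_n|=n!$, this yields $k(\Sym_n)=\tfrac12\{n!+n!\,s_n\}-1=\tfrac12 n!(s_n+1)-1$, and the first formula follows from Proposition \ref{CardMG}.

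Next I would treat $\mathbb{A}_n$. A product of $k$ disjoint transpositions lies in $\mathbb{A}_n$ exactly when $k$ is even, so the elements of order $\le 2$ in $\mathbb{A}_n$ are precisely those counted above with $k$ even; hence $k_2(\mathbb{A}_n)=n!\,a_n$ with $a_n$ as in \eqref{sn}. Since $|\mathbb{A}_n|=\tfrac12 n!$, we obtain $k(\mathbb{A}_n)=\tfrac12\{\tfrac12 n!+n!\,a_n\}-1=\tfrac14 n!+\tfrac12 n!\,a_n-1=\tfrac12 n!\big(a_n+\tfrac12\big)-1$, which gives the second formula. The restrictions $n\ge 3$ and $n\ge 4$ only serve to exclude the small abelian cases already covered by Proposition \ref{kGab}.

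There is no substantial obstacle here: the single point requiring care is the derivation of the involution-counting formula $\tfrac{n!}{2^k k!(n-2k)!}$ and the parity observation isolating $\mathbb{A}_n$, both of which are routine. If desired, one can alternatively phrase the count of involutions via the exponential generating function $e^{x+x^2/2}=\sum_{n\ge0}\big(n!\,s_n\big)\tfrac{x^n}{n!}$, but the direct combinatorial argument above suffices.
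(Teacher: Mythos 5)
Your proposal is correct and follows essentially the same route as the paper: count the elements of order at most $2$ in $\Sym_n$ (products of $k$ disjoint transpositions, giving $k_2(\Sym_n)=n!\,s_n$), restrict to even $k$ for $\mathbb{A}_n$ (giving $k_2(\mathbb{A}_n)=n!\,a_n$), and then apply Proposition \ref{CardMG}. The only cosmetic difference is that you quote the closed count $\tfrac{n!}{2^k k!(n-2k)!}$ directly, whereas the paper writes the same count as a product of binomial coefficients divided by $(k+1)!$ before simplifying to $n!\,s_n$.
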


\begin{proof} \label{exam Sn}
Any element in $\Sym_n$ is a product of transpositions. The elements of order 2 are just the products of disjoint transpositions, which in this case commute with each other. We have to count all possible products of disjoint transpositions. 
For instance, the elements of order 2 in $\Sym_6$ are of the form $(i_1 i_2)$, $(i_1 i_2)(i_3 i_4)$ or $(i_1 i_2)(i_3 i_4)(i_5 i_6)$ with 
$i_1, i_2, i_3, i_4, i_5, i_6$ all different elements in $\{1,2,3,4,5,6\}$; and there are $\tbinom 62 + \tfrac{1}{2!}\tbinom 62 \tbinom 42 + \tfrac{1}{3!}\tbinom 62 \tbinom 42 \tbinom 22$ of them. Thus, one can check that the number of elements of order $\le 2$ in $\Sym_n$ for $n\ge 3$ is given by
\begin{equation} \label{k2Sn}
k_2(\Sym_n) = 1 + \sum_{0 \le k \le t-1}  \tfrac{1}{(k+1)!} \tbinom{n}{2} \tbinom{n-2}{2} \tbinom{n-4}{2} \cdots \tbinom{n-2k}{2}, 
\qquad n=2t, 2t+1,
\end{equation}
which can be written more succinctly as 
$k_2(\Sym_n) = n! s_n$. Hence, $M(\Sym_n)=B_{k(\Sym_n)}$ where we have
$k(\Sym_n) = \tfrac 12 \{ n!+k_2(\Sym_n) \}-1= \tfrac 12 n!(s_n+1)-1$, as desired.

Assuming $n\ge 4$, the elements of order 2 in $\mathbb{A}_n$ are the products of an even number of disjoint transpositions. 
It is clear that $k_2(\mathbb{A}_n)$ is obtained from expression \eqref{k2Sn} but summing only over odd indices. That is,   
as before $k_2(\mathbb{A}_n) = n! a_n$. 
Hence, $M(\mathbb{A}_n)=B_{k(\mathbb{A}_n)}$ where $k(\mathbb{A}_n)=\tfrac 12 (\tfrac{n!}2+k_2(\mathbb{A}_n))-1= \frac{n!}2(a_n+\frac 12)-1$, and the result follows.
\end{proof}

\subsubsection*{Semidirect products}
Recall that if $G$ and $H$ are groups and $\varphi: H \rightarrow \mathrm{Aut}(G)$ is a homomorphism we have the semidirect product 
$G \rtimes_\varphi H$ which is the set $G\times H$ with the product given by $(g_1,h_1)(g_2,h_2)= (g_1 \varphi_h(g_2), h_1 h_2)$, where $\varphi_h$ stands for the morphism $\varphi(h)$.
When $\varphi$ is natural or the only one we omit it from the notation.

It is difficult to count the number of metrics on semidirect products $G \rtimes_\varphi H$, unless we know the function $\varphi$ explicitly. In general we can only estimate this number, although in some particular cases we can compute it explicitly. By \eqref{kG2} we have 
\begin{equation} \label{k2GsxH}
k(G \rtimes_\varphi H) = \tfrac 12 \{ |G||H| + k_2(G \rtimes_\varphi H) \} -1.
\end{equation}
Now, since $(g,h)^2 = (g\varphi_h(g),h^2)$ we obtain 
\begin{align*}
k_2 (G \rtimes_\varphi H) &= \#\{(g,h) \in G \times H : (g \varphi_h(g), h^2) =(e_G,e_H) \}    \\
						 &= \sum_{h \in H, \,h^2=e_H} \#\{ g\in G : g\varphi_h(g) = e_G \}.
\end{align*}
Separating the contribution of the identity element $e_H$, since $\varphi_{e_H}= id$, we have
\begin{equation} \label{k semid}
k_2(G \rtimes_\varphi H) = k_2(G) + \sum_{h\in H^\times, \, h^2=e_H} 
\#\{ g\in G : g\varphi_h(g) = e_G \}.
\end{equation}
Since $0\le \#\{ g\in G : \varphi_h(g) = g^{-1} \} \le |G|$ we have that
$$k_2(G) \le k_2(G \rtimes_\varphi H) \le k_2(G) + |G|(k_2(H)-1).$$

Applying these inequalities in \eqref{k2GsxH}, after some trivial computations we obtain
\begin{equation} \label{kGsxH bounds}
\tfrac 12 |G|(|H|-1) + k(G) \le k(G \rtimes_\varphi H) \le k(H)|G| + k(G).
\end{equation}
Notice that both equalities hold if and only if $\tfrac 12 (|H|-1)=k(H)$, that is if and only if $k_2(H)=1$ which happens if and only if $H$ is of odd order. Thus, if $H$ is of odd order then we have 
\begin{equation} \label{kGsxH eq}
k(G \rtimes H )= \tfrac 12 |G|(|H|-1) +k(G).
\end{equation}
In particular, if $G$ is also of odd order, then 
$k(G \rtimes H) = \tfrac 12 (|G||H|)-1 = k(G \times H)$.

We now consider the case of $G \rtimes_\varphi \Z_2$ where $G$ is an abelian group. By \eqref{k semid}, we have 
\begin{equation} \label{k semidZ2}
	k_2(G \rtimes_\varphi \Z_2) = k_2(G) + \#\{ g\in G : g\varphi_1(g) = e_G \}.
\end{equation}
In the particular case of the generalized dihedral group $\D(G)=G\rtimes \Z_2$ of an abelian group $G$, with $G \ne \Z_2^k$
since otherwise $\D(G)=G$, where $\varphi_1$ is the inversion (see Definition \ref{Dihedral}) we have  
$k_2(G\rtimes \Z_2)= k_2(G)+|G|$. Thus, 
\begin{equation} \label{DG}
k(\D(G)) = \tfrac 12 (3|G|+k_2(G))-1
\end{equation}
for any abelian group $G \ne \Z_2^k$. If $G=\Z_n$ we get the dihedral group $\D_n$ and \eqref{DG} coincides with ($a$) in Proposition~\ref{prop Ms}.

Notice also that if we take $G=\Z_{2^{n-1}}$ and consider the semidirect products $G\rtimes_{\varphi^\pm} \Z_2$ where the morphisms $\varphi^\pm : \Z_2 \rightarrow \mathrm{Aut}(\Z_{2^{n-1}})$ are given by $x\mapsto x^{2^{n-2}\pm 1}$ we get the quasidihedral groups $Q\D_n^\pm$ and that \eqref{DG} in this case coincides with ($c$) in Proposition~\ref{prop Ms}.

\section{Bi-invariant metrics on groups} \label{S5b}
We now study the number of bi-invariant metrics on groups. 
Let $G$ be a group, not necessarily finite. We will denote the number (if it is finite) of bi-invariant metrics on $G$ by $M^*(G)$. Clearly, we have
\begin{equation} \label{MG<=M*G}
M^*(G) \le M(G).
\end{equation}

Let $C(G) = \{C_g:g\in G\}$ be the set of conjugacy classes of $G$ where 
$$C_g=\{hgh^{-1}: h \in G\}$$ 
is the conjugacy class of an element $g \in G$.
An element $g$ of $G$ is called \textit{real} if it is conjugate to its inverse, i.e.\@ if there is some $h\in G$ such that 
$hgh^{-1}=g^{-1}$. 
A conjugacy class $C_g$ is \textit{real} if $C_g=C_{g^{-1}}$.
We denote by $c(G)=\#C(G)$ the number of conjugacy classes of $G$ and by $c_2(G)$ the number of real conjugacy classes in $G$.
From now on, when we write $c(G)$ or $c_2(G)$ we will assume that $G$ has a finite number of conjugacy classes. If $P_0,P_1,\ldots,P_s$ is a conjugate partition of $G$ then $g\in P_i$ implies that $C_g \subset P_i$. 

We are now in a position to state the next result.
\begin{prop} \label{No bi-inv}
	If $G$ is a finite group, $M^*(G)=B_{k^*(G)}$ with
	\begin{equation} \label{kbiG}
	k^*(G) = \tfrac 12 \{ c(G) + c_2(G)\} -1. 
	\end{equation}
\end{prop}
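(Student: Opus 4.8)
The plan is to mimic the proof of Proposition~\ref{CardMG}, replacing elements of $G$ by conjugacy classes. By the correspondence \eqref{correspondencia2}, $M^*(G)$ equals the number of unitary symmetric conjugate partitions of $G$. First I would observe that a conjugate partition $\{P_i\}$ of $G$ is precisely the same datum as a partition of the (finite) set $C(G)$ of conjugacy classes: each $P_i$ is a union of conjugacy classes, and conversely any partition of $C(G)$ yields a conjugate partition of $G$ by taking unions. Under this identification, $\mathcal P$ is unitary if and only if $\{C_e\}$ is a block (since $\{e\}=C_e$ is itself a conjugacy class, and disjointness of the parts forces the part containing $e$ to be exactly $\{e\}$), while $\mathcal P$ is symmetric, i.e.\ $P_i=P_i^{-1}$ for all $i$, if and only if $C_g$ and $C_{g^{-1}}=C_g^{-1}$ always lie in the same block.

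Next I would set up the counting. Inversion $C_g\mapsto C_{g^{-1}}$ is an involution on $C(G)$ whose fixed points are exactly the real conjugacy classes; there are $c_2(G)$ of these (including $C_e$), and the remaining $c(G)-c_2(G)$ classes are permuted in $2$-cycles, so $c(G)-c_2(G)$ is even and there are $\tfrac12(c(G)-c_2(G))$ such pairs. A unitary symmetric partition of $C(G)$ is then the same as an arbitrary partition of the set obtained by deleting $C_e$, keeping each non-identity real class as a single point, and collapsing each inversion-pair $\{C_g,C_{g^{-1}}\}$ to a single point. This set has
\[
(c_2(G)-1)+\tfrac12\big(c(G)-c_2(G)\big)=\tfrac12\{c(G)+c_2(G)\}-1=k^*(G)
\]
elements. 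Hence the number of unitary symmetric conjugate partitions of $G$ equals the number of partitions of a $k^*(G)$-element set, which is $B_{k^*(G)}$, and the proposition follows.

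I do not anticipate a genuine obstacle here; the only point requiring a little care is verifying that $P_i=P_i^{-1}$ at the level of subsets of $G$ is equivalent to inversion-symmetry of the induced partition of $C(G)$, which reduces to the fact that $C_g^{-1}=C_{g^{-1}}$ for every $g\in G$. It is also worth recording, exactly as in the remark following Proposition~\ref{CardMG}, that $k^*(G)$ is automatically a nonnegative integer because $c(G)-c_2(G)$ is even, so $B_{k^*(G)}$ makes sense.
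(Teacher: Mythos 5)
Your proposal is correct and follows essentially the same route as the paper: both reduce, via the correspondence \eqref{correspondencia2}, to counting unitary symmetric conjugate partitions, identify these with arbitrary partitions of the non-identity parts of the finest such partition (real classes as singletons, non-real classes paired with their inverse classes), and count $(c_2(G)-1)+\tfrac12(c(G)-c_2(G))=k^*(G)$ such parts to get $B_{k^*(G)}$. Your explicit remark that $c(G)-c_2(G)$ is even, so $k^*(G)$ is a well-defined integer, is a nice touch but the argument is the same as the paper's.
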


\begin{proof}
	Consider the finest unitary symmetric conjugate partition $\mathcal{P}=\{P_0, P_1, \ldots,P_s\}$ of $G$. Every unitary symmetric conjugate partition of $G$ is obtained by taking the unions of some parts of $\mathcal{P}$. Hence, the number of unitary symmetric conjugate partitions of $G$ is given by the number of partitions of $\{1,2,\ldots,s\}$. We proceed similarly as in the proof of Proposition \ref{CardMG}. We have 
	\begin{equation} \label{k*G} 
	k^*(G) = \tfrac{1}{2} \# \big\{ g\in G : C_g \ne C_{g^{-1}} \big\} + \#\big\{ g\in G : C_g = C_{g^{-1}} \big\}-1.
	\end{equation}
	Since $c(G) = c_2(G) + \#\{ g\in G : C_g \ne C_{g^{-1}} \}$, we have that 
	$k^*(G) = \tfrac 12 \{ c(G)-c_2(G) \} + c_2(G) -1$, from where \eqref{k*G} follows.
	Thus, we finally obtain that $M^*(G)=B_{k^*(G)}$. 
\end{proof}

We know that if $G$ is a finite abelian group then right-invariant, left-invariant and bi-invariant metrics are the same. We now check that 
$M(G)=M^*(G)$, i.e.\@ equality holds in \eqref{MG<=M*G}.
If $G$ is abelian of order $n$, then $c(G)=n$ (since $C_g(G)=\{g\}$) for every $g\in G$) and $c_2(G)=1+k_2(G)$ since the only real elements are the identity and the involutions (order 2 elements). Thus, from \eqref{kG2} and \eqref{kbiG} we have 
$$k^*(G)=\tfrac{n+c_2(G)}2-1 = \tfrac{n-1+k_2(G)}2 = k(G)$$ 
as we wanted to show.

\begin{rem} \label{rem bi odd}
	If $|G|$ is odd, then it has no real elements other than the identity. In fact, if $x\in G$ is real then $yxy^{-1}=x^{-1}$ for some $y\in G$. Hence $yx^ky^{-1}=x^{-k}$ for every $k$. Since $x$ has finite order we have $x^{-1}=x^k$ for some $k$ and thus we have $yx^{-1}y^{-1}=x$. This implies that $y^2xy^{-2}=yx^{-1}y^{-1} =x$, so $x$ and $y^2$ commute, but since both $x,y$ have odd order then they must also commute.
	Thus, in this case we have that 
	$$k^*(G)=\tfrac 12 (c(G)-1).$$
	For instance, the smallest non-abelian group of odd order is $G=\Z_7 \rtimes \Z_3$, which has 5 conjugacy classes. Then $M(G)=B_{10}=115.975$ and $M^*(G)=B_2=2$. 
\end{rem}

\subsubsection*{Ambivalent groups}
If every element of $G$ is real, the group $G$ is called \textit{ambivalent}.
In this case $c(G)=c_2(G)$ and, hence 
\begin{equation} \label{kGcG}
k^*(G)= c(G)-1
\end{equation}
for ambivalent groups $G$.
By the previous remark, $G$ must have even order. 
Some families of groups, such as dihedral, generalized quaternions or symmetric, are known to be ambivalent.

Now we give the analogous of Proposition \ref{prop Ms} for bi-invariant metrics. 
\begin{prop} \label{prop M*s}
	The number of bi-invariant metrics of the dihedral groups $\D_n$, dicyclic groups $\Q_{4n}$ and quasidihedral groups $Q\D_n^\pm$ are given by: 
	\begin{enumerate}[$(a)$]
		\item $M^*(\D_{2k+1}) = B_{k+1}$ for $k\ge 1$ and $M(\D_{2k}) = B_{k+2}$ for $k\ge 2$. \sk
		
		\item $M^*(\Q_{4n}) = B_{n+2}$ for $n$ even and $M^*(\Q_{4n}) =B_{n+1}$ for $n$ odd. \sk 
		
		\item $M^*(Q\D_n^-)=B_{3 \cdot 2^{n-4}+2}$ and $M(Q\D_n^+)=B_{5\cdot 2^{n-4}+1}$ for $n\ge 4$.
	\end{enumerate}
\end{prop}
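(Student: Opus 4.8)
The plan is to reduce everything to Proposition~\ref{No bi-inv}: since $M^*(G)=B_{k^*(G)}$ with $k^*(G)=\tfrac12\{c(G)+c_2(G)\}-1$ by \eqref{kbiG}, it suffices to compute, for each of the three families, the number $c(G)$ of conjugacy classes and the number $c_2(G)$ of real conjugacy classes. For the dihedral and dicyclic families I will use the presentations \eqref{Dn def} and \eqref{dycs rels}, and for the quasidihedral groups the presentation \eqref{qdns rels} together with the explicit list of elements \eqref{qdns els}.

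\textbf{Dihedral and dicyclic groups.} Dihedral groups are ambivalent --- each rotation $a^j$ is conjugated to $a^{-j}$ by any reflection, and reflections have order $2$ --- so \eqref{kGcG} gives $k^*(\D_n)=c(\D_n)-1$. The conjugacy classes of $\D_n$ are classical: for $n=2k+1$ they are $\{e\}$, the $k$ rotation pairs $\{a^j,a^{-j}\}$ with $1\le j\le k$, and one class of all $n$ reflections, so $c(\D_{2k+1})=k+2$ and $k^*(\D_{2k+1})=k+1$; for $n=2k$ they are $\{e\}$, $\{a^k\}$, the $k-1$ rotation pairs, and the two classes of reflections, so $c(\D_{2k})=k+3$ and $k^*(\D_{2k})=k+2$, giving $(a)$. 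For $\Q_{4n}$, writing each element uniquely as $a^kb^j$ with $0\le k<2n$, $j\in\{0,1\}$, the relations \eqref{dycs rels} show that conjugation by $a^i$ sends $a^kb\mapsto a^{k+2i}b$ and that conjugation by $b$ inverts $\langle a\rangle$; hence the conjugacy classes are $\{e\}$, $\{a^n\}$, the $n-1$ pairs $\{a^j,a^{-j}\}$ with $1\le j\le n-1$, and the two classes $\{a^kb:k\text{ even}\}$, $\{a^kb:k\text{ odd}\}$ inside $\langle a\rangle b$, so $c(\Q_{4n})=n+3$ for every $n$. Every element of $\langle a\rangle$ is real, whereas from $(a^kb)^{-1}=a^{k+n}b$ one sees that the two coset classes are interchanged by inversion when $n$ is odd and each is real when $n$ is even; thus $c_2(\Q_{4n})=n+3$ for $n$ even and $c_2(\Q_{4n})=n+1$ for $n$ odd, and \eqref{kbiG} yields $k^*(\Q_{4n})=n+2$ and $k^*(\Q_{4n})=n+1$ respectively, which is $(b)$.

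\textbf{Quasidihedral groups.} Here I will work out, for each sign, the action of conjugation by the powers of $x$ and by $y$ on the list \eqref{qdns els}, the crucial point being that $j\cdot 2^{n-2}$ reduces to $0$ or to $2^{n-2}$ modulo $2^{n-1}$ according to the parity of $j$. For $Q\D_n^-$ this produces, inside $\langle x\rangle$, the classes $\{e\}$, $\{x^{2^{n-2}}\}$ together with $2^{n-2}-1$ pairs, and inside $\langle x\rangle y$ exactly two classes (one per parity of the exponent), so $c(Q\D_n^-)=2^{n-2}+3$; among these the real ones are $\{e\}$, $\{x^{2^{n-2}}\}$, the $2^{n-3}-1$ pairs $\{x^j,x^{-j}\}$ with $j$ even, and both coset classes, so $c_2(Q\D_n^-)=2^{n-3}+3$ and \eqref{kbiG} gives $k^*(Q\D_n^-)=2^{n-3}+2^{n-4}+2=3\cdot 2^{n-4}+2$. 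For $Q\D_n^+$ one finds that the $2^{n-2}$ powers $x^j$ with $j$ even are central, the odd ones form $2^{n-3}$ pairs, and $\langle x\rangle y$ decomposes into $2^{n-2}$ classes of size $2$, so $c(Q\D_n^+)=2^{n-1}+2^{n-3}$; the only real classes turn out to be $\{e\}$, $\{x^{2^{n-2}}\}$, $\{y,x^{2^{n-2}}y\}$ and $\{x^{2^{n-3}}y,x^{3\cdot 2^{n-3}}y\}$, so $c_2(Q\D_n^+)=4$ and \eqref{kbiG} gives $k^*(Q\D_n^+)=2^{n-2}+2^{n-4}+1=5\cdot 2^{n-4}+1$, which is $(c)$.

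The main obstacle will be the bookkeeping in the quasidihedral case: deciding which of the roughly $2^n$ exponents produce two powers lying in a common class, and which classes coincide with the class of their inverse, both of which come down to solving a handful of linear congruences modulo $2^{n-1}$ and fixing consistent coset representatives; the dihedral and dicyclic computations, by contrast, are entirely routine.
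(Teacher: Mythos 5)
Your proposal is correct and follows essentially the same route as the paper: reduce to Proposition~\ref{No bi-inv} via $k^*(G)=\tfrac12\{c(G)+c_2(G)\}-1$, use ambivalence for the dihedral family, and compute $c$ and $c_2$ for $\Q_{4n}$ and $Q\D_n^\pm$ by listing conjugacy classes from the presentations \eqref{Dn def}, \eqref{dycs rels}, \eqref{qdns rels} and checking which classes are real (parity of the exponent in the reflection coset for $\Q_{4n}$, and congruences modulo $2^{n-1}$ for the quasidihedral groups). All the class counts and real-class counts you assert agree with the paper's, so the argument goes through.
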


\begin{proof} \label{ambi}
($a$) 
	It is known that dihedral groups are ambivalent and that $c(\D_n)=\frac{n+3}2$ if $n$ is odd and $c(\D_n)=\frac{n+6}2$ if $n$ is even. In fact, the conjugacy classes are given by $\{e\}$, $\{a,a^{-1}\}$, $\{a^2,a^{-2}\},\ldots, \{a^{\frac{n-1}2},a^{-\frac{n-1}2} \}$ and $\{b,ab, a^2b, \ldots, a^{n-1}b \}$ for $n$ odd and  
	$\{e\}$, $\{a^{\frac n2}\}$, $\{a,a^{-1}\}$, \linebreak $\{a^2,a^{-2}\}, \ldots, \{a^{\frac{n-2}2},a^{-\frac{n-2}2} \}$, 
	$\{b,a^2b, a^4b, \ldots, a^{n-2}b \}$ and $\{ab,a^3b, a^5b, \ldots, a^{n-1}b \}$ for $n$ even.
	Hence, by \eqref{kGcG} we have 
	\begin{equation} \label{kGDn}
	k^*(\D_n) = \begin{cases} 
	\frac{n+1}2 & \qquad \text{if $n$ is odd}, \\[1mm]
	\frac{n+4}2 & \qquad \text{if $n$ is even}. 
	\end{cases}
	\end{equation}
	Thus, $M^*(\D_n)=B_{\frac{n+1}2}$ for $n$ odd and $M^*(\D_n)=B_{\frac{n+4}2}$ for $n$ even.

\noindent ($b$)	
For every $n\ge 2$ we have the dicyclic groups $\Q_{4n}$ of order $4n$ as defined in \eqref{dycs rels}. 
We have that $c(\Q_{4n}) = n+3$. In fact, the conjugacy classes of $\Q_{4n}$ are 
$C_e=\{e\}$, $C_{a^n}=\{a^n\}$, $C_{a^k}=\{a^k, a^{2n-k}\}$ for $k=1,\ldots, n-1$, $C_b=\{b,a^2b, a^4b, \ldots, a^{2n-2}b\}$ and $C_{ab}=\{ab,a^3b, a^5b, \ldots,a^{2n-1}b\}$.
The $n+1$ classes $C_e$, $C_{a}$, $C_{a^2}, \ldots, C_{a^n}$ are clearly real.
The class $C_{b}$ and $C_{ab}$ are real if and only if $n$ is even. 
Indeed, $(a^kb)(a^mb)=a^{k-m+n}$ hence $(a^kb)^{-1}=a^{k-n}b$. Therefore, if $n$ is even, $k-n$ has the same parity as $n$, and thus the inverse $(a^kb)^{-1}$ is in the same class as $a^kb$. In this way, $c_2(\Q_{4n})=n+3$ if $n$ is even and $c_2(\Q_{4n})=n+1$ if $n$ is odd.
If $n$ is even, then $\Q_{4n}$ is ambivalent and $k^*(\Q_{4n})=c(\Q_{4n})-1$. If $n$ is odd then 
we have $k^*(\Q_{4n})=\tfrac 12\{ c(\Q_{4n})+c_2(\Q_{4n})\}-1$.
Finally, we have obtained that 
\begin{equation} \label{kGQn}
	k^*(\Q_{4n})= \begin{cases} 
	n+2 & \qquad \text{if $n$ is even}, \\[.5mm] n+1 & \qquad \text{if $n$ is odd}, 
	\end{cases}
\end{equation} 
for every $n\ge 1$. 

\goodbreak 

\noindent ($c$) We study the conjugacy classes of the groups $Q\D_n^{\pm}$ simultaneously, using the presentations \eqref{qdns rels}. 
By \eqref{qdns els}, the elements of $Q\D_n^{\pm}$ are of the form $x^k$ or $yx^k$ with $k=0,\ldots, 2^{n-1}-1$. 
The conjugates of the elements $x^k$ are $x^\ell x^k x^{-\ell}=x^k$ or $(yx^\ell)x^k (yx^\ell)^{-1}= yx^\ell x^k x^{-\ell} y = x^{k(2^{n-2}\pm 1)}$ for any $\ell$, where $\pm$ corresponds to the group $Q\D_n^\pm$. 
Hence, we have the conjugacy classes $C_e=\{e\}$ and $C_{x^k} = \{x^k, x^{k(2^{n-2}\pm 1)}\}$ for $k=0,\ldots, 2^{n-1}-1$.
The conjugates of $yx^k$ are of the form
\begin{gather*}
x^\ell (yx^k) x^{-\ell} = y(yx^\ell y) x^{k-\ell} = y x^{\ell (2^{n-2}\pm 1)} x^{k-\ell} = (yx^k) x^{\ell(2^{n-2}\pm 1-1)}, \\
(yx^\ell) (yx^k) (yx^{\ell})^{-1} = yx^\ell (y x^{k-\ell}y^{-1}) = 
y x^\ell x^{(k-\ell) (2^{n-2}\pm 1)}  = (yx^k) x^{(k-\ell)(2^{n-2}\pm 1-1)}.
\end{gather*}
%\end{eqnarray*}
Thus, $C_{yx^k} = \{ yx^k x^{\ell(2^{n-2}\pm 1-1)}, yx^k x^{(k-\ell)(2^{n-2}\pm 1-1)} \}_{\ell\ge 0}$ for $k=0,\ldots, 2^{n-1}-1$.
However, note that $C_{yx^k} = C_y \cdot x^k$ for every $k$ and that 
$C_y = \{ yx^{\ell(2^{n-2}\pm 1-1)}, yx^{k(2^{n-2}\pm 1-1)} \}_{\ell \ge 0}$. 

Summing up, the conjugacy classes of $Q\D_n^-$ (1st row) and $Q\D_n^+$ (2nd row) are %respectively 
given by
\begin{alignat*}{3}
& C_e=\{e\}, \qquad && C_{x^k} = \{x^k, x^{k(2^{n-2} - 1)}\}, \qquad && 
C_{yx^k} = \{ yx^k x^{\ell(2^{n-2}-2)}, yx^k x^{(k-\ell)(2^{n-2}-2)} \}_{\ell \ge 0}, \\  
& C_e=\{e\}, \qquad && C_{x^k} = \{x^k, x^{k(2^{n-2} + 1)}\}, \qquad && 
C_{yx^k} = \{ yx^{\ell(2^{n-2})}, yx^{k2^{n-2}} \}_{\ell \ge 0},
\end{alignat*}
for $k=0,\ldots, 2^{n-1}-1$ (there are some repetitions).

For instance, for $n=4$ the conjugacy classes (real classes are underlined) 
of $Q\D_4^-$ are $\underline{\{e\}}$, $\{x,x^3\}$, \underline{$\{x^2, x^6\}$}, \underline{$\{x^4\}$}, $\{x^5, x^7\}$, \underline{$\{y,yx^2,yx^4, yx^6\}$} and  \underline{$\{yx, yx^3, yx^5, yx^7\}$} while the conjugacy classes of $Q\D_4^+$ are 
\underline{$\{e\}$}, $\{x,x^5\}$, $\{x^2\}$, $\{x^3, x^7\}$, \underline{$\{x^4\}$}, $\{x^6\}$, \underline{$\{y,yx^4\}$}, $\{yx, yx^5\}$, \underline{$\{yx^2,yx^6\}$} and $\{yx^3, yx^7\}$. Thus $Q\D_4^-$ has 7 conjugacy classes (5 real) and $Q\D_4^+$ has 10 conjugacy classes (4 real). 
For $n=5$ we have  $\underline{\{e\}}$, $\{x,x^7\}$, \underline{$\{x^2, x^{14}\}$}, $\{x^3,x^5\}$, \underline{$\{x^4, x^{12}\}$},  \underline{$\{x^6, x^{10}\}$}, \underline{$\{x^8\}$}, $\{x^9, x^{15}\}$, $\{x^{11}, x^{13}\}$, 
\underline{$\{y, yx^2, yx^4, yx^6, yx^8, yx^{10}, yx^{12}, yx^{14}\}$} and  \underline{$\{yx, yx^3, yx^5, yx^7, yx^9, yx^{11}, yx^{13}, yx^{15}\}$} for $Q\D_5^-$ while  
\underline{$\{e\}$}, $\{x,x^9\}$, $\{x^2\}$, $\{x^3, x^{11}\}$, $\{x^4\}$, $\{x^5, x^{13}\}$, $\{x^7, x^{15}\}$, $\{x^6\}$, \underline{$\{x^8\}$}, $\{x^{10}\}$, $\{x^{12}\}$, $\{x^{14}\}$, \underline{$\{y,yx^8\}$}, $\{yx, yx^9\}$, $\{yx^2,yx^{10}\}$, $\{yx^3, yx^{11}\}$, \underline{$\{yx^4,yx^{12}\}$}, $\{yx^7, yx^{15}\}$ for $Q\D_5^+$. Thus $Q\D_5^-$ has 11 conjugacy classes (7 real) and $Q\D_5^+$ has 20 conjugacy classes (8 real). 

One can check that in general, $Q\D_n^-$ has $1+2^{n-2}+2$ conjugacy classes given by $C_e$, all the $C_{x^k}$'s with $k=1,\ldots,2^{n-1}-1$ (which are of the form $\{x^i,x^j\}$ with $8\mid i+j$ except for $\{x^{2^{n-2}}\}$), $C_y$ and $C_{yx}$ out of which $1+2^{n-3}+2$ are real, namely $C_e$, those of the form $C_{x^{2k}}$ and $C_y$, $C_{yx}$.   
On the other hand, $Q\D_n^+$ has $(2^{n-2}+2^{n-3})+2^{n-2}$ conjugacy classes given by $C_{x^k}$ with $k\ge 0$ and $C_y, C_{yx}, \ldots, C_{yx^{2^{n-2}-1}}$, out of which $4$ are real, namely $C_e$, $C_{x^{2^{n-2}}}$, $C_y$ and $C_{yx^{2^{n-3}}}$.  
		
Finally, we have that $c(Q\D_n^-)=2^{n-2}+3$ and $c_2(Q\D_n^-) =2^{n-3}+3$ and thus we get 
$$k^*(Q\D_n^-) = \tfrac 12 (2^{n-2}+3 + 2^{n-3}+3)-1 = 3\cdot 2^{n-4}+2.$$
Also, $c(Q\D_n^+)=(2^{n-2}+2^{n-3})+2^{n-2}=5\cdot 2^{n-3}$ and $c_2(Q\D_n^-) = 4$, and hence 
we have $k^*(Q\D_n^-) = \frac{5\cdot 2^{n-3}+4}{2}-1 = 5\cdot 2^{n-4}+1$, and the proof is complete.
\end{proof}

\begin{exam} \label{ambi3}
	The symmetric group $\Sym_n$ is ambivalent for any $n$. 
	It is a classic result that the number of conjugacy classes of $\Sym_n$ is given by the number $p(n)$ of unordered integer partitions.  
	Thus $k^*(\Sym_n) = p(n)-1$ and hence
	$$M^*(\Sym_n)=B_{p(n)-1}.$$
	Recall that $p(1)=1$, $p(2)=2$, $p(3)=3$, $p(4)=5$, $p(5)=7$ and $p(6)=11$. 
	So, for instance, we have $M^*(\Sym_3)=B_2=2$, $M^*(\Sym_4)=B_4=15$, $M^*(\Sym_5)=B_6=203$ and 
	$M^*(\Sym_6)=B_{10}=115.975$. 
	
	The conjugacy classes of the alternating groups are a bit more involved. It is known that 
	$$c(\mathbb{A}_n)= 2 f_n + \tfrac{p(n)-f_n}2$$ 
	where $f_n$ is the number of self-conjugate integer partitions, that is the number of self-conjugate Ferrer diagrams of size $n$.
	The alternating groups $\mathbb{A}_n$ are ambivalent only for $n=1,2,5,6,10,14$.
	Thus, $k^*(\mathbb{A}_n)= 2 f_n + \tfrac{p(n)-f_n}2 -1$ 
	for $n=1,2,5,6,10,14$.
	\hfill $\lozenge$
\end{exam}

\begin{exam} \label{SL2q}
	Consider the special group $\mathrm{SL}_2(\ff_q)$ of matrices $2 \times 2$ over a finite field $\ff_q$ of $q$ elements. 
	That is $\mathrm{SL}_2(\ff_q)=\{ (\begin{smallmatrix} a& b \\ c & d \end{smallmatrix}) : a,b,c,d \in \ff_q, ad-bc=1\}$.
	The group $\mathrm{SL}_2(\ff_q)$ has order $q^3-q$. If $q$ is odd, this group has only one element of order $2$, $-I$, and thus 
	$k(\mathrm{SL}_2(\ff_q))=\frac 12(q^3-q)$ and 
	$$M(\mathrm{SL}_2(\ff_q)) = B_{\frac 12 (q^3-q)}.$$ 
	For instance, $M(\mathrm{SL}_2(\ff_3))=B_{12}= 4.213.597$, $M(\mathrm{SL}_2(\ff_5))=B_{60}$ and $M(\mathrm{SL}_2(\ff_{3^2}))=B_{360}$. 
	
	The number of conjugacy classes of $\mathrm{SL}_2(\ff_q)$ is given by $q+4$ for $q$ odd and by $q+1$ if $q$ is even.
	It is known that $\mathrm{SL}_2(\ff_q)$ is ambivalent if and only if $-1$ is a square in $\ff_q$, which in turn happens if and only if $q$ is even or if $q\equiv 1 \pmod 4$. 
	Suppose that $q$ is odd with $q\equiv 1 \pmod 4$. Then, $k^*(\mathrm{SL}_2(\ff_q)) = \tfrac 12(q+4-1)$ by \eqref{kGcG} and hence 
	$$M^*(\mathrm{SL}_2(\ff_q)) = B_{\frac{q+3}2}.$$ 
	For instance, $M^*(\mathrm{SL}_2(\ff_5)))=B_4=52$ and $M^*(\mathrm{SL}_2(\ff_{3^2}))=B_6=203$. 
	\hfill $\lozenge$  
\end{exam}

Note that $k^*(G)$ is the cardinal of the finest bi-invariant (unitary, symmetric and conjugate) partition $\mathcal{P}^*$, 
and every bi-invariant partition is some union of parts of this partition. 
In particular, we have that  
$$k^*(G) \leq k(G) \leq n-1,$$ 
where $n=|G|$, with both equalities holding only for $G=\Z_2^k$ for $k\ge 1$ (see Example~\ref{coro k(G)}~($ii$)).
One also have that $k^*(G) \leq c(G) -1$ and every bi-invariant metric satisfies 
$$|\mathcal{P}^*| \le k^*(G) \leq c(G)-1.$$ 
Hence, the number of bi-invariant metrics $M^*(G)$ on $G$ is bounded by the Bell number $B_{c(G)-1}$.

We will need the following results.
\begin{lema} \label{cc2prod}
	If $H,K$ are groups then $c(H \times K)  = c(H)c(K)$, $c_2(H \times K)  = c_2(H)c_2(K)$ and %hence
	\begin{equation} \label{k*HxK}
	k^*(H \times K)  = \tfrac 12 \{ c(H)c(K) + c_2(H)c_2(K)\} -1.
	\end{equation}
	As a consequence, the product of ambivalent groups is ambivalent.
\end{lema}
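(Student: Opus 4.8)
The plan is to first establish the elementary fact that conjugacy classes in a direct product are exactly the products of conjugacy classes in the factors. For $(h,k)\in H\times K$ one has $(h',k')(h,k)(h',k')^{-1}=(h'hh'^{-1},k'kk'^{-1})$, so the conjugacy class of $(h,k)$ is $C_{(h,k)}=C_h\times C_k$. Thus the assignment $(C_h,C_k)\mapsto C_h\times C_k$ is a well-defined map $C(H)\times C(K)\to C(H\times K)$; it is surjective by the displayed identity and injective because a Cartesian product of nonempty sets determines each of its factors. Hence $c(H\times K)=c(H)c(K)$.

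Next I would check that reality is componentwise. Since $(h,k)^{-1}=(h^{-1},k^{-1})$ and $C_{(h,k)}=C_h\times C_k$, the class $C_{(h,k)}$ is real, i.e.\ $C_{(h,k)}=C_{(h^{-1},k^{-1})}$, precisely when $C_h\times C_k=C_{h^{-1}}\times C_{k^{-1}}$, which (again because a product of nonempty sets determines its factors) is equivalent to $C_h=C_{h^{-1}}$ and $C_k=C_{k^{-1}}$. So the bijection above restricts to a bijection between pairs of real classes and real classes of $H\times K$, giving $c_2(H\times K)=c_2(H)c_2(K)$. The formula for $k^*$ is then immediate from Proposition~\ref{No bi-inv}: substituting these two identities into $k^*(H\times K)=\tfrac12\{c(H\times K)+c_2(H\times K)\}-1$ from \eqref{kbiG} yields $k^*(H\times K)=\tfrac12\{c(H)c(K)+c_2(H)c_2(K)\}-1$.

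For the consequence, recall $G$ is ambivalent exactly when $c(G)=c_2(G)$. If $H$ and $K$ are ambivalent then $c(H\times K)=c(H)c(K)=c_2(H)c_2(K)=c_2(H\times K)$, so $H\times K$ is ambivalent; alternatively this also follows at once from the componentwise reality criterion applied to $C_{(h,k)}=C_h\times C_k$.

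I do not expect a genuine obstacle here: the whole argument rests on the single elementary set-theoretic observation that a Cartesian product of nonempty sets determines its factors, which is exactly what makes both the class decomposition a bijection and the reality test componentwise. The only care needed is to keep track of the fact that conjugacy classes are always nonempty so that this observation applies.
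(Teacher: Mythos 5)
Your proof is correct and follows essentially the same route as the paper: both start from the identity $C_{(h,k)}=C_h\times C_k$, deduce the product formulas for $c$ and $c_2$ via the componentwise reality criterion, and then substitute into \eqref{kbiG}. Your explicit remark that a Cartesian product of nonempty sets determines its factors just makes precise what the paper leaves implicit.
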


\begin{proof}
	Clearly, we have the identity $C_{(h,k)}= C_h \times C_k$ of conjugacy classes for every $(h,k) \in H \times K$ and hence 
	$c(H \times K)  = c(H)c(K)$. 
	Let us see that a real conjugacy class in the product $H \times K$ is the product of real conjugacy classes in $H$ and $K$. 
	The conjugacy class $C_{(h,k)}$ is real if and only if 
	$C_{(h,k)} = C_{(h,k)^{-1}}= C_{(h^{-1}, k^{-1})}$. 
	On the other hand, 
$C_h \times C_k = C_{(h,k)} =  C_{(h^{-1}, k^{-1})} = C_{h^{-1}} \times C_{k^{-1}}$.
	This implies that $C_h = C_{h^{-1}}$ and $C_k = C_{k^{-1}}$. The converse implication is analogous and thus we have that
	$c_2(H \times K)  = c_2(H)c_2(K)$. Equation \eqref{k*HxK} follows from \eqref{kbiG} and these product formulas.  
\end{proof}

\begin{lema} \label{kGxZ2r}
Let $G$ be a finite group. If $A$ is any ambivalent finite group then 
\begin{equation} \label{eq kGxkA}
k^*(G \times A)= k^*(A) (k^*(G) + 1)-1.
\end{equation}
In particular, we have
$k^*(G \times \Z_2^r)= 2^r (k^*(G) + 1)-1$
for any $r\in \N_0$.
\end{lema}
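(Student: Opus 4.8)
The plan is to reduce the statement to Lemma~\ref{cc2prod} together with the defining identity~\eqref{kbiG}, invoking ambivalence of $A$ only through the single fact that it forces $c(A)=c_2(A)$.

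First I would apply Lemma~\ref{cc2prod} with $H=G$ and $K=A$, which gives $c(G\times A)=c(G)c(A)$ and $c_2(G\times A)=c_2(G)c_2(A)$, so that by~\eqref{kbiG}
\[
k^*(G\times A)=\tfrac12\bigl\{c(G)c(A)+c_2(G)c_2(A)\bigr\}-1 .
\]
Next I would use that $A$ is ambivalent: every conjugacy class of $A$ equals the class of its inverse, hence $c(A)=c_2(A)$, and then~\eqref{kbiG} applied to $A$ reads $k^*(A)=\tfrac12\{c(A)+c_2(A)\}-1=c(A)-1$, i.e.\ $c(A)=c_2(A)=k^*(A)+1$. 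Substituting this into the previous display and pulling out the common factor $k^*(A)+1$ yields
\[
k^*(G\times A)=\tfrac12\,(k^*(A)+1)\bigl\{c(G)+c_2(G)\bigr\}-1=(k^*(A)+1)(k^*(G)+1)-1 ,
\]
the last equality being~\eqref{kbiG} once more, now read as $\tfrac12\{c(G)+c_2(G)\}=k^*(G)+1$.

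For the particular case I would first observe that $\Z_2^r$ is ambivalent: it is abelian, so each conjugacy class is a singleton, and every element is its own inverse, so all of its $2^r$ classes are real; thus $c(\Z_2^r)=c_2(\Z_2^r)=2^r$ and $k^*(\Z_2^r)=2^r-1$, whence $k^*(\Z_2^r)+1=2^r$. Plugging $A=\Z_2^r$ into the formula above then gives $k^*(G\times\Z_2^r)=2^r(k^*(G)+1)-1$; for $r=0$ this reads $k^*(G\times\{e\})=k^*(G)$, consistently.

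I do not expect a genuine obstacle: all the content is already packaged in Lemma~\ref{cc2prod} and the bookkeeping identity~\eqref{kbiG}. The only thing that needs care is the ``$-1$'' shift in the definition of $k^*$, so it is cleanest to carry the computation entirely in terms of $c(\cdot)$ and $c_2(\cdot)$ and translate back to $k^*$ only at the very end, via $c(A)=c_2(A)=k^*(A)+1$ and $\tfrac12\{c(G)+c_2(G)\}=k^*(G)+1$, rather than propagating $k^*$ through the multiplication.
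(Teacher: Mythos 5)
Your proof is correct and is essentially the paper's own argument: the paper's proof is a one-line appeal to Lemma~\ref{cc2prod} and the definition \eqref{kbiG}, which you have simply spelled out, using ambivalence of $A$ only via $c(A)=c_2(A)=k^*(A)+1$. Note that what you actually derive is $k^*(G\times A)=(k^*(A)+1)(k^*(G)+1)-1=c(A)\,(k^*(G)+1)-1$, which exposes a typo in the displayed equation \eqref{eq kGxkA} (its right-hand side should read $(k^*(A)+1)(k^*(G)+1)-1$), and your formula is the one consistent with the special case $k^*(G\times\Z_2^r)=2^r(k^*(G)+1)-1$ and with all later uses in the paper.
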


\begin{proof} 
	The result is a direct consequence of Lemma \ref{cc2prod} and the definition \eqref{kbiG} of $k^*(G)$.  
\end{proof}

\subsubsection*{Bi-invariance degree} 
We know that for abelian groups, the number of invariant and bi-invariant metrics is the same. However, from our previous propositions and examples, this seems difficult for non-abelian groups to hold. In Example \ref{Q8D4} we saw that the quaternions $\Q_8$ has the same number of invariant and bi-invariant metrics. Moreover, from Propositions \ref{prop Ms} and \ref{prop M*s} we see that this is the only dicyclic group $\Q_{4n}$ having this property.

 To measure the ratio of bi-invariant metrics on a finite group $G$ out of the invariant ones we define the 
\textit{bi-invariance degree} of $G$ by 
	\begin{equation} \label{bG} 
		b(G)=\frac{k^{*}(G)}{k(G)}.
	\end{equation}
It is clear that $0<b(G) \le 1$ and that $b(G)=1$ if and only if every invariant metric is also bi-invariant. 
By \eqref{kG2} and \eqref{kbiG} we have the expression 
	$$b(G)=\frac{c(G)+c_2(G)-2}{|G|+k_2(G)-2}.$$ 
By Propositions \ref{prop Ms} and \ref{prop M*s} we have 
\begin{gather*}
b(\D_{2k+1})=\tfrac{k+1}{3k+1}, \quad b(\D_{2k})=\tfrac{k+2}{3k}, \quad b(\Q_{4n})=\tfrac{n+2}{2n}, \quad 
b(Q\D_n^-) = \tfrac{3 \cdot 2^{n-4}+2}{5\cdot 2^{n-3}}, \quad b(Q\D_n^+) = \tfrac{5 \cdot 2^{n-4}+1}{2^{n-1}+1}.
\end{gather*}
Therefore, asymptotically we have 
	$b(\D_n) \simeq \frac 13$, $b(\Q_{4n}) \simeq \frac 12$, $b(Q\D_n^-) \simeq \frac{3}{10}$ and $b(Q\D_n^+) \simeq \frac 58$.

The following, one of the main results in the paper, characterizes all groups $G$ (finite or not) in which the invariant metrics are also bi-invariant (i.e.\@ with $b(G)=1$ for the finite case).

\goodbreak 
\begin{thm} \label{char bi}
	Every right (left) invariant metric on a group $G$ is also bi-invariant if and only if $G$ is abelian or $G=\Q_8 \times H$ where $H$ is an elementary abelian $2$-group. In particular, if $G$ is finite and non-abelian then $G=\Q_8 \times \Z_2^k$ for some $k \in \Z_{\ge 0}$ and $M(G)=M^*(G)=B_{5\cdot 2^k-1}$. 
\end{thm}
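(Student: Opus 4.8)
The plan is to show that the condition ``every invariant metric is bi-invariant'' is equivalent to the equality $k(G)=k^*(G)$ in the finite case, and more intrinsically to the statement that every unitary symmetric partition of $G$ is automatically conjugate; then to translate this into a structural condition on $G$ and finally classify the groups satisfying it. The key reformulation is: every symmetric partition is conjugate if and only if the finest symmetric partition is already conjugate, i.e.\@ the two-element sets $\{g,g^{-1}\}$ (and singletons $\{g\}$ for $g^2=e$) are each unions of conjugacy classes. Equivalently, for every $g\in G$ and every $h\in G$ we must have $hgh^{-1}\in\{g,g^{-1}\}$. So the problem reduces to classifying all groups $G$ in which
\begin{equation} \label{conj into inv}
hgh^{-1} \in \{g, g^{-1}\} \qquad \text{for all } g,h\in G.
\end{equation}

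First I would analyze condition \eqref{conj into inv}. Fixing $g$, the map $h\mapsto hgh^{-1}$ takes only the two values $g$ and $g^{-1}$, so the centralizer $C_G(g)$ has index $\le 2$ in $G$; in particular every subgroup $\langle g\rangle$ is normal, so $G$ is a \emph{Dedekind group} (every subgroup normal) — or, reading \eqref{conj into inv} directly, $G$ is what is classically called a group in which every element is ``conjugate only to itself or its inverse.'' By the Dedekind--Baer theorem, a non-abelian group all of whose subgroups are normal has the form $G=\Q_8\times B\times D$ where $B$ is an elementary abelian $2$-group and $D$ is an abelian group with all elements of odd order. I would then feed this structure back into \eqref{conj into inv}: in $\Q_8\times B\times D$, conjugation only affects the $\Q_8$ coordinate, and there $i^j i (i^j)^{-1}$ etc.\@ indeed lands in $\{x,x^{-1}\}$, so the $\Q_8\times B$ part is fine for any $B$ elementary abelian $2$-group; but I must rule out a nontrivial odd part $D$. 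If $d\in D$ has odd order $>1$, then $d$ is a central element not equal to its own inverse, so the partition refining $\{e\}$ and lumping everything else cannot be the issue — rather, consider the element $(i,1,d)\in\Q_8\times B\times D$; its inverse is $(-i,1,d^{-1})$, and conjugating by $(j,1,1)$ gives $(-i,1,d)$, which is neither $(i,1,d)$ nor $(-i,1,d^{-1})$ unless $d=d^{-1}$, i.e.\@ $d=1$. Hence $D$ is trivial and $G=\Q_8\times B$ with $B$ elementary abelian $2$-group, as claimed.

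The converse direction is the easy half: if $G$ is abelian the notions of right-invariant, left-invariant and bi-invariant metric coincide by the remarks in Section~\ref{S2}; if $G=\Q_8\times H$ with $H$ elementary abelian, one checks that in $\Q_8$ every element is conjugate only to itself or its inverse (the conjugacy classes are $\{1\},\{-1\},\{\pm i\},\{\pm j\},\{\pm k\}$, which coincide with the Lee parts, as already observed in Example~\ref{Q8D4}), and conjugation acts trivially on the $H$ factor, so \eqref{conj into inv} holds; therefore every unitary symmetric partition of $G$ is conjugate and, by Lemma~\ref{surjectivity} and the correspondence \eqref{correspondencia2}, every invariant metric is bi-invariant. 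For the final numerical assertion in the finite non-abelian case $G=\Q_8\times\Z_2^k$: since $k^*(G)=k(G)$ here, it suffices to compute $k(G)$ via Lemma~\ref{lemHxK}. One has $k_2(\Q_8)=2$ (namely $\pm1$) and $|\Q_8|=8$, so $k(\Q_8)=\tfrac12(8+2)-1=4$, and then $k(\Q_8\times\Z_2^k)=2^k(k(\Q_8)+1)-1=5\cdot 2^k-1$; thus $M(G)=M^*(G)=B_{5\cdot 2^k-1}$ by Propositions~\ref{CardMG} and \ref{No bi-inv}.

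The main obstacle is the structural classification step — getting from condition \eqref{conj into inv} to the explicit form $\Q_8\times B\times D$. The cleanest route is to invoke the Dedekind--Baer theorem on Hamiltonian groups directly, after observing that \eqref{conj into inv} forces $\langle g\rangle\trianglelefteq G$ for every $g$ (indeed $h\langle g\rangle h^{-1}=\langle hgh^{-1}\rangle=\langle g\rangle$ since $g^{-1}$ generates the same cyclic subgroup as $g$), hence every subgroup of $G$ is normal; this handles the general (possibly infinite) case as well, since Baer's theorem applies to arbitrary Hamiltonian groups. If one prefers a self-contained argument one would instead prove the Hamiltonian structure theorem en route, but I expect citing it is acceptable here. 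The only remaining care is the elimination of the odd-order factor $D$, which the explicit conjugation computation above settles, and checking the infinite case contributes nothing new beyond ``abelian or $\Q_8\times(\text{elementary abelian }2\text{-group})$,'' the $2$-group now allowed to be infinite.
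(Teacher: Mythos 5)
Your proposal is correct and follows essentially the same route as the paper: you reduce the hypothesis to the condition $hgh^{-1}\in\{g,g^{-1}\}$ (the paper extracts this from the three-part partitions $\{\{e\},\{x,x^{-1}\},G\smallsetminus\{e,x,x^{-1}\}\}$, which is also the cleanest way to justify your ``finest partition'' reformulation for arbitrary, possibly very large, groups), deduce that all subgroups are normal, invoke Baer's classification, kill the odd-order factor by the same explicit conjugation, and verify the converse and the count $k(\Q_8\times\Z_2^k)=5\cdot 2^k-1$ via the product lemma. The only cosmetic difference is that you obtain $k^*(G)=k(G)$ directly from the equivalence just proved, whereas the paper recomputes $k^*(G)$ via ambivalence; both are valid.
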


\begin{proof}
	If $G$ is abelian we know that any invariant metric is also bi-invariant. 
	Let  $G=\Q_8 \times H$, where $H$ is an elementary abelian 2-group 
	(i.e.\@ every non trivial element has order $2$). 
	We want to show that every invariant metric $d$ on $G$ is also bi-invariant. That is, any unitary symmetric partition of $G$ is also conjugate.
	This happens if and only if the partition given by the conjugacy classes of $G$ is finer than the partition associated with the metric $d$, that is $C(G) \preceq P(G,d)$. 
	The conjugacy classes of $G$ are of two forms, namely 
	$$C(G) = \big\{ \{x\} : x \in Z(G) \big\} \cup \big \{ \{x,x^{-1}\} : x \notin Z(G) \big \},$$
	where $Z(G)$ denotes the center of $G$. In this way, we have that 
	$$C(G)\preceq \mathcal{P}_{Lee}(G) \preceq P(G,d)$$ 
	where $\mathcal{P}_{Lee}(G)$ is the finest unitary symmetric partition of $G$, i.e.\@ $\mathcal{P}_{Lee}(G)=\big\{ \{x,x^{-1} \} : x\in G \big\}$.
	
	\sk 
	We now prove the converse. Suppose that every invariant metric on $G$ is bi-invariant.
	For every $x \in G$ consider the metric given by the unitary symmetric partition 
	$$\Prep_x = \{\{e\},\{x,x^{-1}\},G \smallsetminus \{e,x,x^{-1}\}\}.$$ 
	Since $\Prep_x$ is conjugate, by hypothesis, we have that $C_x \subset \{x,x^{-1}\}$. 
		
	Notice that every subgroup $N$ of $G$ is normal, i.e.\@ $gNg^{-1}=N$ for every $g\in G$.  
	This holds since if $x\in N$ then $C_x \subset \{x,x^{-1}\} \subset N$ and $C_x \subset N$ for every $x\in N$ clearly implies the normality of $N$.
	Groups in which all the subgroups are normal are characterized (see \cite{Baer}), that is $G$ is either abelian or 
	$$G = \Q_8 \times H \times K$$ 
	where $H$ is an elementary abelian $2$-group and $K$ is an abelian group in which every element has odd order. To finish the proof we show that $K$ is the trivial group. 
	Take an element $x=(q,h,k) \in G$ with $ord(q)=4$. 
	On the one hand we have $C_{(q,h,k)}= \{ (q,h,k), (-q,h,k) \}$. 
	On the other hand, 
	$$C_{(q,h,k)} \subset \{(q,h,k), (q,h,k)^{-1} \} = \{ (q,h,k), (-q,h,k^{-1})\}.$$
	Hence $k=k^{-1}$, which implies that $k=e$ since $k$ has odd order and hence $K=\{e\}$. 
	
	\goodbreak  
	Finally, suppose that $G$ is finite. If $G$ is non-abelian then $G=\Q_8 \times \Z_2^k$ for some $k\in \N$
	and $G$ is ambivalent since $\Q_8$ and $\Z_2^{k}$ are ambivalent by Lemma \ref{cc2prod}. 
	Thus, by \eqref{kGcG} we have 
	$$k^*(G)= c(G)-1= c(\Q_8)c(\Z_2^k)-1= 5\cdot 2^k-1,$$
	where we used \eqref{kGQn} 
	with $n=2$ (one can also use Lemma \ref{kGxZ2r}).
	On the other hand, by Proposition~\ref{kGxZ2r} we obtain 
	$k(G)= 2^k(k(\Q_8)+1)-1 = 5\cdot 2^k-1$
	where we used Proposition \ref{prop Ms} ($b$) with $n=2$.
	Thus, $k^*(G)= k(G)$ and hence $M^*(G)= M(G)=B_{5\cdot 2^k-1}$, as desired. 
\end{proof}

To measure the commutativity of elements in non-abelian groups, Erdös and Turán defined in 1968 (\cite{ET}) the \textit{commutativity degree} of a group $G$ as 
$$d(G)= \frac{\#\{(x,y) \in G^2 : xy=yx \}}{|G|^2},$$
which is the probability that two elements of $G$ commute, and they proved that 
\begin{equation} \label{dG} 
d(G)= \frac{c(G)}{|G|}.
\end{equation}
Later, in 1973, Gustafson \cite{Gu} showed that if $G$ is non-abelian then $c(G) \le \tfrac 58 |G|$, hence $d(G) \le \frac 58$.

Note that for the groups $G=\Q_8 \times \Z_2^k$ it holds $c(G)=\frac 58 |G|$. 
In this way, for the groups in the previous theorem we have  $b(G)=1$ and also that $d(G)=1$ if $G$ is abelian or $d(G)=\frac 58$ if $G$ is non-abelian. In other words, if $d(G)$ is less that $\frac 58$ then $G$ has some invariant metric which is not bi-invariant, that is 
$$ d(G) < \tfrac 58 \quad \Rightarrow \quad b(G)<1.$$ 

However, for example, if $d(G)=\frac 58$ we cannot assure that every invariant metric is bi-invariant. This is the case for the groups $\D_4 \times \Z_2^k$ (see Tables \ref{tablita}--\ref{tablita5} for more examples). In fact, $c(\D_4 \times \Z_2^k) = c(\D_4)c(\Z_2^k) = 5\cdot 2^k$ and $|\D_4 \times \Z_2^k|=8\cdot 2^k$, hence $d(\D_4 \times \Z_2^k) = \frac 58$.
However, one checks that $b(\D_4 \times \Z_2^k)=\frac{5 \cdot 2^k-1}{7 \cdot 2^k-1} \sim \frac 57 < 1$.
Finally, we notice that all groups $G$ having $d(G)=\frac 58$ must be isoclinic to $\Q_8$ (see \cite{Le}).

%\subsubsection*{Ordered partitions}
%We now consider metrics associated to ordered partitions, although only in this subsection.
%
%If $G$ is a finite group of cardinality $n$ then ordered $\#(\widetilde{\mathcal{M}}(G)) = a_{k(G)}$, were $a(n)$ is the $n$-th ordered Bell number, or Fubini number given by
%$$	a(n)= \sum_{k=0}^n k! \{\begin{smallmatrix} n \\ k \end{smallmatrix}\}.$$
%	
%
%
%\begin{exam}
%	{\blue In $\Z_6$ there is an ordered partition whose associated metric is integral with gaps.
%	Consider the ordered partition $P_0=\{ 0\},P_1=\{ 1,5\},P_2=\{ 3\},P_3=\{ 2,4\}$ then this is not an interval metric cause if $w(P_i)=i$ then $w(2)=3>w(1)+w(1) = 1 +1 =2$.}
%\end{exam}
%
%
%{\blue Decir que no las estudiaremos pues todas las ordered metrics con la misma particion tienen igual grupo de simetria.}
%
%\begin{rem}
%For any $k\in \Z_{\ge 0}$, every metric in $\Q_8 \times \Z_2^k$ is interval.
%In fact, every metric in $\Q_8$ is interval by Proposition \ref{2-weights}. {\blue FALTA...}
%To see that every metric in $\Q_8 \times \Z_2$ is interval notice that any unitary symetric partition is of the form $\mathcal{P}(\Q_8 \times \Z_2) = \mathcal{P}(\Q_8) \times \mathcal{P}(\Z_2)$ since... 
%The associated weight is given by $w(q,h) = 2 w_{\Q_8}(q) + w_{\Z_2}(h)$ (see \cite{Ba95}), which is clearly interval since $w_{\Q_8}$ and $w_{\Z_2}$ are interval. The general case follows by induction.
%\end{rem}

\section{The number of invariant metrics on groups of order up to 32} \label{S6}
In this final section we compute the number of non-equivalent invariant and bi-invariant metrics for all the groups of order less than or equal to 32.

\begin{thm} \label{allmetrics}
Let $G$ be a finite group of order $n\le 32$.
The number of non-equivalent invariant and bi-invariant metrics of $G$ are given in Tables \ref{tablita}--\ref{tablita5}.   
\end{thm}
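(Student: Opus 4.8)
The plan is to reduce the whole statement, via Propositions~\ref{CardMG} and \ref{No bi-inv}, to a finite bookkeeping task. For a group $G$ of order $n$ one has $M(G)=B_{k(G)}$ with $k(G)=\tfrac12(n+k_2(G))-1$ and $M^*(G)=B_{k^*(G)}$ with $k^*(G)=\tfrac12(c(G)+c_2(G))-1$, so it suffices to determine, for each of the $144$ isomorphism types $G$ with $n\le 32$, the three integers $k_2(G)$ (the number of solutions of $x^2=e$), $c(G)$ (the number of conjugacy classes) and $c_2(G)$ (the number of real conjugacy classes). Once these are in hand, the entries of Tables~\ref{tablita}--\ref{tablita5} are the Bell numbers $B_{k(G)}$ and $B_{k^*(G)}$, read off from the list at the beginning of Section~\ref{S5} (extended a little, $B_9=21.147$, $B_{10}=115.975$, $\ldots$) and left in the symbolic form $B_m$ when $m$ is large.

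A large part of the list is covered by the closed formulas already established, and I would dispatch these first. If $n$ is odd then $k_2(G)=1$, so $M(G)=B_{(n-1)/2}$ for every such $G$, and by Remark~\ref{rem bi odd} $k^*(G)=\tfrac12(c(G)-1)$, so only $c(G)$ is needed (this settles $\Z_7\rtimes\Z_3$ and the two non-abelian groups of order $27$). Abelian groups are handled by Proposition~\ref{kGab}; the dihedral groups $\D_n$, dicyclic groups $\Q_{4n}$ and quasidihedral groups $Q\D_n^\pm$ in range by Propositions~\ref{prop Ms} and \ref{prop M*s}; the symmetric and alternating groups $\Sym_3,\Sym_4,\mathbb{A}_4$ by Proposition~\ref{Sn An} and Example~\ref{ambi3}; $\mathrm{SL}_2(\ff_3)$ (of order $24$) by Example~\ref{SL2q}; and $\Q_8$ together with all direct products $G\times\Z_2^k$ of already-treated groups by Lemmas~\ref{lemHxK}, \ref{cc2prod} and \ref{kGxZ2r}, which compute $k_2$, $c$, $c_2$ multiplicatively on direct products.

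For the remaining groups --- the genuinely sporadic semidirect products and the other small groups not of these shapes, most of them among the $14$ groups of order $16$, the $15$ of order $24$, and the $51$ of order $32$ --- I would compute $k_2(G)$, $c(G)$, $c_2(G)$ directly from a presentation: $k_2(G)=1+\#\{\text{involutions}\}$ by solving $x^2=e$; $c(G)$ from the class equation; and a class $C_g$ is real iff some $h$ conjugates $g$ to $g^{-1}$. In practice these three invariants for all $144$ groups are accessible through the small-groups library (e.g.\ GAP's \texttt{SmallGroup}), so the task becomes tabulating $k(G)$ and $k^*(G)$, checking agreement with the closed formulas on the overlap, and reading off $M(G)$ and $M^*(G)$; collecting the rows yields Tables~\ref{tablita}--\ref{tablita5}, which also list, where relevant, the standard names and the bi-invariance degree $b(G)=k^*(G)/k(G)$.

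The main difficulty is volume rather than depth: $k_2(G)$, $c(G)$ and $c_2(G)$ must be correct for every one of the $144$ groups, and for the sporadic order-$32$ groups there is no shortcut beyond a careful case analysis of presentations (or a computer-algebra verification). A secondary point is that several groups in the range belong to more than one of the families used above --- for instance $\Q_8=\mathrm{Dic}_2$, $\D_4\times\Z_2$, or $\Z_2^k$ viewed both as abelian and via $k^*$ --- so I would check that the various formulas agree on these overlaps, which they must by Propositions~\ref{CardMG} and \ref{No bi-inv}; and the degenerate small cases $n\le 7$ should be matched against Section~\ref{S4} and Example~\ref{coro k(G)}.
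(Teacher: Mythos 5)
Your proposal is correct and follows essentially the same route as the paper: reduce to $M(G)=B_{k(G)}$, $M^*(G)=B_{k^*(G)}$ via Propositions~\ref{CardMG} and \ref{No bi-inv}, dispatch the bulk of the $144$ groups with the closed formulas (Propositions~\ref{kGab}, \ref{prop Ms}, \ref{prop M*s}, \ref{Sn An}, Examples~\ref{ambi3}, \ref{SL2q}, and the product Lemmas~\ref{lemHxK}, \ref{cc2prod}, \ref{kGxZ2r}), and handle the remaining sporadic semidirect products and extensions by computing $k_2(G)$, $c(G)$, $c_2(G)$ from presentations, character tables, or the small-groups data. This matches the paper's case-by-case verification, including its use of \eqref{k semid} for the order-$16$ semidirect products and of character tables/GroupNames for the larger orders.
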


\begin{proof}
By Propositions \ref{CardMG} and \ref{No bi-inv}, the number of invariant and bi-invariant metrics of $G$ are given respectively by $M(G)=B_{k(G)}$ and $M^*(G)=B_{k^*(G)}$ where $B_m$ is the $m$-th Bell number and $k(G)$ and $k^*(G)$ are given in \eqref{kG} and \eqref{k*G}.

We know some special cases in general. For instance, we know that $M(G)=B_{(n-1)/2}$ if $n$ is odd (for any $G$) and that $M(G)=M^*(G)$ if $G$ is abelian. For $G$ abelian of even order $n=2m$ it is enough to use Corollary \ref{kGab} asserting that $M(G)=B_{m+2^{s-1}-1}$ where $s$ is the number of factors of the form $\Z_{2^t}$ in the prime decomposition of $G$.
For some families of non-abelian groups, we also know these numbers in general. Dihedral, dicyclic (including generalized quaternions) and quasidihedral %(semidihedral and modular) 
groups are covered by Propositions~\ref{prop Ms} and \ref{prop M*s} while symmetric, alternating and special linear groups are treated in Proposition \ref{Sn An} and Examples \ref{ambi3} and \ref{SL2q}.

We will do a case by case study, proceeding by increasing order of the groups.
The invariant and bi-invariant metrics for groups of order up to 7 were studied in Section \ref{S4} by inspection. 
With the mentioned results for abelian groups and dihedral or symmetric groups we check that the number of metrics are indeed correct. 
So we analyze the cases $n\ge 8$. We will do it in detail for $8 \le n\le 16$, i.e.\@ the groups in Table \ref{tablita}. For the cases $17\le n\le 32$ we will just give some comments, since it is clear how to proceed in each case.

\noindent \textit{Order 8.} There are 5 groups of order 8, the abelian ones $\Z_8$, $\Z_4 \times \Z_2$ and $\Z_2^3$ 
and the non-abelian ones $\D_4$ and $\Q_8$. It is clear that $M(\Z_8)=B_5$, $M(\Z_4 \times \Z_2)=B_6$ and 
$M(\Z_2^3)=B_7$. For the dihedral group we have $M(\D_4)=B_6$ and $M^*(\D_4) = B_4$ and for the quaternion group 
$\Q_8=\{\pm 1, \pm i, \pm j, \pm k\}$ 
we have $M(\Q_8)=M^*(\Q_8)=B_4$.

\noindent \textit{Order 9.} There are 2 groups of order 9, $\Z_9$ and $\Z_3 \times \Z_3$, hence $M(G)=B_4$ in these two cases.  

\noindent \textit{Order 10.} There are two groups of order 10, the cyclic group $\Z_{10}$ and the dihedral group $\D_{5}$. Since $\Z_{10} = \Z_5 \times \Z_2$ we have that $M(\Z_{10})=B_6$. Also, $M(\D_5)=B_7$ and $M^*(\D_5)=B_3$.

\noindent \textit{Order 11.} 
There is only one group of order 11, $\Z_{11}$, hence $M(\Z_{11})=B_5$.

\noindent \textit{Order 12.} There are five groups of order $12$, two abelian ones $\Z_{12}=\Z_4 \times \Z_3$ and $\Z_6 \times \Z_2 = \Z_3 \times \Z_2^2$, and three non-abelian ones $\D_6$, $\Q_{12}$ and $\mathbb{A}_4$. For the abelian groups we have 
$M(\Z_{12})=B_6$ and $M(\Z_6 \times \Z_2)=B_7$. 
For the dihedral group we have $M(\D_6)= B_9$ and $M^*(\D_6)=B_5$ while for the dyciclic group $\Q_{12}$ we have that $M(\Q_{12})=B_6$ and $M^*(\Q_{12})=B_4$.
For the alternating group $\mathbb{A}_4$ we have $M(\mathbb{A}_4)= B_7$ by Proposition \ref{Sn An}. 
To compute the number of bi-invariant metrics on $\mathbb{A}_4$, notice that $\mathbb{A}_4$ has 4 conjugacy classes given by $\{ id \}$, $\{ (12)(34), (13)(24), (14)(23)\}$, $\{ (123), (243), (142), (134) \}$ and $\{(132), (124), (143), (234)\}$, the first two of which are real classes. Hence, $k^*(\mathbb{A}_4) = \frac{4+2}2 - 1= 2 $ and thus 
$M^*(\mathbb{A}_4)=B_2$. 

\noindent \textit{Order 13.} There is only one group of order 13, $\Z_{13}$, hence $M(\Z_{13})=B_6$.

\noindent \textit{Order 14.}  There are two groups of order 14, the cyclic group $\Z_{14}$ and the dihedral group $\D_{7}$. Since $\Z_{14}=\Z_7 \times \Z_2$ we have that $M(\Z_{14})=B_8$. Also, $M(\D_7)=B_{10}$ and $M^*(\D_7)=B_4$.

\noindent \textit{Order 15.} There is only one group of order 15, $\Z_{15}$, hence $M(\Z_{15})=B_7$.

\noindent \textit{Order 16.} There are 14 groups of order 16; the 5 abelian groups $\Z_{16}$, $\Z_8 \times \Z_2$, $\Z_4^2$, $\Z_4 \times \Z_2^2$ and $\Z_2^4$, and the 9 non-abelian groups   
$\D_{8}$, $\Q_{16}$, $Q\D_{4}^+$, $Q\D_{4}^-$, $\D_4 \times \Z_2$, $\Q_8 \times \Z_2$, $\Z_4 \rtimes \Z_4$, $\Z_2^2 \rtimes \Z_4$, and $(\Z_4 \times \Z_2) \rtimes \Z_2$.
We have $M(\Z_{16})=B_8$, $M(\Z_8 \times \Z_2)=M(\Z_4^2)=B_{9}$, 
$M(\Z_4 \times \Z_2^2)=B_{11}$ and $M(\Z_2^4)=B_{15}$.
For the dihedral groups we know that $M(\D_8)=B_{12}$ and $M^*(\D_8)=B_6$, for the quaternionic group we have $M(\Q_{16})=B_8$ and $M^*(\Q_{16})=B_6$ and for the semidihedral groups we have $M(Q\D_4^-)=B_{20}$, $M(Q\D_{4}^+)=B_9$ and $M^*(Q\D_4^-)=B_5$, $M^*(Q\D_4^+)=B_6$. 

For the direct products $G \times \Z_2$ with $G$ non-abelian of order 8, using Lemmas \ref{lemHxK} and \ref{kGxZ2r} we get that $M(\D_4 \times \Z_2) = B_{13}$ and $M^*(\D_4 \times \Z_2)=B_9$ and that $M(\Q_8 \times \Z_2)=B_9$ and $M^*(\Q_8 \times \Z_2)=B_9$.

It remains to study the three semidirect products $\Z_4 \rtimes_\varphi \Z_4$, $\Z_2^2 \rtimes_\varphi \Z_4$ and $(\Z_4 \times \Z_2) \rtimes_\varphi \Z_2$. For the first semidirect product, $\varphi:\Z_4 \rightarrow \mathrm{Aut}(\Z_4)$ is determined by $\varphi(1)$ and $\varphi_1(a) = (-1)^a$.
By \eqref{k semid} we have that 
\begin{equation} \label{sd1}
	k_2(\Z_4 \rtimes_\varphi \Z_4)=k_2(\Z_4) + \sum_{h \in \Z_4^*, 2h=0} \#\{g\in \Z_4: \varphi_h(g)=-g\}= 2+2=4,
\end{equation} 
where $\#\{g\in \Z_4: \varphi_2(g)=-g\}=2$ since $\varphi_2=id$.

For the second semidirect product, $\varphi : \Z_4 \rightarrow \mathrm{Aut}(\Z_2^2) \simeq \mathrm{GL}_2(\Z_2)$ is determined by $\varphi(1)$ which must be an element of order 2. Since all elements of order 2 in $\mathrm{GL}_2(\Z_2)$ are conjugate, all posible automorphism give rise to the same semidirect product. Thus, we can take $\varphi_1$ as multiplication by the matrix $J=(\begin{smallmatrix} 0&1 \\ 1&0 \end{smallmatrix}$). By \eqref{k semid} we have that $\varphi_2=id$ and 
\begin{equation} \label{sd2}
k_2(\Z_2^2 \rtimes_\varphi \Z_4) = k_2(\Z_2^2) + \#\{(x,y)\in \Z_2^2: 2(x,y)=(0,0)\}= 4+4=8.
\end{equation}

For the third semidirect product, we use that $(\Z_4 \times \Z_2) \rtimes_\varphi \Z_2 \simeq \Q_8 \rtimes_{\varphi'} \Z_2$, where 
the morphism $\varphi' : \Z_2 \rightarrow \mathrm{Aut}(\Q_8)$ is given by conjugation by $i$, that is $\varphi_1'(x) = ixi^{-1}$ for any quaternion $x$.
By \eqref{k semidZ2} we have that 
\begin{equation} \label{sd3}
k_2(\Q_8 \rtimes_{\varphi'} \Z_2) = k_2(\Q_8) + \#\{ x\in \Q_8 : xixi^{-1}=1\}= 2+6=8,
\end{equation}
since $xix(-i)=1$ holds for every quaternion different from $\pm i$.

Thus, we have that 
$k(\Z_4 \rtimes \Z_4)=\frac{16+4}2-1$ and $k(\Z_2^2 \rtimes_\varphi \Z_4) = k((\Z_4 \times \Z_2) \rtimes_\varphi \Z_2) = \frac{16+8}2-1$ by \eqref{sd1}--\eqref{sd3}, and hence $M(\Z_4 \rtimes \Z_4)=B_9$ and $M(\Z_2^2 \rtimes_\varphi \Z_4) = M((\Z_4 \times \Z_2) \rtimes_\varphi \Z_2)= B_{11}$.

For the group $$\Z_4 \rtimes_\varphi \Z_4=\langle a,x : a^4=x^4=1, xax^{-1}=a^{-1}\rangle$$
the conjugacy classes are 
$$\{e\}, \{a,a^{3}\}, \{x,a^{2}x\}, \{a^2\}, \{x^2\}, \{ax,a^{3}x\}, \{ax^2,a^{3}x^2\}, \{x^3,a^2x^3\}, \{a^2x^2\}, \{ax^3,a^{3}x^3\}$$ 
while the real conjugacy classes are $\{e\}, \{a,a^{3}\}, \{a^2\}, \{x^2\}, \{ax^2,a^{3}x^2\}, \{a^2x^2\}$.
Thus, we have $k(\Z_4 \rtimes_\varphi \Z_4)=\frac{10+6}2-1$ and hence $M^*(\Z_4 \rtimes_\varphi \Z_4)=B_7$. 

The groups 
\begin{gather*}
\Z_2^2 \rtimes_\varphi \Z_4 = \langle a,b,c : a^2=b^2=c^4=1, cac^{-1}=ab=ba, bc=cb \rangle, \\
(\Z_4 \times \Z_2) \rtimes_\varphi \Z_2 = \langle  a,b,c : a^4=c^2=1, b^2=a^2, ab=ba, ac=ca, cbc=a^2b  \rangle,
\end{gather*}
have character tables given by
$$\begin{array}{c|rrrrrrrrrr}
\Z_2^2 \rtimes \Z_4 & &&&&&&&&& \\ 
\hline
\rho_1 & 1 & 1 & 1 & 1 & 1 & 1 & 1 & 1 & 1 & 1 \\
\rho_2 & 1 & -1 & -1 & 1 & 1 & 1 & -1 & -1 & 1 & 1 \\
\rho_3 & 1 & -1 & 1 & 1 & 1 & -1 & -1 & 1 & 1 & -1 \\
\rho_4 & 1 & 1 & -1 & 1 & 1 & -1 & 1 & -1 & 1 & -1 \\
\rho_5 & 1 & -i & -1 & 1 & -1 & i & i & 1 & -1 & -i \\
\rho_6 & 1 & i & -1 & 1 & -1 & -i & -i & 1 & -1 & i \\
\rho_7 & 1 & -i & 1 & 1 & -1 & -i & i & -1 & -1 & i \\
\rho_8 & 1 & i & 1 & 1 & -1 & i & -i & -1 & -1 & -i \\
\rho_9 & 2 & 0 & 0 & -2 & 2 & 0 & 0 & 0 & -2 & 0 \\
\rho_{10} & 2 & 0 & 0 & -2 & -2 & 0 & 0 & 0 & 2 & 0 \\ 
\end{array}$$  
$$\begin{array}{c|rrrrrrrrrr}  
(\Z_2\times \Z_4)\rtimes\Z_2 &&&&&&&&&& \\ \hline
\rho_1 & 1 & 1 & 1 & 1 & 1 & 1 & 1 & 1 & 1 & 1 \\
\rho_2 & 1 & -1 & -1 & -1 & 1 & 1 & 1 & 1 & -1 & -1 \\
\rho_3 & 1 & -1 & -1 & 1 & 1 & 1 & -1 & -1 & 1 & 1 \\
\rho_4 & 1 & -1 & 1 & -1 & 1 & -1 & 1 & -1 & -1 & 1 \\
\rho_5 & 1 & -1 & 1 & 1 & 1 & -1 & -1 & 1 & 1 & -1 \\
\rho_6 & 1 & 1 & -1 & -1 & 1 & -1 & -1 & 1 & -1 & 1 \\
\rho_7 & 1 & 1 & -1 & 1 & 1 & -1 & 1 & -1 & 1 & -1 \\
\rho_8 & 1 & 1 & 1 & -1 & 1 & 1 & -1 & -1 & -1 & -1 \\
\rho_9 & 2 & 0 & 0 & -2 i & -2 & 0 & 0 & 0 & 2 i & 0 \\
\rho_{10} & 2 & 0 & 0 & 2 i & -2 & 0 & 0 & 0 & -2 i & 0 \\ 
\end{array}$$ 

In general, the number of characters of a group is the same as the number of conjugacy classes while the number of real characters coincides with the number of real conjugacy classes. 
Thus, we have $k(\Z_2^2 \rtimes \Z_4)=\frac{10+6}2-1=7$ and $k((\Z_4\times \Z_2)\rtimes\Z_2)=\frac{10+8}2-1=8$ and hence 
$M^*(\Z_2^2 \rtimes \Z_4)=B_7$ and $M^*((\Z_4\times \Z_2)\rtimes\Z_2)=B_8$.

For the remaining groups of order $17\le n \le 32$ (see Tables \ref{tablita2}--\ref{tablita5}) the results follow in the same way as before from the results of the previous section. 
For the computation of the invariant metrics for the groups of even order, 
the groups that do not fall into the hypothesis of our previous results are the group
$\Z_5 \rtimes \Z_4 = \langle a,b : a^5=b^4=e, bab^{-1}=a^3 \rangle$  ($\#54$)
of order 20, the group 
$\Z_3 \rtimes \Z_8 = \langle a,b : a^3=b^8=e, bab^{-1}=a^{-1} \rangle$  ($\#74$) 
of order 24 and those groups of order 32 numbered \#116--\#121 and \#135--\#142 in Table \ref{tablita5}.
Using the presentations it is easy (though tedious) to check that $k_2(\Z_5 \rtimes \Z_4)=6$ and $k_2(\Z_3 \rtimes \Z_8 )=2$ from which one obtain the number of invariant metrics.   
For the rest of the groups one can also use their presentations with generators and relations (or some mathematical software to count the number of elements of order 2 of these groups). 
To compute the bi-invariant metrics of the non-abelian groups of order $17 \le n\le 32$ which are not dihedral, semidihedral or dicyclic groups, one can use their character tables as before. For instance, one can use the webpage GroupNames (\cite{GN}). However, many of the direct products can be computed by reducing them to easier cases by using Lemma~\ref{kGxZ2r}. 
\end{proof}

\renewcommand{\arraystretch}{1.15}
\begin{table}[H] 
\caption{Invariant metrics on groups of order $1\le n \le 16$}  \label{tablita}
$$\begin{tabular}{|c|c|c|c|c|c|c|c|c|c|} 
	\hline 
\# & order & id & $G$									 & $M(G)$ 			& $M^*(G)$ & $b(G)$ & $d(G)$ & comment \\	\hline
1 & $1$ (1) & 1  & $\{e\}$ 											 & $B_1$ 			& 				 & 1 & & abelian \\ \hline
2 & $2$ (1) & 1  & $\Z_2$ 												 & $B_1$ 			& 				 & 1 & & abelian \\ \hline
3 & $3$ (1) & 1 & $\Z_3$ 												 & $B_1$		& 			 & 1 & & abelian \\ \hline
4 & $4$ (2) & 1 & $\Z_4$ 												 & $B_2$ 			& 				 & 1 & & abelian \\ 
5 &     & 2 & $\Z_2^2					$ 						 & $B_3$ 			& 				 & 1 & & abelian \\ \hline
6 & $5$ (1)& 1 & $\Z_5$ 												 & $B_2$ 			& 			   & 1 & & abelian \\ \hline
7 & $6$ (2)& 2  & $\Z_6$										 & $B_3$ 			& 				 & 1 & & abelian \\    
8 &     & 1  & $\D_3$   			 & $B_4$ 		& $B_2$ & $\frac 12$ & $\frac 12$ & dihedral, $\Sym_3$, $\mathrm{SL}_2(\ff_2)$ \\ \hline
9 & $7$ (1)& 1  & $\Z_7$ 												 & $B_3$ 	& 				 & 1 & & abelian \\ \hline  
10 & $8$ (5)& 1 & $\Z_8$ 				                 & $B_4$ 		& 				 & 1 & & abelian \\  
11 &    & 2 & $\Z_4 \times \Z_2$ 						 & $B_5$ 		& 				 & 1 & & abelian \\ 
12 &    & 5 & $\Z_2^3$											 & $B_7$ 		& 			& 1 & & abelian \\ 
13 &    & 3 & $\D_4$ 												 & $B_6$ 		& $B_4$ 		& $\frac 23$ & $\frac 58$ & dihedral \\
14 & 	& 4 & $\Q_8$ 												 & $B_4$ 		& $B_4$ 		& 1 & $\frac 58$ & quaternionic \\ \hline
15 & $9$ (2)& 1 & $\Z_9$ 											& $B_4$ 		& 			& 1 & & abelian \\  
16 & 	& 2	& $\Z_3^2$ 											 & $B_4$ 		& 				 & 1 & & abelian  \\ \hline  
17 & $10$ (2)& 2 & $\Z_{10}$ 										 & $B_5$ 		& 				 & 1 & & abelian \\ 
18 &	  & 1 & $\D_5$ 												 & $B_7$  	& $B_3$	& $\frac 37$  & $\frac 25$ & dihedral \\ \hline
19 & $11$ (1)& 1 & $\Z_{11}$ 										 & $B_5$ 		& 				 & 1 & & abelian \\ \hline
20 & $12$ (5)& 2 & $\Z_{12}$ 										 & $B_6$ 		&					 & 1 & & abelian \\ 
21 &	& 5	& $\Z_3 \times \Z_2^2$ 					 & $B_7$  	& 				 & 1 & & abelian \\ 
22 &	& 4	& $\D_6$ 												 & $B_9$	& $B_5$ 		 & $\frac 59$  & $\frac 12$  & dihedral \\
23 & 	& 1	& $\Q_{12}$  & $B_6$   	& $B_4$ 		 & $\frac 23$  & $\frac 12$  & dicyclic \\ 
24 & 	& 3 & $\mathbb{A}_{4}$ & $B_7$ &  $B_2$ 	 & $\frac 27$ & $\frac 13$  & alternating \\ \hline
25 & $13$ (1)& 1 & $\Z_{13}$ 										 & $B_6$ 		& 				 & 1 & & abelian \\ \hline
26 & $14$ (2)&	2 & $\Z_{14}$ 										 & $B_7$ 		& 				 & 1 & & abelian \\ 
27 & 	  &	1 & $\D_7$  											 & $B_{10}$& $B_4$ 		 & $\frac 25$ & $\frac{5}{14}$ & dihedral  \\ \hline 
28 & $15$ (1)& 1 & $\Z_{15}$ 										 & $B_7$ 		& 				 & 1 & & abelian \\ \hline
29 & $16$ (14)& 1 & $\Z_{16}$ 										 & $B_8$ 	& 				 & 1 & & abelian \\
30 &	  & 5 & $\Z_{8} \times \Z_2$ 					 & $B_9$ & 				 & 1 & & abelian \\
31 &	  &	2 & $\Z_4^2$ 											 & $B_9$ & 				 & 1 & & abelian \\
32 &	  &	10 & $\Z_{4} \times \Z_2^2$ 				 & $B_{11}$& 				 & 1 & & abelian \\
33 &	  &	14 & $\Z_2^4$ 											 & $B_{15}$ & 	 & 1 & & abelian \\
34 &	  & 7 & $\D_{8}$ 											 & $B_{12}$ & $B_6$ & $\frac 12$ & $\frac{7}{16}$ & dihedral  \\
35 &	  & 9 & $\Q_{16}$ 										 & $B_8$  & $B_6$		 & $\frac 34$& $\frac{7}{16}$ & quaternionic \\ 
36 &	  &	8 & $Q\D_{4}^-$ 		 & $B_{10}$& $B_5$ 		 & $\frac 12$ & $\frac{7}{16}$ & quasidhedral\\ 
37 &	  &	6 & $Q\D_{4}^+$  & $B_9$ & $B_6$ 	 & $\frac 23$ & $\frac 58$ & quasidhedral\\ 
38 &	  &	11 & $\D_4 \times \Z_2$ 						 & $B_{13}$& $B_9$ & $\frac{9}{13}$& $\frac 58$ & product, gen.\@ dihedral \\
39 & 	  &	12 & $\Q_8 \times \Z_2$ 						 & $B_9$ & $B_9$ & 1 & $\frac 58$ & product\\
40 &	  & 4 & $\Z_4 \rtimes \Z_4$   				 & $B_9$ & $B_7$	  & $\frac 79$ & $\frac 58$  &  semidirect product\\ 
41 & 	  & 3 & $\Z_2^2 \rtimes \Z_4$ 				 & $B_{11}$& $B_7$ & $\frac{7}{11}$& $\frac 58$  &  semidirect product \\
42 &	  & 13 & $(\Z_4\times \Z_2)\rtimes \Z_2$ & $B_{11} $ & $B_8$ &$\frac{8}{11}$ & $\frac 58$   & semidirect product %generalized dihedral 
\\ \hline
\end{tabular}$$
\end{table}

\subsubsection*{Comments on Tables \ref{tablita}--\ref{tablita5}} 
In the second and third columns we list the GAP id of the groups. In the second column we also indicate between parenthesis the number of groups of the given order. 
Recall that $b(G)=\frac{k^*(G)}{k(G)}$ and $d(G)=\frac{c(G)}{|G|}$.
For abelian groups $G$ we omit the number of bi-invariant metrics, since $M(G)=M^*(G)$, and $d(G)=1$.
In the last column we indicate the property of group that best describes it. In general there are some of them and we list the ones that can be used to obtain the number of metrics with our results.  
Finally, we mention that to find which groups of order $2n$ are generalized dihedral groups, we perform the semidirect product of non-cyclic abelian groups of order $n$ different from $\Z_2^k$ and compare them with the non-abelian groups of order $2n$ in the tables. We get: 
$\D(\Z_4 \times \Z_2)= \D_4 \times \Z_2$, 
$\D(\Z_3^2) = \Z_3^2 \rtimes \Z_2 $, $\D(\Z_3 \times \Z_2^2) = \D_6 \times \Z_2$, 
$\D(\Z_8 \times \Z_2)= \D_8 \times \Z_2$, $\D(\Z_4^2)= \Z_4 \rtimes \D_4$ and $\D(\Z_4 \times \Z_2^2)= \D_4 \times \Z_2^2$.

\renewcommand{\arraystretch}{1.1125}
\begin{table}[H] 
\caption{Invariant metrics on groups of order $17 \le n \le 24$}  \label{tablita2}
$$\begin{tabular}{|c|c|c|c|c|c|c|c|c|c|} 
	\hline 
\# & order & id & $G$ 						 & $M(G)$ 			& $M^*(G)$ & $b(G)$ & $d(G)$ & comment \\	\hline
43& 17 (1)& 1 & $\Z_{17}$ & $B_8$ &  & $1$ & & abelian \\ \hline
44 & 18 (5)& 2 & $\Z_{18}$ & $B_9$ & & $1$ & & abelian \\
45 &       & 5 &  $\Z_3^2 \times \Z_2 $ & $B_9$ & & $1$ & & abelian \\ 
46 &       & 1 & $ \D_9 $ & $B_{13}$ & $B_5$ & $\frac{5}{13}$& $ \frac 13$ & dihedral \\ 
47 &       & 3 & $ \D_3 \times \Z_3 $ & $B_{10}$ & $B_5$ & $\frac 12$& $ \frac 12$ & product \\
48 &       & 4 & $ \Z_3^2 \rtimes \Z_2 $ & $B_{13}$ & $B_5$ & $\frac{5}{13}$& $ \frac 13$ & generalized dihedral \\ \hline
49 & 19 (1) & 1 & $ \Z_{19}$ & $B_9$ & & $1$ & &  abelian \\ \hline
50 & 20 (5) & 2 &$\Z_{20}$ & $B_{10}$ & & $1$ & & abelian \\
51 & & 5 & $\Z_5 \times \Z_2^2$ & $B_{11}$ & & $1$ & &  abelian \\ 
52 & & 4& $\D_{10}$ & $B_{15}$ & $B_7$ &$\frac{7}{15}$ & $\frac{2}{5}$ &  dihedral \\
53 & & 1 & $\Q_{20}$ & $B_{10}$ & $B_6$ & $\frac{3}{5}$ & $\frac{2}{5}$ &  dicyclic \\ 
54 &  & 3 & $\Z_5 \rtimes \Z_4$ & $B_{12}$ & $B_3$ & $\frac 14$ & $\frac 14$ & semidirect product \\ \hline
55 & 21 (2)& 2&$ \Z_{21}$ & $B_{10}$ &  & $1$ & & abelian \\ 
56 &  &1 & $\Z_7 \rtimes \Z_3$ & $B_{10}$ & $B_2$ & $\frac 15$&  $\frac{5}{21}$ & semidirect product \\ \hline
57 & 22 (2) & 2 & $\Z_{22}$ & $B_{11}$ &  & $1$ & & abelian \\ 
58 &  & 1 & $\D_{11}$ & $B_{16}$ & $B_6$ & $\frac 38$& $\frac{7}{22}$ & dihedral \\ \hline
59 & 23 (1)& 1 & $\Z_{23}$ & $B_{11}$ & & $1$ & & abelian \\ \hline
60 & 24 (15) & 2 & $\Z_{24}$ & $B_{12}$ &  & $1$ & & abelian \\
61 & & 9 & $\Z_4 \times \Z_3 \times \Z_2$ & $B_{13}$ & & $1$ & & abelian \\
62 & & 15 & $\Z_3 \times \Z_2^3$ & $B_{15}$ & & $1$ & &  abelian \\ 
63 & & 6 & $\D_{12}$ & $B_{18}$ & $B_8$ & $\frac 49$ & $\frac{3}{8}$ & dihedral \\
64 & & 4 & $\Q_{24}$ & $B_{12}$ & $B_8$ & $\frac 23$ & $\frac{3}{8}$ & quaternionic \\
65 & & 14 & $\D_6 \times \Z_2$ & $B_{19}$ & $B_{11}$ & $\frac{11}{19}$& $\frac 12$ & product, gen.\@ dihedral  \\
66 & & 10 & $\D_4 \times \Z_3$ & $B_{14}$ & $B_9$ & $\frac{9}{14}$ & $\frac{5}{8}$ &  product  \\
67 & & 5 & $\D_3 \times \Z_4$ & $B_{15}$ & $B_8$ & $\frac{8}{15}$ & $\frac 12$ & product  \\
68 & & 7 & $\Q_{12} \times \Z_2$ & $B_{13}$ & $B_9$ & $\frac{9}{13}$ & $\frac 12$ & product \\
69 & & 11 & $\Q_8 \times \Z_3$ & $B_{12}$ & $B_9$ & $\frac 34$ & $\frac{5}{8}$ & product \\
70 & & 13 & $\mathbb{A}_4 \times \Z_2$ & $B_{15}$ & $B_5$ & $\frac 13$& $\frac 13$ & product \\
71 & & 12 & $\Sym_4$ & $B_{16}$ & $B_4$ & $\frac 34$ & $\frac{5}{24}$ & symmetric \\
72 & & 3 & $\mathrm{SL}_2(\ff_3)$ & $B_{12}$ & $B_4$ & $\frac 13$ & $\frac{7}{24}$ &  special linear \\
73 & & 8 & $(\Z_6 \times \Z_2) \rtimes \Z_2$ & $B_{16}$ & $B_7$ & $\frac{7}{16}$ & $\frac{3}{8}$ & semidirect product \\
74 & & 1 & $\Z_3 \rtimes \Z_8$ & $B_{12}$ & $B_7$ & $\frac{7}{12}$ & $ \frac 12$ & semidirect product \\ \hline
\end{tabular}$$
\end{table}

\renewcommand{\arraystretch}{1.25}
\begin{table}[H] 
	\caption{Invariant metrics on groups of order $25 \le n \le 31$}  \label{tablita3}
	$$\begin{tabular}{|c|c|c|c|c|c|c|c|c|c|} 
	\hline 
\# & order & id & $G$ 				& $M(G)$ 			& $M^*(G)$ & $b(G)$ & $d(G)$ & comment \\	\hline
75 & 25 (2)& 1 & $\Z_{25} $ & $B_{12}$ &  & 1 & & abelian \\ 
76 & 	& 2 & $\Z_5^2$ & $B_{12}$ & & $1$ & & abelian\\ \hline
77 & 26	(2)& 2& $\Z_{26}$ & $B_{13}$ & & $1$ & & abelian \\ 
78 &    & 1 & $\D_{13}$ & $B_{19}$ & $B_7$ & $\frac{7}{19}$ & $\frac{4}{13}$ & dihedral \\ \hline
79 & 	27 (5) & 1 & $\Z_{27}$ & $B_{13}$ & & $1$ & & abelian \\
80 & 	& 2 & $\Z_9 \times \Z_3$ & $B_{13}$ & & $1$ & & abelian \\
81 & 	& 5 & $\Z_3^3$ & $B_{13}$ & & $1$ & & abelian \\ 
82 & 	& 4 & $\Z_9 \rtimes \Z_3$ & $B_{13}$ & $B_5$ & $\frac{5}{13}$ & $\frac{11}{27}$ &  semidirect product \\ 
83 & 	& 3 & $\Z_3^2 \rtimes \Z_3$ & $B_{13}$ & $B_5$ & $\frac{5}{13}$ & $\frac{11}{27}$ &  semidirect product \\ \hline
84 & 28	(4)& 2 & $\Z_{28}$ & $B_{14}$ & & $1$ & & abelian  \\
85 &    & 4 & $\Z_{14} \times \Z_2$ & $B_{15}$ &  & $1$ & & abelian \\ 
86 & 	& 3 & $\D_{14}$ & $B_{21}$ & $B_9$ & $\frac{3}{7}$ & $\frac{5}{14}$ & dihedral \\
87 &    & 1 & $\Q_{28}$ & $B_{14}$ & $B_8$ & $\frac 47$ & $\frac{5}{14}$ & dicyclic \\ \hline
88 & 	29 (1)& 1 & $\Z_{29}$ & $B_{14}$ & & $1$ & & abelian \\ \hline
89 & 30 (4)	& 4 & $\Z_{30}$ & $B_{15}$ & & $1$ & & abelian \\ 
90 & 	& 3& $\D_{15}$ & $B_{22}$ & $B_8$ & $\frac{4}{11}$ & $\frac{3}{10}$ & dihedral \\
91 & 	& 2 & $\D_5 \times \Z_3$ & $B_{17}$ & $B_7$ & $\frac{7}{17}$ & $\frac{2}{5}$ & product \\
92 &    & 1 & $\D_3 \times \Z_5$ & $B_{16}$ & $B_8$ & $\frac 12$ & $\frac 12$ & product \\ \hline
93 & 31 (1)& 1 & $\Z_{31}$ & $B_{15}$ & & $1$ & & abelian \\ \hline
	\end{tabular}$$
\end{table}

\renewcommand{\arraystretch}{1.25}
\begin{table}[h!] 
	\caption{Invariant metrics on groups of order $32$, I: abelian, dihedral, quasidhedral and dicyclic}
	\label{tablita4}
	$$\begin{tabular}{|c|c|c|c|c|c|c|c|} 
	\hline 
	\# & id & $G$ 		 & $M(G)$ 			& $M^*(G)$ & $b(G)$ & $d(G)$ & comment \\	\hline
	94 & 1 & $\Z_{32}$ & $B_{16}$ &  & 1 & & abelian \\ 
	95 & 16& $\Z_{16} \times \Z_2$ & $B_{17}$ & & 1 & & abelian \\
	96 & 3 & $\Z_8 \times \Z_4$ & $B_{17}$ & & $1$ & & abelian \\ 
	97 & 36& $\Z_8 \times \Z_2^2$ & $B_{19}$ & & $1$ & & abelian \\ 
	98 & 21& $\Z_4^2 \times \Z_2$ & $B_{19}$ & & $1$ & & abelian \\
	99 & 45& $\Z_4 \times \Z_2^3$ & $B_{23}$ & & $1$ & & abelian\\
	100 & 51& $\Z_2^5$ & $B_{31}$ & & $1$ & & abelian \\ \hline 
	101 & 18 & $\D_{16}$ & $B_{24}$ & $B_{10}$ & $\frac{5}{12}$ & $\frac{11}{32}$ &  dihedral \\
	102 & 20 & $\Q_{32}$ & $B_{16}$ & $B_{10}$ & $\frac{5}{8}$  & $\frac{11}{32}$ &  quaternionic \\ 
	103 & 19 & $S\D_5^-$ & $B_{20}$ & $B_8$ & $\frac 25$  & $\frac{11}{32}$ &  quasidhedral\\ 
	104 & 17 & $S\D_5^+$ & $B_{17}$ & $B_{11}$ & $\frac{11}{17}$ & $\frac 58$ & quasidhedral \\ \hline
	\end{tabular}$$
\end{table}

\renewcommand{\arraystretch}{1.15}
\begin{table}[H] 
	\caption{Invariant metrics on groups of order $32$, II: products and extensions}
	\label{tablita5}
	$$\begin{tabular}{|c|c|c|c|c|c|c|c|} 
	\hline 
	\# & id & $G$ 						 & $M(G)$ 		& $M^*(G)$  & $b(G)$ & $d(G)$ & comment \\	\hline
	105& 39 & $\D_{8} \times \Z_2$ & $B_{25}$ & $B_{13}$ & $\frac{13}{25}$ & $\frac{7}{16}$ &  direct product, gen.\@ dihedral \\ 
	106& 25 & $\D_{4} \times \Z_4$ & $B_{21}$ & $B_{14}$ & $\frac 23$ & $\frac{5}{8}$ & direct product \\
	107& 46 & $\D_{4} \times \Z_2^2$ & $B_{27}$ & $B_{19}$ & $\frac{19}{27}$ & $\frac{5}{8}$ & direct product, gen.\@ dihedral \\ 
	108& 41 & $\Q_{16} \times \Z_2$ & $B_{17}$ & $B_{13}$ & $\frac{13}{17}$ & $\frac{7}{16}$ & direct product \\ 
	109& 26 & $\Q_{8} \times \Z_4$ & $B_{17}$ & $B_{14}$ & $\frac{14}{17}$ & $\frac{5}{8}$ & direct product \\ 
	110& 47 & $\Q_{8} \times \Z_2^2$ & $B_{19}$ & $B_{19}$ & 1 & $\frac{5}{8}$ & direct product \\
	111& 40 & $S\D_5^- \times \Z_2$ & $B_{21}$ & $B_{11}$ & $\frac{11}{21}$ & $\frac{7}{16}$ & direct product \\ 
	112& 37 & $S\D_5^+ \times \Z_2$ & $B_{19}$ & $B_{13}$ & $\frac{13}{19}$ & $\frac 58$ & direct product \\ 
113& 23 & $(\Z_4 \rtimes \Z_4) \times \Z_2$ & $B_{19}$ & $B_{15}$ & $\frac{15}{19}$ & $\frac 58$ &  direct product \\ 
114& 22 & $(\Z_2^2 \rtimes \Z_4) \times \Z_2$ & $B_{23}$ & $B_{15}$ & $\frac{15}{23}$ & $\frac 58$ &  direct product \\ 
115& 48 & $(\Z_4 \circ \Z_4) \times \Z_2$ & $B_{23}$ & $B_{17}$ & $\frac{17}{23}$ & $\frac 58$ &  direct product \\ \hline
116& 38 & $\Z_8 \circ \D_4$ & $B_{19}$ & $B_{13}$ & $\frac{13}{19}$ & $\frac 58$ &  central product \\ 
117& 42 & $\Z_4 \circ \D_8$ & $B_{21}$ & $B_{11}$ & $\frac{11}{21}$ & $\frac{7}{16}$ &  central product \\ 
118& 49 & $\D_4 \circ \D_4$ & $B_{25}$ & $B_{16}$ & $\frac{16}{25}$ & $\frac{17}{32}$ &  central product \\ 
119& 50 & $\D_4 \circ \Q_8$ & $B_{21}$ & $B_{13}$ & $\frac{13}{21}$ & $\frac{17}{32}$ &  central product \\ 
120& 11 & $\Z_4 \wr \Z_2$ & $B_{19}$ & $B_{9}$ & $\frac{9}{19}$ & $\frac{7}{16}$  &  wreath product \\ 
121& 27 & $\Z_2^2 \wr \Z_2$ & $B_{25}$ &$B_{13}$ & $\frac{13}{25}$ & $\frac{7}{16}$  &  wreath product \\ \hline
	122& 24 &  $\Z_4^2 \rtimes_1 \Z_2$ & $B_{19}$ & $B_{13}$ & $\frac{13}{19}$ & $\frac{5}{8}$ & semidirect product \\ 
	123& 33 & $\Z_4^2 \rtimes_2 \Z_2$ & $B_{33}$ & $B_{10}$ & $\frac{10}{33}$ & $\frac {7}{16}$ &  semidirect product \\ 
	124& 43 & $\Z_8 \rtimes \Z_2^2$ & $B_{23}$ & $B_{10}$ & $\frac{10}{23}$ & $\frac{11}{32}$ &  semidirect product \\ 
	125& 4 &  $\Z_8 \rtimes \Z_4$ & $B_{17}$ & $B_{11}$ & $\frac{11}{17}$ & $\frac 58$ &  semidirect product \\ 
	126& 6 & $\Z_2^2 \rtimes \Z_4$ & $B_{21}$ & $B_{8}$ & $\frac{8}{21}$ & $\frac{11}{32}$ &  semidirect product \\ 
	127& 12 & $\Z_4 \rtimes \Z_8$ & $B_{17}$ & $B_{12}$ & $\frac{12}{17}$ & $\frac 58$ &  semidirect product \\ 
	128& 5 & $\Z_2^2 \rtimes \Z_8$ & $B_{19}$ & $B_{12}$ & $\frac{12}{19}$ & $\frac 58$ &  semidirect product \\ 
	129& 28 & $\Z_4 \rtimes_1 \D_4$ & $B_{23}$ & $B_{12}$ & $\frac{12}{23}$& $\frac{7}{16}$ &  semidirect product \\ 
	130& 34 & $\Z_4 \rtimes_2 \D_4$ & $B_{25}$ & $B_{13}$ & $\frac{13}{25}$& $\frac{7}{16}$ &  semidirect prod.\@, gen.\@ dihedral \\ 
	131& 29 & $\Z_2^2 \rtimes \Q_8$ & $B_{19}$ & $B_{12}$ & $\frac{12}{19}$& $\frac{7}{16}$ &  semidirect product \\ 
	132& 35 & $\Z_4 \rtimes \Q_8$ & $B_{17}$ & $B_{13}$ & $\frac{13}{17}$ & $\frac{7}{16}$ &  semidirect product \\ 
	133& 9 & $\D_4 \rtimes \Z_4$ & $B_{21}$ & $B_{10}$ & $\frac{10}{21}$ & $\frac{7}{16}$ &  semidirect product \\ 
	134& 10 & $\Q_8 \rtimes \Z_4$ & $B_{17}$ & $B_{10}$ & $\frac{10}{17}$ & $\frac{7}{16}$ &  semidirect product \\ \hline
	135& 15 & $\Z_8 \cdot \Z_4$ & $B_{17}$ & $B_{9}$ & $\frac{9}{17}$ & $\frac{7}{16}$ &  non-split extension \\ 
	136& 44 & $\Z_8 \cdot \Z_2^2$ & $B_{19}$ & $B_{10}$ & $\frac{10}{19}$ & $\frac{11}{32}$ &   non-split extension \\ 
	137& 32 & $\Z_4^2 \cdot \Z_2$ & $B_{17}$ & $B_{11}$ & $\frac{11}{17}$ & $\frac{7}{16}$ &   non-split extension \\ 	
	138& 2  & $\Z_2 \cdot \Z_4^2$ & $B_{19}$ & $B_{13}$ & $\frac{13}{19}$ & $\frac{5}{8}$ &   non-split extension \\ 
	139& 7  & $\Z_4 \cdot \D_4$ & $B_{21}$ & $B_{8}$  & $\frac{8}{21}$ & $\frac{11}{32}$ &   non-split extension \\ 
	140& 31 & $\Z_4 \cdot_4 \D_4$ & $B_{21}$ & $B_{11}$ & $\frac{11}{21}$ & $\frac{7}{16}$ &   non-split extension \\ 
	141& 8 & $\Z_4 \cdot_{10} \D_4$ & $B_{17}$ & $B_{8}$ & $\frac{8}{17}$& $\frac{11}{32}$ &   non-split extension \\ 
	142& 13 & $\Z_4 \cdot \Q_8$ & $B_{17}$ & $B_{9}$ & $\frac{9}{17}$ & $\frac{7}{16}$ &   non-split extension \\ 
	143& 30 & $\Z_2^2 \cdot \D_4$ & $B_{21}$ & $B_{11}$ & $\frac{11}{21}$ & $\frac {7}{16}$ &   non-split extension \\ 
	144& 14 & $\Z_2 \cdot \Q_8$ & $B_{17}$ & $B_{11}$ & $\frac{11}{17}$ & $\frac{7}{16}$ &   non-split extension \\ \hline
	\end{tabular}$$
\end{table}

\renewcommand{\arraystretch}{1}
\begin{table}[h!]
	\caption{Bell numbers $B_1,\ldots, B_{32}$}  \label{tabla bells}
	\begin{tabular}{||l|r||l|r||} \hline
$B_1$ & 1 & $B_{17}$ & 82.864.869.804 \\ \hline
$B_2$ & 2 & $B_{18}$ & 682.076.806.159 \\ \hline
$B_3$ & 5 & $B_{19}$ & 5.832.742.205.057 \\ \hline
$B_4$ & 15 & $B_{20}$ & 51.724.158.235.372 \\ \hline
$B_5$ & 52 & $B_{21}$ & 474.869.816.156.751 \\ \hline
$B_6$ & 203 & $B_{22}$ & 4.506.715.738.447.323 \\ \hline
$B_7$ & 877 &$B_{23}$ & 44.152.005.855.084.346 \\ \hline
$B_8$ & 4.140 & $B_{24}$ &445.958.869.294.805.289 \\ \hline
$ B_9$ & 21.147 & $B_{25}$ & 4.638.590.332.229.999.353 \\ \hline
$ B_{10}$ & 115.975 & $B_{26}$ & 49.631.246.523.618.756.274 \\ \hline
$ B_{11}$ &  678.570 & $B_{27}$ & 545.717.047.936.059.989.389 \\ \hline
$ B_{12}$ & 4.213.597 & $B_{28}$ & 6.160.539.404.599.934.652.455 \\ \hline
$ B_{13}$ & 27.644.437 & $B_{29}$ & 71.339.801.938.860.275.191.172 \\ \hline
$ B_{14}$ & 190.899.322 & $B_{30}$ & 846.749.014.511.809.332.450.147 \\ \hline
$ B_{15}$ &  1.382.958.545 & $B_{31}$ & 10.293.358.946.226.376.485.095.653 \\ \hline
 $ B_{16}$ & 10.480.142.147 & $B_{32}$ & 128.064.670.049.908.713.818.925.644 \\ \hline
	\end{tabular}
\end{table}

%\section{Chain metrics and 2-weight metrics}
%We are interested in 2-weight invariant metrics on groups, i.e.\@ the simplest metrics beyond the Hamming metric which is the only invariant metric of weight 1 for a group.
%We will denote by $\widetilde{\mathcal{M}}_2(G)$ the number of 2-weight invariant metrics of $G$.
%
%
%
%\begin{prop}
%Let $(G,d)$ be a metric group and $k=k(G)$ the number defined in \eqref{kG}. Then we have:
%\begin{equation} \label{M2G}
	%\widetilde{\mathcal{M}}_2(G) = \begin{cases} 
	    %\displaystyle \sum_{i=1}^{\lfloor k/2 \rfloor} \binom ki & \qquad \text{if $k$ is odd}, \\[2mm]
	   %\displaystyle \sum_{i=1}^{\frac k2 -1} \binom{k}{i} + \tfrac 12 \binom{k}{\frac k2} & \qquad \text{if $k$ is even}.
	%\end{cases}
%\end{equation}
%\end{prop}
%
%{\gray Me parece que deberia ser $2^{k-1}-1$}
%
%
%
%
%\section{Duality and MacWilliams identities}
%The numbers $A_i=\#P_i$ for $i=0,\ldots,s$ ($A_0=1$) are called the frequencies of the weights. 
%The \textit{spectrum} of the metric group $(G,d)$ is $Spec(G,d)=\{A_0,A_1,\ldots, A_s\}$ and
%the \textit{weight enumerator polynomial} of $(G,d)$ is
%\begin{equation} \label{enumerator}
%W_{G,d}(x) = \sum_{g\in G} X_{w(g)} = \sum_{i=0}^s A_i X_i.
%\end{equation}
%
%
%Agregar enumeradores de peso completo, simetrizados y comentarios de que para Hamming coinciden y para Lee no. 
%
%
%
%\section{Appendix: Tables}

\subsection*{Final remarks}
With the results of Section \ref{S5}, the number of invariant and bi-invariant metrics for several groups of low finite order $n$ can be computed. One needs an explicit presentation of the group $G$ to find the number of elements of order 2 and the table of characters of $G$ (for the information on the conjugacy classes). With the data of the groups given in the webpage GroupNames \cite{GN} one can obtain these numbers for almost all groups of order $n \le 128$.

\end{document}